\documentclass [11pt]{article}
\usepackage{amsfonts}
\usepackage{amsmath,amssymb,amsthm}
\usepackage{graphicx}
\usepackage{bbm}
\usepackage[all]{xy}
\usepackage{rotating}
\usepackage{multirow}
\usepackage{enumerate}
\usepackage[dvipsnames]{xcolor}
\usepackage{tikz}
\usetikzlibrary{arrows,intersections,fadings,calc}
\usepackage{todonotes}
\usepackage[a4paper,textwidth=16cm,textheight=24cm]{geometry}
\usepackage[authoryear,round]{natbib}
\usepackage{authblk}

\newtheorem{theorem}{Theorem}
\newtheorem{remark}{Remark}
\newtheorem{lemma}{Lemma}
\newtheorem{proposition}{Proposition}
\newtheorem{corollary}{Corollary}
\newtheorem{definition}{Definition}
\newtheorem{example}{Example}

\newcommand{\gos}{GOS}

\title{Integral stochastic orders of $m$-generalized order statistics from transform-ordered nonparametric families}

\author[1]{Idir Arab\thanks{idir.bhh@gmail.com, ORCID: 0000-0002-9408-684X}}

\author[2]{Tommaso Lando\thanks{tommaso.lando@unibg.it, ORCID: 0000-0003-4288-0264}}

\author[1]{Paulo Eduardo Oliveira\thanks{Email: paulo@mat.uc.pt, ORCID: 0000-0001-7217-5705}\thanks{P.E.O. acknowledges financial support by the Centre for Mathematics of the University of Coimbra (CMUC, https://doi.org/10.54499/UID/00324/2025)
under the Portuguese Foundation for Science and Technology (FCT),
Grants UID/00324/2025 and UID/PRR/00324/2025.}
}

\author[3]{Tomasz Rychlik\thanks{trychlik@impan.pl, ORCID: 0000-0002-4298-8139}}

\affil[1]{CMUC, Department of Mathematics, University of Coimbra, Portugal}

\affil[2]{Department of Economics, University of Bergamo, Italy}

\affil[3]{Institute of Mathematics, Polish Academy of Sciences, Poland }

\date{}
\begin{document}

\maketitle

\begin{abstract}
We provide sufficient conditions for comparing $m$-generalized order statistics with respect to the increasing concave, increasing convex, and star-shaped stochastic orders. These conditions allow us to rank classical order statistics, 
selected censored type-II order statistics, and records. They depend on both the parameters of the generalized order statistics and the underlying distribution. Rather than assuming a specific parametric form, we adopt a nonparametric approach and 
assume some stochastic transform-ordered property, that is, some suitable shape condition. 
This framework encompasses many relevant classes of distributions that are related, via transform order, to the generalized and the negative generalized Pareto distribution.

\medskip

\noindent Mathematics Subject Classification 2020: 60E15, 62G30, 62G32.

\medskip

\noindent
Keywords and phrases: generalized order statistics, $m$-generalized order statistics, $k$-th record values, integral stochastic order, transform stochastic order.

\end{abstract}

\section{Introduction}

One of the main problems in the theory of stochastic orders is to develop tools for comparing random variables arising from sampling. In general, this category includes any random variable defined as a function of the sample, such as classical statistical functionals including the sample mean and variance, linear combinations, order statistics, and sample spacings. Conditions for comparing such random objects may depend both on the type of function under consideration and the underlying distribution of the sample. However, while the functional form is known, the underlying distribution is typically unknown. 
This motivates the search for conditions on the underlying distribution under which stochastic comparisons can be derived.

In this paper, we focus on the stochastic comparison of generalized order statistics introduced by \cite{Kamps1995}. This model provides a unified framework for a number of notions arising in applied probability, reliability, survival analysis, and statistics, including ordinary order statistics, censored type-II order statistics, and record values.
In this regard, stochastic comparisons of order statistics represent a classical problem, for which many results have been obtained, depending on the stochastic order under consideration and on the assumptions imposed on the underlying distribution; see, among others, \cite{ALO-JAP}, \cite{arnold1991exp}, \cite{kochar2006, kochar2012}, \cite{kochar2009}, \cite{kundu2016}, \cite{lando2021st}, and \cite{wilfling1996c}. These results have natural applications in reliability theory, where an order statistic represents the lifetime of a $k$-out-of-$n$ system with i.i.d.\ components.

While most of the existing literature derives ordering conditions under specific parametric assumptions on the underlying distribution, \cite{lando2021st} and, more recently, \cite{ALO-JAP}, obtain results based on integral stochastic orders (see \cite{muller1997} for the general definition) over broad nonparametric families of distributions defined through suitable transform stochastic orders (see \cite{shaked2007}). As noted by \cite{ALO-JAP}, a suitable choice of the integral order and the corresponding transform order yields explicit comparison conditions for order statistics within the relevant transform-ordered family. In the present paper, we show that a similar idea extends to the more general setting of generalized order statistics. This extension is mathematically nontrivial, since generalized order statistics depend on vectors of parameters whose interaction substantially complicates the analysis.

Integral stochastic orders include the usual stochastic order, the increasing concave order, the increasing convex order, and the star-shaped order (see \cite{shaked2007}). These orders are of particular interest because they allow one to compare random variables simultaneously in terms of magnitude and dispersion. Extending this perspective to generalized order statistics gives rise to several interesting problems, some of which have not yet been addressed. For example, beyond ordinary order statistics, censored type-II order statistics describe the time of the $k$-th failure in a life test that starts with $N$ units and stops after $n$ failures, under a given censoring scheme. Comparing such failure times across different tests, with different values of $N$, $n$, and different censoring schemes, in terms of both size and dispersion, {is naturally} useful in applications. Indeed, such comparisons can help determine under which testing design failures tend to occur later and exhibit greater or smaller variability. Similarly, in the context of record values, one may compare the $k$-th and $j$-th upper records arising from sequences of $n$ and $m$ record observations from the same underlying distribution.

We provide general conditions for establishing integral stochastic orderings of $m$-generalized order statistics, a subclass of generalized order statistics that includes the models mentioned above. Our results strictly contain, as special cases, those of \cite{ALO-JAP} and \cite{lando2021st}. As discussed above, these results apply to transform-ordered families of distributions. Such families include broad and important classes that already play a significant role in reliability, survival analysis, probability, and statistics. In particular, we obtain conditions for families of distributions that are described through a stochastic transform ordering with respect to the generalized Pareto distribution. This includes distributions with monotone density, monotone density on average, increasing or decreasing failure rate (\cite{marshall2007}), decreasing failure rate on average (\cite{marshall2007}), increasing or decreasing odds rate (\cite{oddspaper} or \cite{superpareto2025}), among many others.

\section{Preliminaries and main theorem}

Generalized order statistics (see \cite{Kamps1995}), 
referred to on the sequel as \gos, is a semiparametric probability model based on a vector $(\gamma_1, \ldots, \gamma_n)$ or a sequence $(\gamma_1,\gamma_2, \ldots)$ of arbitrary positive parameters and a one-dimensional baseline distribution function $F$.
This is defined in a two-stage procedure. A vector of random variables $(U_{1,\tilde{\gamma}_1}, \ldots,U_{n,\tilde{\gamma}_n})$  (sequence $(U_{1,\tilde{\gamma}_1},U_{2,\tilde{\gamma}_2},\ldots)$, respectively) is called a vector (sequence, respectively) of uniform generalized order statistics if
it has a joint distribution identical with
\begin{equation}
U_{r,\tilde{\gamma}_r} \stackrel{d}{=}1- \prod_{i=1}^r U_i^{\frac{1}{\gamma_i}}, \qquad r=1,\ldots,n \;
(r=1,2,\ldots,\;\mathrm{respectively}),
\label{1}
\end{equation}
where $U_1, \ldots,U_n$ ($U_1,U_2,\ldots$, respectively) are i.i.d.\
standard uniform random variables. We see that
the distribution of the first $r$ uniform generalized order statistics depends only on the first parameters $\gamma_1,\ldots,
\gamma_r$.
Accordingly, the subscript $(r, \tilde{\gamma}_r)$ in our notation means the number of \gos\ 
and respective vector of significant parameters $\tilde{\gamma}_r=
(\gamma_1, \ldots, \gamma_r)$.
The vector (sequence) of \gos\ 
$(X_{1,\tilde{\gamma}_1}, \ldots,
X_{n,\tilde{\gamma}_n})$  ($(X_{\tilde{\gamma}_1}, X_{2,\tilde{\gamma}_2},\ldots)$, respectively), with
parameters $(\gamma_1, \ldots, \gamma_n)$
($(\gamma_1,\gamma_2, \ldots)$, respectively), and a baseline
distribution function $F$ has the distribution as
\begin{equation}
X_{r,\tilde{\gamma}_r} \stackrel{d}{=}F^{-1}
\left( 1- \prod_{i=1}^r U_i^{\frac{1}{\gamma_i}}\right),
\qquad r=1,\ldots,n \;
(r=1,2,\ldots,\;\mathrm{respectively}).
\label{2}
\end{equation}
According to (\ref{1}) and (\ref{2}), the distribution of the first $r$ \gos\ depends only on the first $r$ parameters and the baseline distribution. Moreover, the marginal distribution of $r$-th \gos\ does not depend on the ordering of these parameters.

The most popular submodel of \gos\ is the vector of standard order statistics based on $n$ i.i.d.\ random variables with a marginal distribution function $F$. These are just \gos\ 
with parameters $\gamma_r= n+1-r$, $r=1,\ldots,n$. Its generalization constitutes the model of progressively censored type-II order statistics $X_{1:n:N}^{\mathbf{R}}, \ldots,X_{n:n:N}^{\mathbf{R}}$ with $N$ objects under experiment, $n \leq N$ actual observations,
and censoring plan $\mathbf{R}=(R_1, \ldots,R_n)$, where $R_i$ is the number of randomly chosen objects removed from the experiment just after $X_{i:n:N}^{\mathbf{R}}$  so that $R_i \geq 0$ and $\sum_{i=1}^n R_i +n =N$. This undergoes the \gos\ model with a finite sequence of parameters
$\gamma_r= \sum_{i=r}^n (R_i+1)$, $r=1,\ldots,n$.
Another classic example of \gos\ 
is the sequence of $k$-th upper records $R_1^{(k)}, R_2^{(k)}, \ldots$ with parameters $\gamma_i=k \in \mathbb{N}$ and a continuous baseline distribution function $F$. 
In particular, for $k=1$, we obtain the classic upper record values.

Marginal distributions of \gos, 
and in particular uniform ones are complicated in general, especially if some parameters (not all) are multiple.
Recursive formulas were presented in \cite{CramerKamps2003}. Explicit expressions appeared in \cite{Rychlik2026}. They simplify much in the case of so called
$m$-generalized order statistics ($m$-\gos) for which the parameters form arithmetic sequences.
We can construct an infinite sequence of $m$-\gos\ if the common  difference is nonnegative. Otherwise, the assumption of positivity of all the gammas allows the construction of only a finite number of $m$-\gos.
The assumption of the $m$-\gos\ model is  satisfied by standard order statistics and $k$-th record values.
The progressively censored type-II order statistics satisfy this submodel if $R_1= \cdots = R_{n-1}$, i.e., if the same number of objects is removed after each failure, except possibly for the last one.
It follows that we can construct a finite number of classic and progressively censored order statistics, and an infinite number of record values.


Following \cite{ALO-JAP}, we consider two families of stochastic orders. Let $\mathcal{H}$ denote a family of nondecreasing functions.
\begin{definition}[\cite{muller1997}]
We say that a random variable $X$ succeeds another random variable $Y$ in $\mathcal{H}$-integral order (denoted as $X \succeq_{\mathcal{H}}^I Y$) if
$\mathbb{E}h(X) \geq \mathbb{E}h(Y)$ for all $h \in \mathcal{H}$ such that the respective expectations exist.
We refer similarly to the order between the respective distribution functions.
\end{definition}

\begin{definition}[\cite{marshall2007}]
We say that a distribution function $F$ succeeds another one $G$ in
$\mathcal{H}$-transform order (shortly $F \succeq_{\mathcal{H}} ^T G$) if $F^{-1} \circ G \in \mathcal{H}$. The same relation will be applied to random variables $X$ and $Y$ having distribution functions $F$ and $G$, respectively.
\end{definition}

The most popular examples of $\mathcal{H}$-integral and
$\mathcal{H}$-transform orders are presented below.
\begin{definition}
Let $F \succeq_{\mathcal{H}}^I G$ for some $\mathcal{H}$.
We say that $F$ succeeds $G$ in
\begin{enumerate}[(i)]
\item the usual stochastic order ($F \succeq_{st} G$) if
$\mathcal{H}$ is the family of nondecreasing functions,

\item the increasing convex order ($F \succeq_{icx} G$) if
$\mathcal{H}$ is the family of nondecreasing convex functions,

\item the increasing concave order ($F \succeq_{icv} G$) if
$\mathcal{H}$ is the family of nondecreasing concave functions,

\item the star-shaped order ($F \succeq_{ss} G$) if
$\mathcal{H}$ is the family of nondecreasing star-shaped functions,

\item the anti-star-shaped order ($F \succeq_{ias} G$) if
$\mathcal{H}$ is the family of nondecreasing
anti-star-shaped functions.
\end{enumerate}
\end{definition}

\begin{definition}
Let $F \succeq_{\mathcal{H}}^T G$ for some $\mathcal{H}$.
We say that $F$ succeeds $G$ in
\begin{enumerate}[(i)]
\item the usual stochastic order ($F \succeq_{st} G$) if
$\mathcal{H}$ is the family of nondecreasing functions not smaller than the identity function,

\item the convex transform order ($F \succeq_{c} G$) if
$\mathcal{H}$ is the family of nondecreasing convex functions,

\item the star order ($F \succeq_{*} G$) if
$\mathcal{H}$ is the family of nondecreasing star-shaped functions,

\item the superadditive order ($F \succeq_{+} G$) if
$\mathcal{H}$ is the family of nondecreasing
superadditive functions.
\end{enumerate}
\end{definition}

The most widely used families of life distributions defined by  the transform order relations with a given distribution function $G$ are the ones related to the standard exponential and standard uniform distributions.
When $G$ is the exponential, the relations $F \preceq_c (\preceq_*,\preceq_+) G$ mean that $F$ has an increasing failure rate  $\lambda_F(x) = \frac{f(x)}{1-F(x)}$ (IFR), increasing failure rate on the average $\Lambda_F(x) = \frac 1x \int_0^x \lambda_F(t)dt$ (IFRA) and new-better-than-used (NBU) property which means that
\begin{equation}
\label{1a}
[1-F(x)][1-F(y)] \geq 1-F(x+y)
\end{equation}
for all $x,y \geq 0$ such that $x+y \leq F^{-1}(1)$.
The reversed relations $F \succeq_c (\succeq_*,  \succeq_+) G$ define the DFR, DFRA and NWU families.  For the standard uniform $G$, the relation $F \preceq_c (\preceq_*, \preceq_+) G$ means that $F$ has increasing density (ID), increasing density on the average (IDA), and subadditive distribution function,
respectively. The reversed inequalities define the DD, DDA, and superadditive distributions. A useful generalization of the exponential and uniform distribution functions are the generalized Pareto distribution functions
\begin{equation}
\label{1b}
W_\alpha(x) = \left\{
\begin{array}{lll}
1-(1-\alpha x)^{\frac{1}{\alpha}}, & x >0, & \alpha <0, \\[1.5ex]
1-\exp (-x), & x>0, & \alpha=0, \\[1.5ex]
1-(1-\alpha x) ^{\frac{1}{\alpha}}, & 0 < x < \frac{1}{\alpha},
& \alpha >0.
\end{array}
\right.
\end{equation}
The function $\lambda_{F,\alpha}=(W_\alpha^{-1}(F))'=f(x) [1-F(x)]^{\alpha-1}$ is called the $\alpha$-generalized failure rate. Monotonicity of $\lambda_{F,\alpha}$ corresponds to $F$ being convex-ordered with respect to $W_\alpha$. Similarly, we consider distributions that dominate $W_\alpha$ in the star order, which corresponds to $\frac1{x}{\lambda_{F,\alpha}(x)}$ being increasing, where the latter function is referred to as the $\alpha$-generalized failure rate average. So the next definitions follow accordingly.
\begin{definition}
We say $F$ has $\alpha$-increasing (decreasing) failure rate ($\alpha$-IGFR and $\alpha$-DGFR, respectively) if $F\preceq_c(\succeq_c) W_\alpha$. Moreover, $F$ has $\alpha$-decreasing failure rate average ($\alpha$-DGFRA) if $F\succeq_* W_\alpha$.
\end{definition}
For $ \alpha =0$ and $1$ we obtain IFR/DFR and ID/DD distributions, respectively. If $\alpha=-1$, we obtain monotone odds rate distributions, denoted as IOR/DOR (\cite{oddspaper}). A similar possible choice of $G$ is the negative generalized Pareto distribution, namely, $\widetilde{W}_\alpha(x)=1-W_\alpha(-x)$, where $-x$ belongs to the support of $W_\alpha$. This models makes it possible to establish ordering conditions for the increasing/decreasing reversed failure rate families of distributions and further generalizations.

Our main theorem generalizes Theorem 3.1 of \cite{ALO-JAP}. The arguments of its proof are similar.
\begin{theorem}\label{t0}
Let $\mathcal{H}$ be a family of nondecreasing functions that is closed under compositions. Let $X_{r,\tilde{\gamma}_r}$ and
$Y_{q,\tilde{\beta}_q}$ denote the $r$-th and $q$-th \gos\
based on parameters $\tilde{\gamma}_r=(\gamma_1,\ldots,\gamma_r$) and $\tilde{\beta}_r=(\beta_1,\ldots,\beta_q)$, respectively, and baseline distribution functions $F$ and $G$, respectively. If $F \succeq_{\mathcal{H}}^T G$ and $Y_{r,\tilde{\gamma}_r}\succeq_{\mathcal{H}}^I Y_{q,\tilde{\beta}_q}$, then $X_{r,\tilde{\gamma}_r}\succeq_{\mathcal{H}}^I X_{q,\tilde{\beta}_q}$.
\end{theorem}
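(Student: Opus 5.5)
Throughout, $X_{\cdot}$ denotes \gos\ with baseline $F$ and $Y_{\cdot}$ those with baseline $G$, both driven by the same uniform \gos\ via (\ref{2}). Write $U_{r,\tilde{\gamma}_r}$ and $U_{q,\tilde{\beta}_q}$ for the uniform \gos\ from (\ref{1}), so that
$$X_{r,\tilde{\gamma}_r}\stackrel{d}{=}F^{-1}(U_{r,\tilde{\gamma}_r}),\qquad Y_{r,\tilde{\gamma}_r}\stackrel{d}{=}G^{-1}(U_{r,\tilde{\gamma}_r}),$$
and similarly for the index $(q,\tilde{\beta}_q)$. The plan is to move from the $G$-baseline to the $F$-baseline through the transform $\phi=F^{-1}\circ G$. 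By the hypothesis $F\succeq^T_{\mathcal H}G$, $\phi\in\mathcal H$.

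The first step is the identity
$$F^{-1}(u)=\phi\bigl(G^{-1}(u)\bigr)\quad\text{for } u\in(0,1),$$
which gives
$$X_{r,\tilde{\gamma}_r}\stackrel{d}{=}\phi(Y_{r,\tilde{\gamma}_r}),\qquad X_{q,\tilde{\beta}_q}\stackrel{d}{=}\phi(Y_{q,\tilde{\beta}_q}).$$
The identity holds when $G$ is continuous, since then $G(G^{-1}(u))=u$. This is the standing situation in the GOS framework; in particular it is needed for the record model, and I would state it as an assumption. Otherwise, one can argue through the quantile representation: $G(G^{-1}(U))$ has the same law as $U$ whenever $G$ is continuous. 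So the main care point is justifying this composition identity, including possible issues at endpoints of the support.

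The second step is the integral comparison. Take any $h\in\mathcal H$ for which the relevant expectations exist. Then
$$\mathbb E\,h(X_{r,\tilde{\gamma}_r})=\mathbb E\,(h\circ\phi)(Y_{r,\tilde{\gamma}_r}).$$
Since $\mathcal H$ is closed under composition, $h\circ\phi\in\mathcal H$. The hypothesis $Y_{r,\tilde{\gamma}_r}\succeq^I_{\mathcal H}Y_{q,\tilde{\beta}_q}$ then yields
$$\mathbb E\,(h\circ\phi)(Y_{r,\tilde{\gamma}_r})\ge\mathbb E\,(h\circ\phi)(Y_{q,\tilde{\beta}_q})=\mathbb E\,h(X_{q,\tilde{\beta}_q}).$$
This is the required inequality.

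A minor point is the existence of expectations. The integral-order hypothesis applies to $h\circ\phi$ only when $\mathbb E(h\circ\phi)(Y_{\cdot})$ exist. These expectations coincide with $\mathbb E h(X_{\cdot})$ by the distributional identity, so whenever the expectations for $X$ exist, those needed for $Y$ exist too. With this, the argument reduces to the single-variable argument of Theorem 3.1 of \cite{ALO-JAP}, with order statistics replaced by uniform \gos.
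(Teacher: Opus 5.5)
Your proposal is correct and is essentially the paper's proof. Both write $X_{\cdot}\stackrel{d}{=}(F^{-1}\circ G)\bigl(G^{-1}(U_{\cdot})\bigr)$ using the same uniform \gos, and apply the integral-order hypothesis to $h\circ F^{-1}\circ G\in\mathcal H$. The continuity of $G$ that you flag (needed for $G\circ G^{-1}=\mathrm{id}$ on $(0,1)$) is used silently in the paper when it writes $F^{-1}=F^{-1}\circ G\circ G^{-1}$, so stating it explicitly is a useful clarification rather than a change of route.
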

\begin{proof}
By definition, $Y_{q,\tilde{\beta}_q} \stackrel{d}{=} G^{-1}(
U_{q,\tilde{\beta}_q})$,
$Y_{r,\tilde{\gamma}_r} \stackrel{d}{=} G^{-1}(
U_{r,\tilde{\gamma}_r})$, and $X_{r,\tilde{\gamma}_r} \stackrel{d}{=}
F^{-1}(U_{r,\tilde{\gamma}_r})= F^{-1}\circ G \circ G^{-1}(
U_{r,\tilde{\gamma}_r})$.
Since $\mathbb{E}h(G^{-1}(U_{q,\tilde{\beta}_q})) \leq
\mathbb{E}h(G^{-1}(U_{r,\tilde{\gamma}_r}))$
for all $h \in \mathcal{H}$
for which the expectations exist, $F^{-1}\circ G \in \mathcal{H}$,
and so $h \circ F^{-1}\circ G \in \mathcal{H}$, we have
\begin{equation}
\label{3}
\mathbb{E}h(X_{q,\tilde{\beta}_q}) =
\mathbb{E}h\circ F^{-1}\circ G( G^{-1}(U_{q,\tilde{\beta}_q}))
\leq \mathbb{E}h\circ F^{-1}\circ G( G^{-1}(U_{r,\tilde{\gamma}_r}))
=\mathbb{E}h(X_{r,\tilde{\gamma}_r}).
\end{equation}
This implies $X_{r,\tilde{\gamma}_r}
\succeq_{\mathcal{H}}^I X_{q,\tilde{\beta}_q}$.
\end{proof}

\section{$m$-Generalized order statistics}
\label{sec:m-gos}

In our particular considerations, we focus on orderings among $m$-\gos. With the aim of simplifying calculations, we adopt a specific notation.
Since we analyze fixed single $m$-\gos\ 
for which ordering of the parameters is irrelevant, we assume that the
arithmetic sequence of ordered parameters $(\gamma_{1:r}, \ldots, \gamma_{r:r})$ is nondecreasing.
Then the distribution of $r$-th $m$-\gos\ depends on the number $r\in\mathbb{N}$, minimal parameter $\gamma_{1:r}>0$, common difference
$\mu=\gamma_{i+1:r}-\gamma_{i:r} \geq 0$ (for $i=1,\ldots,r-1$) and baseline distribution function $F$.
Given a set of parameters $r\geq 1$ and $\gamma,\mu>0$, let us denote
\begin{equation*}
\label{eq:Mprod}
M(r,\gamma,\mu)=\prod_{i=1}^r\bigl(\gamma+(i-1)\mu\bigr).
\end{equation*}
According to Lemma~3.1.2 in \cite{Kamps1995}, 
in the standard uniform case it has the density function
\begin{equation}
\label{2a}
f_{r,\gamma_{1:r},\mu}(x) = 
\frac{M(r,\gamma_{1:r},\mu)}{(r-1)!}
(1-x)^{\gamma_{1:r}-1} g^{r-1}_\mu(x), 
\qquad 0<x<1,
\end{equation}
where
\begin{equation}
\label{2b}
g_\mu(x) = \left\{
\begin{array}{ll}
\frac{1}{\mu} \bigl(1-(1-x)^\mu\bigr), & \mu\neq 0, \\[1.5ex]
-\ln(1-x), & \mu = 0.
\end{array}
\right.
\end{equation}
If the baseline distribution function $F$ has a density function $f$, then the density of the corresponding $m$-\gos\ has the form
$f_{r,\gamma_{1:r},\mu}(F(x))f(x)$.
Our notation and convention differ from the ones applied in literature.  It is commonly assumed that the parameter sequence is nonincreasing,
which is consistent with the constructions of models of order
statistics and records. 
Moreover, the common difference $\mu$ is denoted by $m+1$, and the $n$-th parameter $\gamma_n$ is written as $k$. We introduced these modifications for simplifying our calculations.

Below, we characterize the sign variation of the difference between density functions of uniform $m$-\gos.
%
These characterizations provide the background results that will be used in the sequel to prove integral order relations for $m$-\gos\ from restricted families of baseline distributions. 
First, for completeness, we recall a generalized Descartes rule of signs.
\begin{lemma}[\cite{Komornik2006}]
\label{gen-Descartes}
Let $p(x) = a_0x^{b_0} + a_1 x^{b_1} + \cdots + a_n x^{b_n}$ be a function with nonzero real coefﬁcients $a_0,\ldots,a_n$ and real exponents $b_0<\cdots< b_n$.
Then $p$ cannot have more positive roots (even counted with multiplicity) than the number of sign changes in the sequence $a_0,\ldots,a_n$.
\end{lemma}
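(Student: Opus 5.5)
The statement to be proved is the generalized Descartes rule of signs (Lemma~\ref{gen-Descartes}). I would argue by induction on the number of sign changes $V$ in $a_0,\ldots,a_n$, using Rolle's theorem. Let $Z(p)$ be the number of positive roots of $p$ counted with multiplicity; we want $Z(p)\le V(p)$. Multiplying $p$ by $x^{-c}$ with $c>0$ does not change positive roots or their multiplicities, nor the coefficient sequence. So I may shift all exponents by any real constant.

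\emph{Base case $V=0$.} All $a_i$ have the same sign, so $p(x)\neq 0$ for every $x>0$, and $Z(p)=0$.

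\emph{Inductive step.} Suppose $V\ge 1$, and choose an index $j$ with $a_{j-1}a_j<0$. Set $q(x)=x^{-c}p(x)$ with $b_{j-1}<c<b_j$. Then
\[
q'(x)=\sum_{i=0}^n a_i(b_i-c)x^{b_i-c-1}.
\]
The factor $b_i-c$ is negative for $i\le j-1$ and positive for $i\ge j$. This flips the signs of $a_0,\ldots,a_{j-1}$ and keeps the rest, so the sign change between positions $j-1$ and $j$ disappears while every other change is preserved. All coefficients of $q'$ are nonzero and its exponents $b_i-c-1$ are still strictly increasing, so $q'$ is of the same form with $V(q')=V-1$. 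By induction, $Z(q')\le V-1$.

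\emph{Rolle step.} It remains to show $Z(q')\ge Z(q)-1$, with multiplicities counted. Let the distinct positive roots of $q$ be $x_1<\cdots<x_s$ with multiplicities $m_1,\ldots,m_s$, so $\sum m_k=Z(q)$. Each $x_k$ is a root of $q'$ of multiplicity $m_k-1$. Rolle's theorem gives a further root of $q'$ strictly inside each interval $(x_k,x_{k+1})$, contributing $s-1$ new roots. Hence
\[
Z(q')\ge \sum_k (m_k-1)+(s-1)=Z(q)-1.
\]
Therefore $Z(p)=Z(q)\le Z(q')+1\le V$. The case where $p$ has infinitely many positive roots cannot occur: generalized polynomials are real-analytic on $(0,\infty)$ and not identically zero, because distinct exponents give linearly independent functions. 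This finiteness follows from the same induction once it is applied to bounded root counts.

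The main obstacle is the bookkeeping of multiplicities in the Rolle step. Multiplicity must be defined through vanishing derivatives, which is valid because the functions are analytic. With that definition, the root of $q$ of multiplicity $m_k$ is exactly a root of $q'$ of multiplicity $m_k-1$, and the count above goes through. Everything else is routine.
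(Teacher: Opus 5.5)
Your proof is correct and is essentially the standard short argument from the cited note of Komornik; the paper gives no proof of its own, only that citation. The argument is induction on the number of sign changes: multiply by $x^{-c}$ with $c$ strictly between the exponents at one sign change, so the derivative loses exactly that change, then apply Rolle's theorem counting multiplicities. One small correction: real-analyticity together with $p\not\equiv 0$ does not by itself rule out infinitely many positive roots, since they could accumulate at $0$ or $\infty$. Finiteness instead follows from your ``at least $N$ roots'' version of the induction, or from the fact that $a_0x^{b_0}$ dominates near $0$ and $a_nx^{b_n}$ dominates near $\infty$.
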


A first characterization of sign variation when comparing two $r$-th $m$-\gos\ with different parameters.
{\samepage
\begin{lemma}\label{t1}
Let $f_{r,\gamma_{1:r},\mu}$ and $f_{r,\beta_{1:r},\nu}$ denote the density functions of the $r$-th uniform $m$-\gos\, respectively, with the minimal parameters $\gamma_{1:r} > \beta_{1:r}>0$, respectively, and common differences $\mu, \nu \geq 0$, respectively. Consider the following conditions:
    \begin{enumerate}
    \item\label{cc1}
    $\mu, \nu >0$ and $\frac{M(r,\gamma_{1:r},\mu)}{M(r,\beta_{1:r},\nu)}\geq 1$, 
    
    \item\label{cc2}
    $\mu, \nu >0$ and $\frac{M(r,\gamma_{1:r},\mu)}{M(r,\beta_{1:r},\nu)} < 1$, 
    
    \item\label{cc3}
    $\mu=0 <\nu$ and either 
        \begin{enumerate}
        \item
        $\beta_{r:r}\leq\gamma_{1:r}$,
        \item
        $\beta_{r:r}= \beta_{1:r} + (r-1)\nu > \gamma_{1:r}$ together with $\frac{M(r,\gamma_{1:r},0)}{M(r,\beta_{1:r},\nu)}\geq 1$,
        \end{enumerate}
    
    \item\label{cc4}
    $\mu=0 <\nu$ and $\beta_{r:r}>\gamma_{1:r}$ and  $\frac{M(r,\gamma_{1:r},0)}{M(r,\beta_{1:r},\nu)} < 1$,
                    
    \item\label{cc5}
    $\nu=0\leq\mu$.
    \end{enumerate}
    \begin{enumerate}
    \item[1.]
    If either of the conditions (i), (iii) or (v) 
    holds then the difference $f_{r,\gamma_{1:r},\mu}-f_{r,\beta_{1:r},\nu}$ has sign variation $+-$ on $(0,1)$.
    \item[2.]
    If either of the conditions (ii) or (iv) 
    holds then the difference $f_{r,\gamma_{1:r},\mu}-f_{r,\beta_{1:r},\nu}$ has sign variation $-+-$ on $(0,1)$.
    \end{enumerate}
\end{lemma}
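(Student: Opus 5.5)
We reduce everything to the sign of the ratio of densities: it suffices to study the sign of $\rho(x)-1$, where
\[
\rho(x)=\frac{f_{r,\gamma_{1:r},\mu}(x)}{f_{r,\beta_{1:r},\nu}(x)}=\frac{M(r,\gamma_{1:r},\mu)}{M(r,\beta_{1:r},\nu)}\,(1-x)^{\gamma_{1:r}-\beta_{1:r}}\left(\frac{g_\mu(x)}{g_\nu(x)}\right)^{r-1},
\]
on $(0,1)$. Substituting $t=1-x\in(0,1)$, the sign of $\rho-1$ equals the sign of $\log\rho$. It also equals the sign of the $(r-1)$-th root version, namely the sign of
\[
\phi(t)=c\,t^{\delta}\,g_\mu-g_\nu,
\]
where $\delta=(\gamma_{1:r}-\beta_{1:r})/(r-1)>0$ and $c=(M_\gamma/M_\beta)^{1/(r-1)}$. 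The case $r=1$ is trivial: the difference is $\gamma t^{\gamma-1}-\beta t^{\beta-1}$, which is $+$ near $t=0$ (i.e.\ $x$ near $1$)… In fact, we read the sign variation in $x$, so we must be careful with the orientation. As $x\to1$ ($t\to0$) the factor $t^{\gamma_{1:r}-\beta_{1:r}}\to0$ while $g_\mu/g_\nu$ stays bounded when $\mu,\nu>0$, so $\rho\to0$ and the difference is negative near $x=1$. As $x\to0$ we have $g_\mu(x)\sim x\sim g_\nu(x)$, so $\rho(0^+)=M_\gamma/M_\beta$. Hence the end signs are determined: near $0$ the sign is $+$ if the ratio exceeds $1$ (with a refinement needed when it equals $1$) and $-$ if it is below $1$, while near $1$ the sign is always $-$ whenever $\rho\to0$. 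If $\mu=0$, then $g_0=-\ln t\to\infty$, but only logarithmically, so it is still beaten by $t^{\gamma-\beta}$, giving $\rho\to0$. If $\nu=0$, then $g_\nu\to\infty$ in the denominator, which again gives $\rho\to0$.

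The core of the argument is to bound the number of sign changes. For $\mu,\nu>0$, write $\phi(t)=\frac{c}{\mu}(t^\delta-t^{\delta+\mu})-\frac1\nu(1-t^\nu)$. This is a generalized polynomial with at most four terms, with exponents $0,\nu,\delta,\delta+\mu$ (in some order) and coefficients $-\frac1\nu,+\frac1\nu,+\frac c\mu,-\frac c\mu$. By Lemma~\ref{gen-Descartes} the number of positive roots is at most the number of coefficient sign changes. Sorting the exponents, $0$ is the smallest and $\delta+\mu$ exceeds $\delta$, so the possible orderings give at most $3$ sign changes. Combining this with the boundary behaviour, namely that $\phi$ vanishes at $t=1$ (a known root at the endpoint, which must be discounted using multiplicity or by dividing by $(1-t)$), leaves at most $2$ interior changes. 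Case (i), with ends $+$ and $-$, then forces exactly one change, giving $+-$; case (ii), with ends $-$ and $-$, forces either zero or two changes. To exclude zero changes, note that $\int_0^1 (f_\gamma-f_\beta)=0$, so the difference cannot be nonpositive throughout, and hence the pattern is $-+-$. The root at $t=1$ is the delicate step. I would handle it by differentiating: rather than treating $\phi$ directly, apply Descartes to $\phi'(t)$ or to $t^{-\delta}\phi$ with one derivative taken (a Rolle-type reduction). This removes one term and bounds the number of critical points.

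For the cases involving logarithms, $\mu=0$ or $\nu=0$, Descartes does not apply directly. Take $\log\rho$ as a function of $t$ and differentiate; the logarithm then turns into rational-power expressions. For example, when $\nu=0\le\mu$, consider $\psi(t)=\log c+\delta\log t+\log g_\mu(t)-\log(-\log t)$. Its derivative $\psi'$, after multiplication by positive factors, has a sign governed by an expression to which one more differentiation (removing $\log t$) brings Descartes into play. I would show that $\psi$ is unimodal or monotone, so that together with the end signs ($\rho(0^+)=M_\gamma/M_\beta$ in $x$, and $-\infty$ at the other end) and $\int(f_\gamma-f_\beta)=0$ there is exactly one change $+-$. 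The conditions in (iii)--(iv) comparing $\beta_{r:r}$ with $\gamma_{1:r}$ should emerge exactly here: when $\beta_{r:r}\le\gamma_{1:r}$, the derivative of $\log\rho$ is single-signed near one end, which yields $+-$ regardless of the ratio $M_\gamma/M_\beta$. I expect the main obstacle to be this root counting in the mixed logarithmic cases and the equality case $M_\gamma=M_\beta$, which requires a second-order expansion at $x=0$.
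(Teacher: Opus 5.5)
Your argument for $\mu,\nu>0$ is sound and follows the paper. The paper also studies the $(r-1)$-th root difference $h_1(y)=c\,y^{\delta}(1-y^{\mu})/\mu-(1-y^{\nu})/\nu$ (your $\phi$) using Lemma~\ref{gen-Descartes}, the values $h_1(0)=-1/\nu$, $h_1(1)=0$, $h_1'(1)=1-c$, and the vanishing integral. Your uniform parity count is cleaner than the paper's split over orderings of $0,\delta,\delta+\mu,\nu$: at most three positive roots, one of them $t=1$, with end signs fixed by $M(r,\gamma_{1:r},\mu)/M(r,\beta_{1:r},\nu)$. The root at $t=1$ is not delicate, since Lemma~\ref{gen-Descartes} counts it with multiplicity. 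The equality case needs no second-order expansion either. If $c=1$ then $\phi(1)=\phi'(1)=0$, so at most one further root remains. Since $\phi(0^+)<0$ and the zero integral forces $\phi>0$ somewhere in $(0,1)$, the pattern is $-+$ in $t$, i.e.\ $+-$ in $x$. (Also, for $r=1$ the difference is negative, not positive, near $t=0$.)

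The genuine gap is the logarithmic cases (iii)--(v): you give no actual root count there, and the mechanism you sketch fails. For $\nu=0<\mu$, multiplying $\psi'$ by $t(1-t^{\mu})(-\ln t)>0$ gives
\[
1-t^{\mu}+\ln t\,\bigl[(\delta+\mu)t^{\mu}-\delta\bigr].
\]
Here $\ln t$ carries a non-constant coefficient, so a further differentiation does not remove it, and Lemma~\ref{gen-Descartes} still does not apply. Single-signedness of $\psi'$ near one end gives no bound on the number of sign changes. Cases (iii)(b) and (iv), where two changes must be controlled, are not treated at all.

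The missing idea, which is what the paper does with its $h_3$ and $h_2$, is to normalize so that $\ln t$ enters additively with a constant coefficient. For $\nu=0$ this is your own $\phi(t)=c\,t^{\delta}(1-t^{\mu})/\mu+\ln t$. Its derivative $t^{-1}+\frac{c\delta}{\mu}t^{\delta-1}-\frac{c(\delta+\mu)}{\mu}t^{\delta+\mu-1}$ has one sign change. For $\mu=0$, divide $\phi$ by $c\,t^{\delta}$ to get $h_2(t)=-\ln t-c^{-1}t^{-\delta}(1-t^{\nu})/\nu$, with
\[
h_2'(t)=-t^{-1}+\frac{\delta}{c\nu}\,t^{-\delta-1}+\frac{\nu-\delta}{c\nu}\,t^{\nu-\delta-1}.
\]
This has at most two sign changes, and only one when $\nu\le\delta$, i.e.\ $\beta_{r:r}\le\gamma_{1:r}$; that is where this condition enters. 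By Rolle these functions have at most three zeros on $(0,\infty)$, counted with multiplicity, one of them at $t=1$. Using $h(0^+)=-\infty$, $\phi'(1)=1-c$ and $h_2'(1)=c^{-1}-1$, your parity-plus-integral argument then settles every case, including (iv).

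Alternatively, your $\log\rho$ idea can be rescued in the variable $s=-\ln t$. Let $\ell(z)=\frac1z-\frac1{e^z-1}$, which decreases from $\frac12$ to $0$. Then $\frac{d}{ds}\frac{\ln\rho}{r-1}=-\delta+\nu\,\ell(\nu s)$ for $\mu=0<\nu$, and $=-\delta-\mu\,\ell(\mu s)<0$ for $\nu=0<\mu$. Hence $\ln\rho$ is monotone or unimodal in $s$, and $\rho-1$ has at most two sign changes.
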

}
\begin{proof}
Note that (i) 
and (ii) 
are complementary assumptions on $\mu$ and $\nu$, and the same happens with (iii) 
and (iv), 
so these pairs of cases will be handled simultaneously.
\begin{enumerate}
\item[(a)]
In the cases (i) and (ii) 
both $\mu$  and $\nu$ are positive. We may write
\begin{eqnarray}
\lefteqn{f_{r,\gamma_{1:r},\mu}(x) - f_{r,\beta_{1:r},\nu}(x)} \nonumber \\
 & = &
\frac{M(r,\beta_{1:r},\nu)}{(r-1)!}(1-x)^{\beta_{1:r}-1} \nonumber \\
 & & \times
 \left[\frac{M(r,\gamma_{1:r},\mu)}{M(r,\beta_{1:r},\nu)}
  (1-x)^{\gamma_{1:r}-\beta_{1:r}}
\! \left( \frac{1-(1\!-\!x)^\mu}{\mu}\right)^{r-1}
\! - \left( \frac{1-(1-x)^\nu}{\nu}\right)^{r-1}\right].\qquad 
\label{4}
\end{eqnarray}
The expression in front of the square brackets is positive. The one in the brackets is the difference of two positive terms, and it determines the sign of the whole difference.
We consider the difference of $(r-1)$-th roots of these terms, replacing the original variable $x$ by $y=1-x$, and get
\begin{eqnarray}
h_1(y) & = & 
  \left(\frac{M(r,\gamma_{1:r},\mu)}{M(r,\beta_{1:r},\nu)}\right)^{\frac1{r-1}}
y^{\frac{\gamma_{1:r}-\beta_{1:r}}{r-1}}
\frac{1-y^\mu}{\mu} - \frac{1-y^\nu}{\nu} \nonumber \\
& = & -\frac{1}{\nu}y^0 + \frac{1}{\mu}
 \left(\frac{M(r,\gamma_{1:r},\mu)}{M(r,\beta_{1:r},\nu)}\right)^{\frac1{r-1}}
y^{\frac{\gamma_{1:r}-\beta_{1:r}}{r-1}} \nonumber \\
& & - \frac{1}{\mu} \left(\frac{M(r,\gamma_{1:r},\mu)}{M(r,\beta_{1:r},\nu)}\right)^{\frac1{r-1}}
y^{\frac{\gamma_{1:r}-\beta_{1:r}}{r-1}+\mu} + \frac{1}{\nu}y^\nu.
\label{5}
\end{eqnarray}
Note that $h_1(0) = -\frac{1}{\nu}<0$ and $h_1(1)=0$.
Moreover, easy calculations show that
\begin{equation*}
\label{6}
h'_1(1) = - \left(\frac{M(r,\gamma_{1:r},\mu)}{M(r,\beta_{1:r},\nu)}\right)^{\frac1{r-1}}
 +1,
\end{equation*}
which implies that $h'_1(1) >0 $ and $h_1(1-)<0$ if $\frac{M(r,\gamma_{1:r},\mu)}{M(r,\beta_{1:r},\nu)}<1$,
whereas $h'_1(1) <0 $ and $h_1(1-)>0$ if 
$\frac{M(r,\gamma_{1:r},\mu)}{M(r,\beta_{1:r},\nu)}>1$
Therefore, we need to separate the two cases from now on. 

Before proceeding, note that the exponents in the first three terms of (\ref{5}) are ordered: $0< \frac{\gamma_{1:r}-\beta_{1:r}}{r-1} < \frac{\gamma_{1:r}-\beta_{1:r}}{r-1}+\mu$. Concerning the last exponent $\nu>0$ there are three possible ways to order them:  $0 < \nu < \frac{\gamma_{1:r}-\beta_{1:r}}{r-1}$ or $\frac{\gamma_{1:r}-\beta_{1:r}}{r-1} < \nu <\frac{\gamma_{1:r}-\beta_{1:r}}{r-1} +\mu $ or $\nu > \frac{\gamma_{1:r}-\beta_{1:r}}{r-1} +\mu$.
    \begin{itemize}
   \item
   The case (ii): 
   We are assuming that $\mu, \nu >0$ and $\frac{M(r,\gamma_{1:r},\mu)}{M(r,\beta_{1:r},\nu)}<1$.
    The assumption $\nu \leq \frac{\gamma_{1:r}-\beta_{1:r}}{r-1}+\mu$, i.e. $\beta_{r:r} \leq \gamma_{r:r}$ together with $\gamma_{1:r} > \beta_{1:r}$ contradicts $\frac{M(r,\gamma_{1:r},\mu)}{M(r,\beta_{1:r},\nu)}<1$
    So the only possible order of the exponents in (\ref{5}) is $\nu > \frac{\gamma_{1:r}-\beta_{1:r}}{r-1} +\mu$. 
    In this case the sign sequence of the coefficients is $-+-+$. Since $h_1(1-)<0$, the function (\ref{5}) has sign variation either $-+-$ or $-$ on $(0,1)$ 
    The latter case is impossible because the integral of the difference of density functions (\ref{4}) over $(0,1)$ is equal to $0$, so the sign variation is $-+-$. Finally, note that this sign variation is not affected when we raise both terms in the first line of (\ref{5}) to the power $r-1$ and replace $y$ by $1-x$. Therefore, for this case, the sign variation of (\ref{4}) is $-+-$. 
    \item
    The case (i): 
    We first assume that  $\mu,\nu >0$ and $\frac{M(r,\gamma_{1:r},\mu)}{M(r,\beta_{1:r},\nu)}>1$
    which means $h(1-)>0=h_1(1)$.
    If $\nu <  \frac{\gamma_{1:r}-\beta_{1:r}}{r-1} +\mu$, then (\ref{5}) is 
    $-+-$ in $\mathbb{R}_+$ and $-+$ in $(0,1)$,  and (\ref{4}) is $+-$ there.
    If $\nu >  \frac{\gamma_{1:r}-\beta_{1:r}}{r-1} +\mu$, then (\ref{5}) is either 
    $-+$ or $-+-+$ in $\mathbb{R}_+$ by the rule of signs, and it is so possibly in $(0,1)$. 
    However, $h_1(1+)<0$ contradicts the latter 
    case. 
    Accordingly, (\ref{4}) is $+-$ on $(0,1)$.
    Suppose finally that $\nu =  \frac{\gamma_{1:r}-\beta_{1:r}}{r-1} +\mu$. The sign sequence of the coefficients in (\ref{5}) is either $-+-$ or $-+$.  
    The case $-+$ is excluded because $h_1(1)= 0 < h_1(1+)$ forces $h_1(y)<0$ for all $0<y<1$, which is impossible. 
    Similarly, negativity of $h_1$ on the whole 
    positive half-axis cannot hold. The only possibility is that 
    the function is $-+-$ on $\mathbb{R}_+$, and $-+$ on $(0,1)$. Function (\ref{4}) has the reversed order of signs on the unit interval.   
    
    \smallskip
    
    Suppose now that $\frac{M(r,\gamma_{1:r},\mu)}{M(r,\beta_{1:r},\nu)}=1$. 
    Since $\gamma_{1:r} >\beta_{1:r}$, this implies that $\beta_{r:r} >
    \gamma_{r:r}$ and $\nu > \frac{\gamma_{1:r}-\beta_{1:r}}{r-1} +\mu$. 
    By the rule of signs, $h_1$ has sign variation in $\mathbb{R}_+$ either $-+$ or $-+-+$. When 
    $\frac{M(r,\gamma_{1:r},\mu)}{M(r,\beta_{1:r},\nu)}=1$, 
    then 
    \[
    h''_1(1) = -2  \frac{\gamma_{1:r}-\beta_{1:r}}{r-1} -\mu +\nu 
    =\frac{2}{r-1} \left( \frac{\beta_{1:r}+\beta_{r:r}}{2}-
    \frac{\gamma_{1:r}+\gamma_{r:r}}{2}\right).
    \]
    Consider increasing arithmetic vectors $(\alpha_{1:r},\ldots,
    \alpha_{r:r})$ such that the product of its positive coordinates has a fixed value. 
    One can see that increasing the common difference decreases the mean $\frac{\alpha_{1:r}+\alpha_{r:r}}{2}$. 
    Since $\nu >\mu$, we have $\frac{\beta_{1:r}+\beta_{r:r}}{2} < \frac{\gamma_{1:r}+\gamma_{r:r}}{2}$, and $h''_1(1)<0$ in 
    consequence. It means that both $-+$ and $-+-+$ are possible sign variations of $h_1$ in $(0,1)$. 
    Suppose that the latter holds. It follows that (\ref{5}) is positive on some $(a,b)$ with $0<a<b<1$. 
    Slightly changing the parameters of $m$-\gos\ we violate the assumption $\frac{M(r,\gamma_{1:r},\mu)}{M(r,\beta_{1:r},\nu)}=1$ 
    and notice that $h_1$ is positive on some $(a',b')$ for some $0<a'<b'<1$. 
    This contradicts the hitherto established conclusions. 
    Therefore the sign variation of (\ref{5}) is $-+$ on the unit interval. This gives the desired claim in the case (i). 
    \end{itemize}
\item[(b)]
    The cases (iii) and (iv) 
    with $\mu=0 <\nu$: We now have
    \begin{eqnarray}
    \lefteqn{f_{r,\gamma_{1:r},0}(x) - f_{r,\beta_{1:r},\nu}(x) =
    \frac{\gamma_{1:r}^r}{(r-1)!}(1-x)^{\gamma_1-1} } \nonumber \\
    & & \times \left[\Bigl( -\ln (1-x)\Bigr)^{r-1}
    - \! \frac{M(r,\beta_{1:r},\nu)}{M(r,\gamma_{1:r},0)}
    (1-x)^{\beta_{1:r}-\gamma_{1:r}}\!\left( \frac{1\!-\!(1\!-\!x)^\nu}{\nu}\right)^{r-1}\right].\qquad
    \label{10}
    \end{eqnarray}
Replacing $y=1-x$ in (\ref{10}), we consider the sign variation on $(0,1)$ of
\begin{equation}
\label{11}
h_2(y) \!=\!  -\ln y- \frac{1}{\nu} 
    \left(\frac{M(r,\beta_{1:r},\nu)}{M(r,\gamma_{1:r},0)}\right)^{\frac1{r-1}}
    y^{-\frac{\gamma_{1:r}-\beta_{1:r}}{r-1}}
     + \frac{1}{\nu} 
    \left(\frac{M(r,\beta_{1:r},\nu)}{M(r,\gamma_{1:r},0)}\right)^{\frac1{r-1}}
    y^{-\frac{\gamma_{1:r}-\beta_{1:r}}{r-1}+\nu}.
\end{equation}
%
We have $h_2(0)=-\infty$ and $h_2(1)=0$. According to Lemma~\ref{gen-Descartes}, the sign variation of the derivative
    \begin{eqnarray}
    h'_2(y) & = &  \frac{1}{\nu} \frac{\gamma_{1:r}-\beta_{1:r}}{r-1}
    \left(\frac{M(r,\beta_{1:r},\nu)}{M(r,\gamma_{1:r},0)}\right)^{\frac1{r-1}}
    y^{-\frac{\gamma_{1:r}-\beta_{1:r}}{r-1}-1} -y^{-1} \nonumber \\
    & & + \frac{1}{\nu} \left[- \frac{\gamma_{1:r}-\beta_{1:r}}{r-1}+\nu \right]
    \left(\frac{M(r,\beta_{1:r},\nu)}{M(r,\gamma_{1:r},0)}\right)^{\frac1{r-1}}
    y^{-\frac{\gamma_{1:r}-\beta_{1:r}}{r-1}+\nu-1}\quad
    \label{12}
    \end{eqnarray}
    on $\mathbb{R}_+$ is either $+-$ when $-\frac{\gamma_{1:r}-\beta_{1:r}}{r-1}+\nu\leq0$ (note that, in the case of equality the last term in (\ref{12}) does not exist, so the sign variation is the same as for the inequality) or either of $+$ and $+-+$ when
    $-\frac{\gamma_{1:r}-\beta_{1:r}}{r-1}+\nu>0$, which is equivalent to $\beta_{r:r}>\gamma_{1:r}$.
       \begin{itemize}
        \item
        The case (iii): 
        If $-\frac{\gamma_{1:r}-\beta_{1:r}}{r-1}+\nu\leq0$, it follows that $h'_2(1)<0$, hence the sign variation in $(0,1)$ of (\ref{11}) is $-+$, and, due to replacing $y=1-x$, (\ref{10}) is $+-$ on the unit interval. 
        
        It remains to verify the case $\prod_{i=1}^r \frac{\beta_{1:r}+(i-1)\nu}{\gamma_{1:r}} =1$ which implies $h'_2(1)=0$.
        It is impossible that $h'_2$ is positive on $\mathbb{R}_+$, hence its sign variation on $\mathbb{R}_+$ is $+-+$. Therefore, $h_2$ first has a unique local maximum and then a unique local minimum in $\mathbb{R}_+$. Since $h_2(1)= h'_2(1)=0$, the function has a local extremum at $y=1$. If this is a local maximum, then $h_2$ is increasing and negative on $(0,1)$, which is impossible. If it is a local minimum, then $h_2$ is increasing-decreasing, and first negative and then positive in $(0,1)$. This means that (\ref{10}) has sign variation $+-$ on this interval.

        \item
        The case (iv): 
        Assume now that $0 < -\frac{\gamma_{1:r}-\beta_{1:r}}{r-1}+\nu $, which is equivalent to $\beta_{1:r}> \gamma_{1:r}$ as mentioned above. We may compute
        \begin{equation*}
        h'_2(1) =  \left(\frac{M(r,\beta_{1:r},\nu)}{M(r,\gamma_{1:r},0)}\right)^{\frac1{r-1}}
        -1.
        \label{13}
        \end{equation*}
        If $\frac{M(r,\beta_{1:r},\nu)}{M(r,\gamma_{1:r},0)}>1$,
        i.e., $h'_2(1)>0$ and $h_2(1-) <0$, the function in (\ref{11}) is $-+-$ in $(0,1)$ and so is (\ref{10}) there (negativity of these functions on $(0,1)$ is impossible).     
        \end{itemize}

\item[(c)]
    The case (v): 
    Now we consider the case $\nu=0 <\mu$. We have
    \begin{eqnarray}
    \lefteqn{f_{r,\gamma_{1:r},\mu}(x) \!-f_{r,\beta_{1:r},0}(x)  =
    \frac{M(r,\beta_{1:r},\nu)}{(r-1)!}
    (1-x)^{\beta_{1:r}-1} } \nonumber \\
     & & \times\!
     \left[ \frac{M(r,\gamma_{1:r},\mu)}{M(r,\beta_{1:r},\nu)}
     (1-x)^{\gamma_{1:r}-
    \beta_{1:r}}\!\left( \frac{1\!-\!(1\!-\!x)^\mu}{\mu}\right)^{r-1}
    \!-\! \left( -\ln (1\!-\!x)\right)^{r-1}\! \right]\!.\;\;
    \label{7}
    \end{eqnarray}
    For checking the sign of (\ref{7}) we analyze the sign of
    \begin{equation}
    h_3(y) = \left(\frac{M(r,\gamma_{1:r},\mu)}{M(r,\beta_{1:r},\nu)}\right)^{\frac1{r-1}}
    y^{\frac{\gamma_{1:r}-\beta_{1:r}}{r-1}}
    \frac{1-y^\mu}{\mu} + \ln y .
    \label{8}
    \end{equation}
    Observe that $h_3(0)=-\infty$ and $h_3(1)=0$, and
    \begin{eqnarray}
    h'_3(y) & = & y^{-1} +\frac{1}{\mu}\frac{\gamma_{1:r}-\beta_{1:r}}{r-1}
    \left(\frac{M(r,\gamma_{1:r},\mu)}{M(r,\beta_{1:r},\nu)}\right)^{\frac1{r-1}}
    y^{\frac{\gamma_{1:r}-\beta_{1:r}}{r-1}-1} \nonumber \\
    & & -
    \frac{1}{\mu}\left(\frac{\gamma_{1:r}-\beta_{1:r}}{r-1}+\mu\right)
    \left(\frac{M(r,\gamma_{1:r},\mu)}{M(r,\beta_{1:r},\nu)}\right)^{\frac1{r-1}}
    y^{\frac{\gamma_{1:r}-\beta_{1:r}}{r-1}+\mu -1} .
    \label{9}
    \end{eqnarray}
    Taking into account Lemma~\ref{gen-Descartes}, (\ref{9}) has sign variation $+-$ on $\mathbb{R}_+$, hence (\ref{8}) is first increasing and then decreasing there.
    By the border conditions, it is either increasing and negative on $(0,1)$, or it is increasing negative and positive and ultimately decreasing positive there.
    We conclude that (\ref{7}) is either negative or negative-positive on $(0,1)$.
    The first possibility is excluded, because the integral of (\ref{7}) over $(0,1)$ is equal to $0$. This means that (\ref{7}) is $+-$ on $(0,1)$.
    
    \smallskip
    
    To complete the proof it remains to verify the case $\mu=\nu=0$, where we have
    \begin{equation}
    f_{r,\gamma_{1:r},0}(x) - f_{r,\beta_{1:r},0}(x)  
    = 
    \frac{\beta^r_{1:r}}{(r\!-\!1)!}(1-x)^{\beta_{1:r}-1}(-\ln (1-x))^{r-1}
    \left[ \frac{\gamma^r_{1:r}}{\beta^r_{1:r}}(1\!-\!x)^{\gamma_{1:r}-\beta_{1:r}}-1
    \right].
    \label{14}
    \end{equation}
    The factor in front of the expression in the square brackets is positive, while the one inside the brackets is decreasing from
    $\frac{\gamma^r_{1:r}}{\beta^r_{1:r}}-1 >0$ at $0$ to $-1$ at $1$, which means that (\ref{14}) changes the sign once from $+$ to $-$
    on $(0,1)$.
\end{enumerate}
\end{proof}
We now study the sign variation of the difference between the densities of two $m$-\gos\ with different orders and the same common difference.
\begin{lemma}\label{t11}
Let $f_{r,\gamma_{1:r},\mu}$ and $f_{q,\beta_{1:q},\mu}$ denote the density functions of the $r$-th  and $q$-th, $r \neq q$, uniform $m$-\gos, respectively,  
with the minimal parameters $\gamma_{1:r} > \beta_{1:r}>0$, respectively, and identical common differences $\mu \geq 0$. 
If $r>q$ then the difference $f_{r,\gamma_{1:r}, \mu} - f_{q, \beta_{1:q}, \mu}$ has the sign variation $-+-$ in $(0,1)$. If $r<q$, then this difference has sign variation $+-$ in $(0,1)$.
\end{lemma}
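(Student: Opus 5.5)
The plan is to work with the logarithm of the ratio of the two densities rather than with their difference. By (\ref{2a}), for $x\in(0,1)$,
\[
\frac{f_{r,\gamma_{1:r},\mu}(x)}{f_{q,\beta_{1:q},\mu}(x)}
=C\,(1-x)^{\gamma_{1:r}-\beta_{1:q}}\,g_\mu^{\,r-q}(x),
\qquad C=\frac{M(r,\gamma_{1:r},\mu)\,(q-1)!}{M(q,\beta_{1:q},\mu)\,(r-1)!}>0 .
\]
The sign of $f_{r,\gamma_{1:r},\mu}-f_{q,\beta_{1:q},\mu}$ is therefore the sign of
\[
\psi(y)=\ln C+(\gamma_{1:r}-\beta_{1:q})\ln y+(r-q)\ln g_\mu(1-y),\qquad y=1-x\in(0,1).
\]
Here $g_\mu(1-y)=(1-y^\mu)/\mu$ for $\mu>0$, and $g_0(1-y)=-\ln y$. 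The advantage is that, because the common difference is the same, the $g_\mu$ factors combine into a single power $r-q$. This should give a monotonicity or unimodality argument directly, without invoking Lemma~\ref{gen-Descartes}.

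\textbf{Case $r>q$.} First I would record the boundary behaviour. As $y\to1^-$, $g_\mu(1-y)\to0$, so $(r-q)\ln g_\mu\to-\infty$; the term $\ln y\to0$, hence $\psi(1^-)=-\infty$. As $y\to0^+$, $(\gamma_{1:r}-\beta_{1:q})\ln y\to-\infty$ since $\gamma_{1:r}>\beta_{1:q}$.
\begin{itemize}
\item For $\mu>0$, $\ln g_\mu(1-y)\to\ln(1/\mu)$, which is finite, so $\psi(0^+)=-\infty$.
\item For $\mu=0$, the term $(r-q)\ln(-\ln y)$ grows only like $\ln\ln(1/y)$, which is dominated by $\ln y$, so again $\psi(0^+)=-\infty$.
\end{itemize}
Next I would compute the derivative.
\begin{itemize}
\item For $\mu>0$: $\psi'(y)=\frac{\gamma_{1:r}-\beta_{1:q}}{y}-\frac{(r-q)\mu y^{\mu-1}}{1-y^\mu}$. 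This vanishes iff $y^\mu=\frac{\gamma_{1:r}-\beta_{1:q}}{\gamma_{1:r}-\beta_{1:q}+(r-q)\mu}\in(0,1)$, which has a unique solution. Moreover $\psi'>0$ before that point and $\psi'<0$ after it.
\item For $\mu=0$: $\psi'(y)=\frac{\gamma_{1:r}-\beta_{1:q}}{y}+\frac{r-q}{y\ln y}$, which vanishes exactly at $\ln y=-\frac{r-q}{\gamma_{1:r}-\beta_{1:q}}$, again with the sign pattern $+-$.
\end{itemize}
So $\psi$ is strictly unimodal with both limits equal to $-\infty$, and its sign variation on $(0,1)$ is either $-$ or $-+-$. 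The difference of two densities integrates to $0$, so it cannot be nonpositive everywhere. Hence it is $-+-$ in $y$. The pattern $-+-$ is symmetric, so reversing to $x=1-y$ leaves it unchanged.

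\textbf{Case $r<q$.} Here $-(r-q)>0$. For $\mu>0$ both terms of $\psi'$ are then positive on $(0,1)$; for $\mu=0$, $\frac{r-q}{y\ln y}>0$ because $\ln y<0$. Thus $\psi$ is strictly increasing in $y$, so strictly decreasing in $x$. The difference therefore has at most one sign change, from $+$ to $-$ in $x$. The zero-integral argument excludes constant sign, giving exactly $+-$.

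The only delicate points are the boundary limits in the case $\mu=0$, $r>q$, and the strictness of the zero-integral exclusion. For the latter, the densities are continuous and not identical, so the difference is not identically zero and cannot be of one sign. I expect no further obstacle, since the shared common difference reduces everything to a single transcendental equation with an explicit unique root.
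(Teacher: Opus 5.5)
Your proof is correct and takes essentially the same route as the paper. The paper also writes the difference as a positive factor times (density ratio $-1$). When $r>q$ it shows by a direct derivative computation that this bracket is increasing--decreasing with value $-1$ at both endpoints, and when $r<q$ it shows the bracket is monotone; in both cases it rules out constant sign with the zero-integral argument. Taking $\ln$ of the ratio, and using $y=1-x$ rather than the paper's $y=(1-x)^\mu$, is only a cosmetic difference.
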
 
\begin{proof}
Suppose that $\mu>0$ and $r>q$. Then
\begin{eqnarray}
\label{14a}
\lefteqn{f_{r, \gamma_{1:r}, \mu} (x) - f_{q, \beta_{1:q}, \mu} (x)
=  f_{q, \beta_{1:r}, \mu} (x)} \nonumber \\
& \qquad\times & \left[
\frac{M(r,\gamma_{1:r},\mu)}{M(q,\beta_{1:q},\nu)}
\frac{(q-1)!}{(r-1)!}\frac{1}{\mu^{r-q}}
(1-x)^{\gamma_{1:r}-\beta_{1:r}} \bigl[1- (1-x)^\mu\bigr]^{r-q}-1\right]\!.
\end{eqnarray}
Under change of variables $y= (1-x)^\mu$ the expression in the second line has the form\linebreak 
$c y^{\frac{\gamma_{1:r}-\beta_{1:r}}{\mu}}(1-y)^{r-q}-1$ for a positive $c$. It takes the value $-1$ when $y=0$ or $y=1$, and it is increasing-decreasing in between, because its derivative
\[
c y^{\frac{\gamma_{1:r}-\beta_{1:r}}{\mu}-1}
(1-y)^{r-q-1}\left[ \frac{\gamma_{1:r}-\beta_{1:r}}{\mu} (1-y) -
(r-q)y \right]
\]
is first positive and then negative.
Consequently, (\ref{14a}) is either negative, positive and negative in $(0,1)$ or merely negative there. However, the latter case is impossible.

\smallskip

Consider now the case when $\mu=0$ and $r>q$. We now have
\begin{equation}
\label{14b}
f_{r, \gamma_{1:r}, 0} (x) - f_{q, \beta_{1:r}, 0} (x)
  =  f_{q, \beta_{1:r}, 0} (x)   \left[ \frac{\gamma_{1:r}^r}{\beta_{1:r}^q}
\frac{(q\!-\!1)!}{(r\!-\!1)!} (1-x)^{\gamma_{1:r}-\beta_{1:r}}
(-\ln (1-x))^{r-q}-1 \right].
\end{equation}
The expression in the brackets is equal to $-1$ at $0$ and $1$, and it is increasing-decreasing in $(0,1)$, because its
derivative
\[
\frac{\gamma_{1:r}^r}{\beta_{1:r}^q}
\frac{(q-1)!}{(r-1)!} (1-x)^{\gamma_{1:r}-\beta_{1:r}-1}
\Bigl[-\ln (1-x)]^{r-q-1}[(\gamma_{1:r}-\beta_{1:r})\ln (1-x) +r-q\Bigr]
\]
is first positive and then negative.  In conclusion, the sign variation of (\ref{14b}) is $-+-$.

\smallskip

We proceed now to the case $r<q$. For $\mu>0$ yields
\begin{eqnarray}
\lefteqn{f_{r, \gamma_{1:r}, \mu} (x) - f_{q, \beta_{1:q}, \mu} (x)
=  \frac{M(r,\beta_{1:r},\nu)}
{(q-1)!\mu^{r-1}}
(1-x)^{\beta_{1:r}-1} [1-(1-x))^\mu ]^{r-1}} \nonumber \\
& & \qquad \times \left[ 
 \frac{M(r,\gamma_{1:r},\mu)}{M(q,\beta_{1:q},\nu)}
\frac{(q-1)!}{(r-1)!}
(1-x)^{\gamma_{1:r}-\beta_{1:q}} - \frac{1}{\mu^{q-r}}
[1-(1-x)^\mu ]^{q-r} \right].
\label{14c}
\end{eqnarray}
The first term in the brackets decreases from a positive number when $x=0$ to $0$ when $x=1$, whereas the following subtrahend increases from $0$ when $x=0$ to a positive value when $x=1$. Therefore (\ref{14c}) is $+-$ on $(0,1)$, as claimed.

We finally consider the case $\mu=0$ with $r<q$. We have
\begin{eqnarray}
\lefteqn{f_{r, \gamma_{1:r}, 0} (x) - f_{q, \beta_{1:q},0} (x)
=  \frac{\beta_{1:r}^q}{(q-1)!}
(1-x)^{\beta_{1:q}-1} [-\ln(1-x)]^{r-1}} \nonumber \\
& & \qquad\qquad\quad \times \left[ \frac{\gamma_{1:r}^r}{\beta_{1:q}^q}
\frac{(q-1)!}{(r-1)!}
(1-x)^{\gamma_{1:r}-\beta_{1:q}} -
\bigl[-\ln (1-x) \bigr]^{q-r} \right].
\label{14d}
\end{eqnarray}
The first term in the second line decreases from a positive number at $0$ to $0$ at $1$, and the latter one increases from $0$ to $+\infty$
on $(0,1)$. Therefore the sign variation of (\ref{14d}) is $+-$. This completes the proof of Lemma \ref{t11}.
\end{proof}

\begin{remark}
\label{rem0}
Define the principal branch of the Lambert function as $\mathcal W$, that is, $y=\mathcal W(x)$ denotes the solution of $y e^y=x$, where $x\geq 0.$ It is worth noting that the unique zero of the difference of density functions in the case when $\mu=\nu=0$ can be expressed in terms of values of $\mathcal W$.
Indeed, it is easily seen that, with $u=-\ln(1-x)$, the root of (\ref{14d}) satisfies
$$
u=\tfrac{q-r}{\gamma_{1:r}-\beta_{1:q}}\mathcal{W}\left(\tfrac{\gamma_{1:r}-\beta_{1:q}}{q-r}
\left(\tfrac{\gamma_{1:r}^r} {\beta_{1:q}^q}
\tfrac{(q-1)!}{(r-1)!}\right)^{-1/(r-q)}\right).
$$
We will use this representation in the simplified framework of $k$-th records in Section~\ref{sec:records}.
\end{remark}

The previous lemmas, together with Theorem~\ref{t0}, immediately imply some stochastic dominance results.
\begin{corollary}\label{c1}
Let $X_{r, \tilde{\gamma}_r}$ and $X_{r,\tilde{\beta}_r}$ denote the $r$-th $m$-\gos\ 
with parameter vectors $\tilde{\gamma}_r= (\gamma_1,\ldots,\gamma_r)$ and $\tilde{\beta}_r= (\beta_1,\ldots ,\beta_r)$ such that
$\gamma_{1:r} \leq \cdots \leq \gamma_{r:r}$ and $\beta_{1:r} \leq \cdots \leq \beta_{r:r}$ form arithmetic sequences with common differences $\mu$ and $\nu$, respectively, and the same baseline distribution function $F$. Assume further that $\gamma_{1:r} >\beta_{1:r}$ together with one of the conditions (i), (iii) or (v) 
of Lemma~\ref{t1}, or $\gamma_{1:r} = \beta_{1:r}$  with $\mu \geq \nu$ hold.  Then $X_{r, \tilde{\gamma}_r} \preceq_{st} X_{r,\tilde{\beta}_r}$.
\end{corollary}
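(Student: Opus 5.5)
Since $X_{r,\tilde{\gamma}_r}\stackrel{d}{=}F^{-1}(U_{r,\tilde{\gamma}_r})$ and $X_{r,\tilde{\beta}_r}\stackrel{d}{=}F^{-1}(U_{r,\tilde{\beta}_r})$ with the same nondecreasing $F^{-1}$, it suffices to prove the comparison for the uniform $m$-\gos, $U_{r,\tilde{\gamma}_r}\preceq_{st} U_{r,\tilde{\beta}_r}$. The conclusion for $X$ then follows from Theorem~\ref{t0} applied with $\mathcal{H}$ the class of nondecreasing functions and $G$ uniform, since $F^{-1}\circ G=F^{-1}$ on $(0,1)$ is nondecreasing. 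More directly, composing with the nondecreasing $F^{-1}$ preserves the usual stochastic order.

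For the uniform comparison in the case $\gamma_{1:r}>\beta_{1:r}$ under (i), (iii) or (v), I invoke part 1 of Lemma~\ref{t1}: the difference $f_{r,\gamma_{1:r},\mu}-f_{r,\beta_{1:r},\nu}$ has sign variation $+-$ on $(0,1)$. I then use the standard single-crossing argument. Set
\[
D(x)=\int_0^x\bigl(f_{r,\gamma_{1:r},\mu}-f_{r,\beta_{1:r},\nu}\bigr)(t)\,dt .
\]
Then $D(0)=D(1)=0$, and $D$ first increases and then decreases, so $D\ge 0$ on $(0,1)$. Hence the distribution function of $U_{r,\tilde{\gamma}_r}$ dominates that of $U_{r,\tilde{\beta}_r}$ pointwise, which is exactly $U_{r,\tilde{\gamma}_r}\preceq_{st}U_{r,\tilde{\beta}_r}$.

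The remaining case $\gamma_{1:r}=\beta_{1:r}$ with $\mu\ge\nu$ is not covered by Lemma~\ref{t1}, and I expect it to be the main point needing care. If $\mu=\nu$ the two variables coincide in distribution and there is nothing to prove. Otherwise $\mu>\nu$, and I would not use densities; instead I would use representation (\ref{1}) with a coupling. Order the parameters increasingly, $\gamma_{i:r}=\gamma_{1:r}+(i-1)\mu\ge\beta_{1:r}+(i-1)\nu=\beta_{i:r}$ for every $i$, and use the same $U_1,\dots,U_r$ for both vectors. Since $U_i\in(0,1)$, we have $U_i^{1/\gamma_{i:r}}\ge U_i^{1/\beta_{i:r}}$. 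Therefore
\[
1-\prod_{i=1}^r U_i^{1/\gamma_{i:r}}\le 1-\prod_{i=1}^r U_i^{1/\beta_{i:r}}
\]
almost surely. This coupling uses only that the marginal law does not depend on the order of the parameters, which the paper notes after (\ref{2}).

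The same coupling would also settle any subcase of the first part where $\gamma_{i:r}\ge\beta_{i:r}$ for all $i$. However, conditions (i) and (iii)(b) allow $\beta_{r:r}>\gamma_{r:r}$, so Lemma~\ref{t1} is genuinely needed there. The only delicate points are checking that the $+-$ pattern yields $D\ge0$, which uses $D(1)=0$ since both are densities, and ensuring the nonstrict case $\gamma_{1:r}=\beta_{1:r}$ is handled by the coupling rather than the lemma.
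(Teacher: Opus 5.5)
Your proposal is correct and follows essentially the same route as the paper. In the case $\gamma_{1:r}>\beta_{1:r}$ you use the $+-$ sign pattern from Lemma~\ref{t1} together with the vanishing of the distribution-function difference at $0$ and $1$. In the case $\gamma_{1:r}=\beta_{1:r}$, $\mu\ge\nu$, you use monotonicity of representation (\ref{1}) in each parameter, which you write out as an explicit coupling. You then transfer the uniform result through the nondecreasing $F^{-1}$.
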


\begin{proof}
Suppose first that $\gamma_{1:r} > \beta_{1:r}$. Under the conditions (i), (iii) or (v) 
of Lemma~\ref{t1} the density difference (\ref{4}) has sign variation $+-$ on the common support interval $(0,1)$. Hence the difference of the respective distribution functions 
is first increasing and then decreasing there. Combining this fact with vanishing of the difference at the interval 
ends $0$ and $1$ we conclude that  $F_{r,\gamma_{1:r},\mu}>F_{r,\beta_{1:r},\nu}$ on $(0,1)$. 
In other words, $U_{r, \tilde{\gamma}_r} \preceq_{st} U_{r,\tilde{\beta}_r}$. Since any nondecreasing transformation of random variables preserves 
the usual stochastic order we also have $X_{r, \tilde{\gamma}_r} \stackrel{d}{=} F^{-1}( U_{r, \tilde{\gamma}_r}) \preceq_{st}F^{-1}( U_{r,\tilde{\beta}_r}) \stackrel{d}{=}  X_{r,\tilde{\beta}_r}$. 

\smallskip

In the case $\gamma_{1:r} = \beta_{1:r}$ with $\mu \geq \nu$ we have $\gamma_{i:r} \geq \beta_{i:r}$, $i=1,\ldots,r$, which implies $U_{r, \tilde{\gamma}_r} \preceq_{st} U_{r,\tilde{\beta}_r}$ and $X_{r, \tilde{\gamma}_r} \preceq_{st} X_{r,\tilde{\beta}_r}$ in consequence. To check this fact it suffices to note that increasing any $\gamma_i$ in (\ref{1}) results in decreasing $ U_{r, \tilde{\gamma}_r}$.
\end{proof}

\begin{corollary}\label{c11}
Suppose that  $X_{r, \tilde{\gamma}_r}$ and $X_{q,\tilde{\beta}_q}$ are the \gos\ with parameter vectors $\tilde{\gamma}_r= (\gamma_{1:r},\ldots,\gamma_r)$ and $\tilde{\beta}_q= (\beta_1,\ldots ,\beta_q)$ such that  $\gamma_{1:r} \leq \cdots \leq \gamma_{r:r}$ and  $\beta_{1:r} \leq \cdots \leq \beta_{q:q}$ form arithmetic sequences with identical common differences $\mu$, and  common baseline distribution functions $F$. 
If $\gamma_{1:r} \geq \beta_{1:q}$ and $r \leq q$, then $X_{r, \tilde{\gamma}_r} \preceq_{st} X_{q,\tilde{\beta}_q}$.
\end{corollary}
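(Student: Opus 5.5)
The plan is to reduce everything to the uniform case and then transfer the conclusion through $F^{-1}$, just as in the proof of Corollary~\ref{c1}. Since $X_{r,\tilde{\gamma}_r}\stackrel{d}{=}F^{-1}(U_{r,\tilde{\gamma}_r})$ and $X_{q,\tilde{\beta}_q}\stackrel{d}{=}F^{-1}(U_{q,\tilde{\beta}_q})$, and $F^{-1}$ is nondecreasing, it suffices to show $U_{r,\tilde{\gamma}_r}\preceq_{st}U_{q,\tilde{\beta}_q}$. I would split into the cases $r<q$ and $r=q$. In each case the boundary situation $\gamma_{1:r}=\beta_{1:q}$ needs separate handling, because Lemma~\ref{t11} assumes strict inequality of the minimal parameters.

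Case $r<q$ with $\gamma_{1:r}>\beta_{1:q}$. By Lemma~\ref{t11}, the difference $f_{r,\gamma_{1:r},\mu}-f_{q,\beta_{1:q},\mu}$ has sign variation $+-$ on $(0,1)$. Consequently $F_{r,\gamma_{1:r},\mu}-F_{q,\beta_{1:q},\mu}$ is first increasing and then decreasing on $(0,1)$. It vanishes at both endpoints, since both are distribution functions supported on $(0,1)$. Hence the difference is nonnegative on $(0,1)$, which gives exactly $U_{r,\tilde{\gamma}_r}\preceq_{st}U_{q,\tilde{\beta}_q}$.

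Case $r=q$. The sequences then have the same length, the same common difference $\mu$, and $\gamma_{1:r}\ge\beta_{1:r}$, so $\gamma_{i:r}\ge\beta_{i:r}$ for all $i$. As noted at the end of the proof of Corollary~\ref{c1}, increasing any parameter in (\ref{1}) decreases $1-\prod_i U_i^{1/\gamma_i}$ pointwise, because $U_i\in(0,1)$. This yields the ordering directly by coupling.

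Boundary case $r<q$ with $\gamma_{1:r}=\beta_{1:q}$. I would avoid densities and use the coupling representation (\ref{1}) directly, which in fact handles all cases at once. Order the parameters increasingly, take the first $r$ coordinates of the $\beta$-vector, and use the same $U_1,\dots,U_q$. Then
\[
U_{q,\tilde{\beta}_q}=1-\prod_{i=1}^q U_i^{1/\beta_{i:q}}\ \ge\ 1-\prod_{i=1}^r U_i^{1/\beta_{i:q}}\ \ge\ 1-\prod_{i=1}^r U_i^{1/\gamma_{i:r}}=U_{r,\tilde{\gamma}_r}.
\]
The first inequality holds because the extra factors lie in $(0,1)$. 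The second holds because $\gamma_{i:r}=\gamma_{1:r}+(i-1)\mu\ge\beta_{1:q}+(i-1)\mu=\beta_{i:q}$. Here we use that the marginal law of a single \gos\ does not depend on the order of its parameters. This pointwise coupling proves the corollary in full generality, so I would present it as the main argument and mention Lemma~\ref{t11} only as an alternative route.

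The main subtlety I expect is the legitimacy of reordering the parameters and restricting to the first $r$ of them. One must check that the distribution of $U_{r}$ built from the $r$ smallest $\beta$'s (respectively the $\gamma$'s) is that of the stated \gos, which follows from the order-invariance remark after (\ref{2}). Everything else is routine.
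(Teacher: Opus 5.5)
Your proof is correct, and its final form takes a slightly different route from the paper's. The paper uses the same three-way split as your first draft. In the case $r<q$, $\gamma_{1:r}>\beta_{1:q}$ it relies on Lemma~\ref{t11} (sign variation $+-$ of the density difference) plus the distribution-function argument of Corollary~\ref{c1}. It invokes the chain $X_{r,\tilde{\gamma}_r}\preceq_{st}X_{r,\tilde{\beta}_r}\preceq_{st}X_{q,\tilde{\beta}_q}$ only in the boundary case $\gamma_{1:r}=\beta_{1:q}$.

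Your key observation is that this chain, realized pointwise on common $U_1,\dots,U_q$, never uses equality of the minimal parameters. So Lemma~\ref{t11} is superfluous for this corollary, and your single coupling argument replaces the case analysis.

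What your route buys:
\begin{itemize}
\item It is more elementary and gives an almost-sure ordering directly.
\item It is more general: it needs only $\gamma_{i:r}\ge\beta_{i:q}$ for $i=1,\dots,r$ (ordered parameters) together with $r\le q$. It therefore also covers, e.g., $\gamma_{1:r}\ge\beta_{1:q}$ with $\mu\ge\nu$, and even non-arithmetic \gos.
\end{itemize}

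What the paper's density route buys: the same sign-variation information is what drives the later $\preceq_{icv}$, $\preceq_{icx}$ and $\preceq_{ss}$ results. There the usual stochastic order fails, so no monotone coupling of this kind can exist.

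One small clarification on the subtlety you flagged. The middle term $1-\prod_{i=1}^r U_i^{1/\beta_{i:q}}$ does not need to have the law of any \gos\ built from the original $\beta$-sequence. Only the two endpoints of the chain must have the laws of $U_{q,\tilde{\beta}_q}$ and $U_{r,\tilde{\gamma}_r}$. That follows from exchangeability of the i.i.d.\ $U_i$, which is exactly the order-invariance remark after (\ref{2}).
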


\begin{proof} If $\gamma_{1:r} > \beta_{1:q}$ and $r < q$, we get the statement using the arguments of the first part of the previous corollary. If $r=q$ and $\gamma_{1:r} \geq \beta_{1:q}$ we have $\gamma_{i:r} \geq \beta_{i:q}$, $i=1,\ldots,r$, and relation 
$X_{r, \tilde{\gamma}_r} \preceq_{st} X_{r,\tilde{\beta}_r}$  follows for $\beta_{1:q}, \ldots,\beta_{r:q}$ being the ordered coordinates of 
$\tilde{\beta}_r$. If $r<q$ and $\gamma_{1:r} = \beta_{1:q}$ we simply observe $X_{r, \tilde{\gamma}_r} \preceq_{st} X_{r,\tilde{\beta}_r} \preceq_{st} X_{q,\tilde{\beta}_q}$.
\end{proof}


\begin{remark}\label{rem1}
The following extensions of Corollaries~\ref{c1} and \ref{c11} are immediate:
    \begin{enumerate}
    \item
    by changing the roles of the parameter vectors $\tilde{\gamma}_r$ and $\tilde{\beta}_q$ (and $\tilde{\beta}_r$ in particular) we obtain analogous results in the cases $\gamma_{1:r} \leq \beta_{1:q}$;
    
    \item
    replacing $X_{r, \tilde{\gamma}_r}$ and $X_{q,\tilde{\beta}_q}$ by $X_{p,\tilde{\gamma}_p}$, and $X_{s,\tilde{\beta}_s}$, respectively, where $p<r$ with $\tilde{\gamma}_p$ consists of a choice of any $p$ elements of $\tilde{\gamma}_r$, and $s>q$, 
    $\tilde{\beta}_s=(\beta_1, \ldots, \beta_q, \beta_{q+1}, \ldots, \beta_s)$ with arbitrary $\beta_{s+1},\ldots,\beta_s>0$ and baseline distribution function $G$ of $X_{s,\tilde{\beta}_s}$ satisfying $G \succeq_{st} F$ we get $X_{p,\tilde{\gamma}_p} 
    \preceq_{st} X_{s, \tilde{\beta}_s}$.
    \end{enumerate}
\end{remark}


\subsection{Increasing convex and increasing concave orders}

Clearly the usual stochastic orders proved under the conditions of the Corollaries \ref{c1} and \ref{c11} and Remark \ref{rem1}
imply the increasing convex, increasing concave and star-shaped orderings among the above $m$-generalized order statistics. For establishing the increasing convex and concave orders assuming other shape relations on the baseline distribution, we use the following lemma.

\begin{lemma}[\cite{shaked2007}, Theorem 4.A.22(b)] \label{l0}
If two random variables $X$ and $Y$ have finite expectations satisfying $\mathbb{E}X \leq\mathbb{E}Y$ and distribution functions $F$ and $G$, respectively,
such that the difference $G-F$ ($F-G$) are first positive and then negative, then $X \preceq_{icx} Y$ ($X \preceq_{icv} Y$, respectively).
\end{lemma}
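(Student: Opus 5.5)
The plan is to use the standard integral characterizations of the two orders. Since $X$ and $Y$ have finite expectations,
\[
X \preceq_{icx} Y \iff \int_t^\infty \bigl(1-F(x)\bigr)\,dx \leq \int_t^\infty \bigl(1-G(x)\bigr)\,dx \quad \text{for all } t\in\mathbb{R},
\]
\[
X \preceq_{icv} Y \iff \int_{-\infty}^t F(x)\,dx \geq \int_{-\infty}^t G(x)\,dx \quad \text{for all } t\in\mathbb{R}.
\]
These follow by approximating nondecreasing convex (concave) functions by positive combinations of $(x-t)_+$ (respectively $\min(x,t)$) plus constants (\cite{shaked2007}, Section 4.A). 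In each case the claim then reduces to showing that a suitable antiderivative of $\pm(F-G)$ is nonnegative. This will follow from a unimodality argument combined with the behaviour of that antiderivative at $\pm\infty$.

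\emph{Increasing convex case.} Put $\Delta(t)=\int_t^\infty \bigl(F(x)-G(x)\bigr)\,dx$, which is the right-hand side minus the left-hand side above. It is finite because both means are finite, and $\Delta(t)\to 0$ as $t\to\infty$. As $t\to-\infty$ we have $\Delta(t)\to\mathbb{E}Y-\mathbb{E}X\geq 0$; this follows from $\mathbb{E}X=\int_0^\infty(1-F)-\int_{-\infty}^0F$ and the same formula for $Y$. Next, $\Delta$ is absolutely continuous with $\Delta'(t)=G(t)-F(t)$ almost everywhere. By hypothesis this derivative is first positive and then negative, so $\Delta$ is nondecreasing up to the crossing point and nonincreasing afterwards. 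A function of this shape attains its infimum over $\mathbb{R}$ in the limit at one of the two ends. Hence
\[
\Delta(t)\geq \min\{\mathbb{E}Y-\mathbb{E}X,\,0\}=0 \quad \text{for all } t,
\]
which gives $X\preceq_{icx}Y$.

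\emph{Increasing concave case.} Put $\Phi(t)=\int_{-\infty}^t\bigl(F(x)-G(x)\bigr)\,dx$. Then $\Phi(t)\to 0$ as $t\to-\infty$ and $\Phi(t)\to \mathbb{E}Y-\mathbb{E}X\geq 0$ as $t\to\infty$. Its derivative is $F-G$, which is now assumed to be first positive and then negative. So $\Phi$ is again increasing and then decreasing, and the same argument yields $\Phi\geq 0$, that is, $X\preceq_{icv}Y$.

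The main point requiring care is not the monotonicity argument but the justification of the integral characterizations and of the limits at $\pm\infty$. For this I would rely on finiteness of the means, which guarantees $\int_0^\infty(1-F)<\infty$ and $\int_{-\infty}^0F<\infty$ (and likewise for $G$). With these in hand, the boundary values used above are exactly $0$ and $\mathbb{E}Y-\mathbb{E}X$.
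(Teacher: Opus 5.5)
Your proof is correct: the two limits of $\Delta$ and $\Phi$, both equal to $0$ and $\int_{\mathbb{R}}(F-G)=\mathbb{E}Y-\mathbb{E}X\ge 0$, together with their unimodal shape, give exactly the stop-loss inequalities needed for $\preceq_{icx}$ and $\preceq_{icv}$. The paper gives no argument of its own and simply imports the result from Shaked and Shanthikumar's Theorem 4.A.22(b), and your argument is the standard cut-criterion proof underlying that cited result.
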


We consider first families of distributions that have monotone failure rate, possibly in their generalized version, that is, distributions that can be compared with some generalized Pareto distribution through the convex transform order.
{\samepage
\begin{proposition}\label{c2}
Let $X_{r,\gamma_{1:r},\mu}$ and $X_{r,\beta_{1:r},\nu}$ denote the $r$-th $m$-\gos\ with minimal parameters $\gamma_{1:r}$, $\beta_{1:r}$,
respectively, common differences $\mu$ and $\nu$, respectively, and a common baseline distribution function $F$. Assume that either
\begin{equation*}
\label{18}
\gamma_{1:r} > \beta_{1:r},  \qquad \mu,\nu >0, \qquad \prod_{i=1}^r
\frac{M(r,\gamma_{1:r},\mu)}{M(r,\beta_{1:r},\nu)}<1,
\end{equation*}
or
\begin{equation*}
\label{19}
\beta_{r:r}= \beta_{1:r}+(r-1)\nu > \gamma_{1:r} > \beta_{1:r},
\qquad \mu= 0 < \nu,
\qquad 
\frac{M(r,\gamma_{1:r},0)}{M(r,\beta_{1:r},\nu)}<1.
\end{equation*}
Under either of conditions
\begin{enumerate}[(i)]
\item
$F\preceq_c W_0$ ($F$ is IFR) and
\begin{equation}
\label{20}
\sum_{i=1}^r \frac{1}{\gamma_{1:r}+ (i-1)\mu} \leq
\sum_{i=1}^r \frac{1}{\beta_{1:r}+ (i-1)\nu},
\end{equation}

\item
$F\preceq_c W_\alpha$ for some $\alpha >0$ ($F$ is $\alpha$-IGFR) and
\begin{equation}
\label{21}
\frac{M(r,\gamma_{1:r},\mu)}{M(r,\alpha+\gamma_{1:r},\mu)} \geq \frac{M(r,\beta_{1:r},\mu)}{M(r,\alpha+\beta_{1:r},\mu)},
\end{equation}

\item
$F\preceq_c W_\alpha$ for some $-\beta_{1:r} < \alpha <0$
($F$ is $\alpha$-IGFR) and
\begin{equation}
\label{22}
\frac{M(r,\gamma_{1:r},\mu)}{M(r,\alpha+\gamma_{1:r},\mu)} \leq \frac{M(r,\beta_{1:r},\mu)}{M(r,\alpha+\beta_{1:r},\mu)},
\end{equation}
\end{enumerate}
we have $X_{r,\gamma_{1:r},\mu} \preceq_{icv}X_{r,\beta_{1:r},\nu}$.
\end{proposition}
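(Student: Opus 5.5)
Following the proof of Theorem~\ref{t0}, the plan is to first prove $Y_{r,\gamma_{1:r},\mu}\preceq_{icv}Y_{r,\beta_{1:r},\nu}$ when the baseline is $W_\alpha$ itself ($\alpha=0$ in case (i)). The relation then transfers to a general $F\preceq_c W_\alpha$, because $F^{-1}\circ W_\alpha$ is nondecreasing concave and the family of nondecreasing concave functions is closed under composition. Equivalently, for $h$ nondecreasing concave, $h\circ F^{-1}\circ W_\alpha$ is again nondecreasing concave, so
\[
\mathbb{E}h(X_{r,\gamma_{1:r},\mu})=\mathbb{E}\,h\circ F^{-1}\circ W_\alpha\bigl(W_\alpha^{-1}(U_{r,\gamma_{1:r},\mu})\bigr),
\]
and the comparison reduces to the $W_\alpha$-baseline $m$-\gos.

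For the baseline $W_\alpha$ I would apply Lemma~\ref{l0}. The two parameter hypotheses are exactly conditions (ii) and (iv) of Lemma~\ref{t1}. Hence $f_{r,\gamma_{1:r},\mu}-f_{r,\beta_{1:r},\nu}$ has sign variation $-+-$ on $(0,1)$. Integrating, and using that the difference of the uniform distribution functions vanishes at $0$ and $1$, we see that $F_{r,\gamma_{1:r},\mu}-F_{r,\beta_{1:r},\nu}$ is first negative and then positive. The difference $G-F$ between the distribution function of $Y_{r,\beta_{1:r},\nu}$ and that of $Y_{r,\gamma_{1:r},\mu}$ is therefore first positive, then negative. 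This sign pattern is preserved under composition with the increasing map $W_\alpha$. With the roles $X=Y_{r,\gamma_{1:r},\mu}$ and $Y=Y_{r,\beta_{1:r},\nu}$, the orientation needed in Lemma~\ref{l0} for $\preceq_{icv}$ is that $F-G$, i.e.\ (d.f.\ of $X$) minus (d.f.\ of $Y$), is first positive and then negative. Because $-+-$ patterns for densities can yield either crossing direction depending on which tail dominates, this orientation must be checked carefully against the correct form of Lemma~\ref{l0}. The key point is that there is exactly one crossing, so once the mean condition holds, Lemma~\ref{l0} applies.

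It then remains to verify $\mathbb{E}Y_{r,\gamma_{1:r},\mu}\le \mathbb{E}Y_{r,\beta_{1:r},\nu}$ in each case. For $\alpha=0$, representation (\ref{2}) gives $W_0^{-1}(U_{r})=-\sum_i \gamma_i^{-1}\ln U_i$, whose mean is $\sum_i 1/\gamma_i$; this turns the mean condition into (\ref{20}). For $\alpha\neq 0$ we have $W_\alpha^{-1}(u)=\bigl(1-(1-u)^\alpha\bigr)/\alpha$ and $1-U_r=\prod_i U_i^{1/\gamma_i}$. Hence
\[
\mathbb{E}(1-U_r)^\alpha=\prod_i \frac{\gamma_i}{\gamma_i+\alpha}=\frac{M(r,\gamma_{1:r},\mu)}{M(r,\alpha+\gamma_{1:r},\mu)}.
\]
This expectation requires $\gamma_i+\alpha>0$, which explains the restriction $\alpha>-\beta_{1:r}$. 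The mean is thus $\bigl(1-M(r,\gamma_{1:r},\mu)/M(r,\alpha+\gamma_{1:r},\mu)\bigr)/\alpha$. For $\alpha>0$, the mean inequality is equivalent to (\ref{21}); for $\alpha<0$, dividing by $\alpha$ reverses it and gives (\ref{22}). In these displays the second ratio should carry $\nu$ rather than $\mu$, so I would read it that way.

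I expect the main obstacle to be the bookkeeping in the middle step. One has to pin down the direction of the single crossing of the distribution functions produced by the $-+-$ density pattern, and match it to the exact orientation in Lemma~\ref{l0}, so that the conclusion is $\preceq_{icv}$ rather than $\preceq_{icx}$. Finiteness of the expectations also needs checking: this is automatic for $\alpha\ge 0$ and follows from $\alpha>-\beta_{1:r}$ otherwise.
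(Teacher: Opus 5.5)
\textbf{There is a genuine gap, and it cannot be closed.} Your route is the paper's route: the sign pattern from Lemma~\ref{t1}(ii)/(iv), the means from the product representation, Lemma~\ref{l0}, and then Theorem~\ref{t0} with the concave family. Your transfer step and your mean computations are correct. The gap is exactly the step you flagged and then waved through.

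You correctly derived that $F_{r,\gamma_{1:r},\mu}-F_{r,\beta_{1:r},\nu}$ is first negative and then positive. You also correctly read that the $\preceq_{icv}$ half of Lemma~\ref{l0} needs (d.f.\ of $X$) minus (d.f.\ of $Y$) to be first positive and then negative. These two facts are incompatible, and there is nothing left to check. Both distribution functions vanish at $0$ and the density difference is negative first, so the crossing direction is forced. Your remark that a $-+-$ density pattern ``can yield either crossing direction'' is therefore false.

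With this direction and $\mathbb{E}Y_{r,\gamma_{1:r},\mu}\le\mathbb{E}Y_{r,\beta_{1:r},\nu}$, Lemma~\ref{l0} gives $\preceq_{icx}$, not $\preceq_{icv}$. Moreover, $\preceq_{icx}$ transfers through Theorem~\ref{t0} only when $F^{-1}\circ W_\alpha$ is convex, i.e.\ $F\succeq_c W_\alpha$ (DFR-type), not under the stated $F\preceq_c W_\alpha$. The paper's own proof makes the identical move (``By Lemma~\ref{l0} it follows that \dots\ $\preceq_{icv}$''), so the defect lies in the statement itself, not only in your write-up.

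No mean condition or shape assumption on $F$ can repair this. Near $0$ the uniform $m$-\gos\ densities behave like $M\,x^{r-1}/(r-1)!$. Hence $M(r,\gamma_{1:r},\mu)<M(r,\beta_{1:r},\nu)$ alone already gives $F_{r,\gamma_{1:r},\mu}<F_{r,\beta_{1:r},\nu}$ on some interval $(0,x_0)$. So for any continuous baseline $F$ with finite means, and any $t$ with $0<F(t)<x_0$,
\[
\mathbb{E}\min(X_{r,\gamma_{1:r},\mu},t)=t-\int_{-\infty}^{t}F_{r,\gamma_{1:r},\mu}(F(s))\,ds>t-\int_{-\infty}^{t}F_{r,\beta_{1:r},\nu}(F(s))\,ds=\mathbb{E}\min(X_{r,\beta_{1:r},\nu},t),
\]
while $\min(\cdot,t)$ is nondecreasing concave.

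The hypotheses are not vacuous. Take $r=2$, $\gamma_{1:2}=2$, $\mu=1$, $\beta_{1:2}=1$, $\nu=10$, so that $M(2,2,1)=6<11=M(2,1,10)$. Take $F=W_1$, the uniform distribution, which trivially satisfies $F\preceq_c W_1$. The densities are $6x(1-x)$ and $\tfrac{11}{10}\bigl(1-(1-x)^{10}\bigr)$, and the means are $\tfrac12\le\tfrac{13}{24}$. So (\ref{21}) holds, whether the right-hand ratio carries $\nu$ or the printed $\mu$. Yet at $t=0.1$ the two expectations are $0.09905>0.09852$.

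What your argument actually proves is $X_{r,\gamma_{1:r},\mu}\preceq_{icx}X_{r,\beta_{1:r},\nu}$ under $F\succeq_c W_\alpha$ together with (\ref{20})--(\ref{22}). The IFR-type/$\preceq_{icv}$ pairing belongs to the mirrored parameter configuration of Proposition~\ref{c3}, where the distribution-function difference is first positive and then negative.
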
}

\begin{proof}
Suppose that the conditions of the latter claim of Lemma~\ref{t1} hold. Then the sign variation of (\ref{4}) is $-+-$.
The respective distribution function difference first decreases from
$0$ at $0$ to a negative minimum, then increases and eventually decreases to $0$ at $1$. 
This necessarily means that the maximal value is positive, and the difference of the distributions functions is first negative and then positive on $(0,1)$.

\smallskip

We apply Theorem~\ref{t0} in combination with Lemma~\ref{l0}. By Lemma~\ref{t1} the differences $F_{r,\gamma_{1:r},\mu}- F_{r, \beta_{1:r},\nu}$ of distribution functions of $r$-th uniform $m$-\gos\ $U_{r,\gamma_{1:r}, \mu}$ and $U_{r,\beta_{1:r},\nu}$  are first negative and then positive.
The same sign variation holds for the difference 
of distribution functions of the $m$-\gos\ $Y_{r,\gamma_{1:r}, \mu,\alpha} = W_\alpha^{-1}(U_{r,\gamma_{1:r}, \mu})$
and $Y_{r,\beta_{1:r},\nu,\alpha}  =W_\alpha^{-1}(U_{r,\beta_{1:r},\nu})$ with the baseline distribution functions $W_\alpha$, $\alpha \in \mathbb{R}$.
For the expected values, we have
\begin{eqnarray}
\label{23a}
\mathbb{E}Y_{r,\tilde{\gamma}_r,\alpha} & = &
\left\{ 
\begin{array}{ll}
\sum_{i=1}^r \frac{1}{\gamma_{i}}, & \alpha=0, \\[1.75ex]
\frac{1}{\alpha} \left[ 1- 
\prod_{i=1}^r \frac{\gamma_i}{\alpha+\gamma_i} 
\right], 
&  \alpha\neq 0
\end{array}
\right.
\end{eqnarray}
in the general set-up, and
\begin{equation}
\label{23b}
\mathbb{E}Y_{r,\gamma_{1:r}, \mu,\alpha} = \left\{
\begin{array}{ll}
\sum_{i=1}^r \frac{1}{\gamma_{1:r}+(i-1)\mu}, & \alpha=0, \\[1.75ex]
\frac{1}{\alpha}\left[ 1- 
\frac{M(r,\gamma_{1:r},\mu)}{M(r,\alpha+\gamma_{1:r},\mu)}
\right],
& \alpha\neq 0,
\end{array}
\right.
\end{equation}
for the $m$-\gos\ submodel with  $\alpha > -\gamma_{1:r}$ in both the cases.
Conditions (\ref{20}), (\ref{21}) and (\ref{22}) are equivalent
to $\mathbb{E}Y_{r,\gamma_{1:r}, \mu,\alpha} \leq \mathbb{E}Y_{r,\beta_{1:r}, \nu,\alpha}$ in the cases $\alpha= 0$, $\alpha>0$, and $\alpha<0$,  respectively.
By Lemma~\ref{l0} it follows that $Y_{r,\gamma_{1:r}, \mu,\alpha} \preceq_{icv} Y_{r,\beta_{1:r}, \nu,\alpha}$.
Due to Theorem~\ref{t0}, under the assumption $F \preceq_c W_\alpha$, we also have $X_{r,\gamma_{1:r}, \mu} \preceq_{icv}X_{r,\beta_{1:r}, \nu}$ for $m$-\gos\ with baseline distribution function~$F$.
%
\end{proof} 


In a similar way, we prove the following.
\begin{proposition}\label{c3}
Suppose that either
\begin{equation*}
\label{24}
\gamma_{1:r} < \beta_{1:r},  \qquad \mu,\nu >0, \qquad
\frac{M(r,\gamma_{1:r},\mu)}{M(r,\beta_{1:r},\nu)}>1,
\end{equation*}
or
\begin{equation*}
\label{25}
\gamma_{r:r} > \beta_{1:r} > \gamma_{1:r}, \qquad \nu= 0 < \mu,
\qquad 
\frac{M(r,\beta_{1:r},0)}{M(r,\gamma_{1:r},\mu)}<1.
\end{equation*}
hold. Under either of conditions
    \begin{enumerate}[(i)]
    \item
    $F\succeq_c W_0$ ($F$ is DFR) with (\ref{20}),

    \item
    $F\succeq_c W_\alpha$ for some $\alpha >0$ ($F$ is $\alpha$-DGFR)
    with (\ref{21}),

    \item
    $F\succeq_c W_\alpha$ for some $-\gamma_{1:r} < \alpha <0$
    ($F$ is $\alpha$-DGFR) with (\ref{22}),
    \end{enumerate}
we get $X_{r,\gamma_{1:r},\mu} \preceq_{icx}X_{r,\beta_{1:r},\nu}$ with the notation described in Proposition {\rm \ref{c2}}.

\end{proposition}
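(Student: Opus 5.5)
The plan is to mirror the proof of Proposition~\ref{c2} with the roles of the two parameter vectors interchanged. The hypotheses of Proposition~\ref{c3} are exactly the hypotheses of the second claim of Lemma~\ref{t1} once we rename $(\gamma,\mu)\leftrightarrow(\beta,\nu)$.

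\emph{First condition.} We have $\beta_{1:r}>\gamma_{1:r}$, $\nu,\mu>0$ and $M(r,\beta_{1:r},\nu)/M(r,\gamma_{1:r},\mu)<1$. This is case (ii) of Lemma~\ref{t1} for the pair $(\beta,\nu)$, $(\gamma,\mu)$.

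\emph{Second condition.} We have $\nu=0<\mu$, $\gamma_{r:r}>\beta_{1:r}>\gamma_{1:r}$ and $M(r,\beta_{1:r},0)/M(r,\gamma_{1:r},\mu)<1$. This is case (iv) for the same swapped pair.

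In both cases Lemma~\ref{t1} gives that $f_{r,\beta_{1:r},\nu}-f_{r,\gamma_{1:r},\mu}$ has sign variation $-+-$ on $(0,1)$. Integrating from $0$, the difference $F_{r,\beta_{1:r},\nu}-F_{r,\gamma_{1:r},\mu}$ vanishes at $0$ and $1$. It first decreases to a negative minimum, then increases, and finally decreases back to $0$. So its interior maximum must be positive, and the difference is $-+$ on $(0,1)$. Equivalently, $F_{r,\gamma_{1:r},\mu}-F_{r,\beta_{1:r},\nu}$ is first positive and then negative. Since $W_\alpha^{-1}$ is increasing, the same sign pattern holds for the distribution functions of $Y_{r,\gamma_{1:r},\mu,\alpha}=W_\alpha^{-1}(U_{r,\gamma_{1:r},\mu})$ and $Y_{r,\beta_{1:r},\nu,\alpha}=W_\alpha^{-1}(U_{r,\beta_{1:r},\nu})$. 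For $X=Y_{r,\gamma_{1:r},\mu,\alpha}$ and $Y=Y_{r,\beta_{1:r},\nu,\alpha}$ we have $G-F$ first positive then negative in the notation of Lemma~\ref{l0}, which is the icx part of that lemma.

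Next we check the mean condition. Formula (\ref{23b}) gives the expectations of the generalized Pareto $m$-\gos. Conditions (\ref{20}), (\ref{21}) and (\ref{22}) are equivalent to $\mathbb{E}Y_{r,\gamma_{1:r},\mu,\alpha}\le\mathbb{E}Y_{r,\beta_{1:r},\nu,\alpha}$ for $\alpha=0$, $\alpha>0$ and $\alpha<0$ respectively, exactly as in Proposition~\ref{c2}. The sign of $1/\alpha$ flips the direction of the product inequality. Finiteness of the means needs $\alpha>-\min(\gamma_{1:r},\beta_{1:r})=-\gamma_{1:r}$, which is why the range $-\gamma_{1:r}<\alpha<0$ is imposed in (iii). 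Lemma~\ref{l0} then yields $Y_{r,\gamma_{1:r},\mu,\alpha}\preceq_{icx}Y_{r,\beta_{1:r},\nu,\alpha}$.

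Finally we transfer the result to $F$ with Theorem~\ref{t0}, taking $\mathcal H$ to be the nondecreasing convex functions. This family is closed under composition. The assumption $F\succeq_c W_\alpha$ means $F^{-1}\circ W_\alpha$ is convex and nondecreasing, so $F\succeq^T_{\mathcal H}W_\alpha$. Theorem~\ref{t0} with $G=W_\alpha$ then gives $X_{r,\gamma_{1:r},\mu}\preceq_{icx}X_{r,\beta_{1:r},\nu}$.

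The only delicate point is the bookkeeping. We must check that the parameter relabeling really matches cases (ii) and (iv) of Lemma~\ref{t1}, including the condition $\gamma_{r:r}>\beta_{1:r}$ in the second hypothesis. We must also check that the direction of the crossing is the one Lemma~\ref{l0} requires for icx rather than icv. Everything else is a routine repetition of the proof of Proposition~\ref{c2}.
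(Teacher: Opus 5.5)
\textbf{The proof fails at the application of Lemma~\ref{l0}, and the statement cannot be repaired as written.}

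In Lemma~\ref{l0}, $F$ is the distribution function of the variable claimed to be smaller ($X$) and $G$ that of $Y$. With $X=Y_{r,\gamma_{1:r},\mu,\alpha}$ and $Y=Y_{r,\beta_{1:r},\nu,\alpha}$, the difference $G-F$ is $F_{r,\beta_{1:r},\nu}-F_{r,\gamma_{1:r},\mu}$ (evaluated at $W_\alpha$). You yourself showed this is first negative and then positive. So it is $F-G$ that is first positive and then negative, which is the icv case of Lemma~\ref{l0}. With the mean conditions you therefore get $Y_{r,\gamma_{1:r},\mu,\alpha}\preceq_{icv}Y_{r,\beta_{1:r},\nu,\alpha}$. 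Transferring an icv relation through Theorem~\ref{t0} requires $F^{-1}\circ W_\alpha$ to be nondecreasing concave, i.e.\ $F\preceq_c W_\alpha$, not $F\succeq_c W_\alpha$. Read correctly, your argument proves the variant with $\alpha$-IGFR baselines and $\preceq_{icv}$. For the mean equivalence, the right-hand sides of (\ref{21}) and (\ref{22}) must also carry $\nu$ in place of $\mu$.

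The icx conclusion is in fact false. Since $\gamma_{1:r}<\beta_{1:r}$, (\ref{2a}) gives $f_{r,\gamma_{1:r},\mu}(x)/f_{r,\beta_{1:r},\nu}(x)\to\infty$ as $x\to 1$; when $\nu=0$ a power still beats the logarithm. Hence $X_{r,\gamma_{1:r},\mu}$ has the strictly larger survival function near the right end of the support. It follows that $\mathbb{E}(X_{r,\gamma_{1:r},\mu}-t)_+>\mathbb{E}(X_{r,\beta_{1:r},\nu}-t)_+$ for such $t$. Because $x\mapsto(x-t)_+$ is nondecreasing convex, $\preceq_{icx}$ fails for any continuous baseline with finite means.

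A concrete counterexample: take $F=W_0$ (trivially $F\succeq_c W_0$), $r=2$, $\gamma_{1:2}=1$, $\mu=100$, $\beta_{1:2}=1.1$, $\nu=0.01$. Then $M(2,1,100)/M(2,1.1,0.01)=101/1.221>1$, and (\ref{20}) holds since $1+\frac{1}{101}<\frac{1}{1.1}+\frac{1}{1.11}$. With $E_1,E_2$ i.i.d.\ standard exponential, $X=E_1+E_2/101$ and $Y=E_1/1.1+E_2/1.11$. Their tails are $\frac{101}{100}e^{-t}-\frac{1}{100}e^{-101t}$ and $111e^{-1.1t}-110e^{-1.11t}$. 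So, for example, $\mathbb{E}(X-100)_+\approx 1.01e^{-100}>64.5e^{-110}\approx\mathbb{E}(Y-100)_+$.

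The paper gives only ``in a similar way'' for this proposition. Its proof of Proposition~\ref{c2} contains the same misreading of Lemma~\ref{l0}: it infers icv from $F_{r,\gamma_{1:r},\mu}-F_{r,\beta_{1:r},\nu}$ being first negative and then positive, which is the icx case. So you have faithfully reproduced an error in the source. The pairs IGFR/$\preceq_{icv}$ and DGFR/$\preceq_{icx}$ in Propositions~\ref{c2} and \ref{c3} should be interchanged.
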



The statements of Lemma~\ref{t11} allow us to formulate the following conclusions. The proofs are similar to the proofs of
Propositions \ref{c2}--\ref{c3}.

\begin{proposition}\label{c2a}
Assume the $X_{r,\gamma_{1:r}, \mu}$ and  $X_{q,\beta_{1:q},\mu}$, with $r > q$ and $\gamma_{1:r} > \beta_{1:q}$,
denote the $r$-th and $q$-th $m$-\gos\ with minimal parameters  $\gamma_{1:r}$ and $\beta_{1:r}$, respectively, identical common differences $\mu$, and identical baseline distribution function $F$.
Under either of conditions
    \begin{enumerate}[(i)]
    \item
    $F\preceq_c W_0$ ($F$ is IFR) and
    \begin{equation}
    \label{20a}
    \sum_{i=1}^r \frac{1}{\gamma_{1:r}+ (i-1)\mu} \leq
    \sum_{i=1}^q \frac{1}{\beta_{1:q}+ (i-1)\mu},
    \end{equation}

    \item
    $F\preceq_c W_\alpha$ for some $\alpha >0$ ($F$ is $\alpha$-IGFR) and
    \begin{equation}
    \label{21a}
    \frac{M(r,\gamma_{1:r},\mu)}{M(r,\alpha+\gamma_{1:r},\mu)} \geq \frac{M(q,\beta_{1:q},\mu)}{M(q,\alpha+\beta_{1:q},\mu)},
    \end{equation}

    \item
    $F\preceq_c W_\alpha$ for some $-\beta_{1:q} < \alpha <0$
    ($F$ is $\alpha$-IGFR) and
    \begin{equation}
    \label{22a}
    \frac{M(r,\gamma_{1:r},\mu)}{M(r,\alpha+\gamma_{1:r},\mu)} \leq \frac{M(q,\beta_{1:q},\mu)}{M(q,\alpha+\beta_{1:q},\mu)},
    \end{equation}
    \end{enumerate}
we obtain $X_{r,\gamma_{1:r},\mu} \preceq_{icv}X_{q,\beta_{1:q},\mu}$.
\end{proposition}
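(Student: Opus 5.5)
The proof will repeat the scheme of Proposition~\ref{c2}. Lemma~\ref{t1} is replaced by Lemma~\ref{t11}. Theorem~\ref{t0} is applied with $\mathcal{H}$ the nondecreasing concave functions, which are closed under composition, and with $G=W_\alpha$ as the reference baseline. It therefore suffices to show that the generalized Pareto $m$-\gos\ satisfy
\[
Y_{r,\gamma_{1:r},\mu,\alpha}=W_\alpha^{-1}(U_{r,\gamma_{1:r},\mu}) \preceq_{icv} Y_{q,\beta_{1:q},\mu,\alpha}=W_\alpha^{-1}(U_{q,\beta_{1:q},\mu}).
\]
Then $F\preceq_c W_\alpha$ means that $F^{-1}\circ W_\alpha$ is nondecreasing and concave. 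Hence $h\circ F^{-1}\circ W_\alpha$ is nondecreasing and concave for every nondecreasing concave $h$, and (\ref{3}) transfers the ordering to $X_{r,\gamma_{1:r},\mu}$ and $X_{q,\beta_{1:q},\mu}$.

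\emph{Step 1: sign pattern.} Since $r>q$ and $\gamma_{1:r}>\beta_{1:q}$ with a common difference $\mu\ge 0$, Lemma~\ref{t11} gives the sign variation $-+-$ on $(0,1)$ for $f_{r,\gamma_{1:r},\mu}-f_{q,\beta_{1:q},\mu}$. Integrating, the difference $F_{r,\gamma_{1:r},\mu}-F_{q,\beta_{1:q},\mu}$ of the uniform distribution functions vanishes at $0$ and at $1$. It first decreases to a negative minimum, then increases, and finally decreases back to $0$. Its maximum must therefore be positive, so the difference is first negative and then positive, with a single crossing. Because $W_\alpha^{-1}$ is strictly increasing on $(0,1)$, the same single-crossing pattern holds for the distribution functions of $Y_{r,\gamma_{1:r},\mu,\alpha}$ and $Y_{q,\beta_{1:q},\mu,\alpha}$. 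This is exactly the configuration used in the proof of Proposition~\ref{c2}.

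\emph{Step 2: means.} By (\ref{23b}) applied separately to the $r$-th and $q$-th variables, the mean is $\sum_{i=1}^r 1/(\gamma_{1:r}+(i-1)\mu)$, respectively $\sum_{i=1}^q 1/(\beta_{1:q}+(i-1)\mu)$, when $\alpha=0$. For $\alpha\ne0$ it is $\frac1\alpha\bigl[1-M(r,\gamma_{1:r},\mu)/M(r,\alpha+\gamma_{1:r},\mu)\bigr]$, respectively the same expression with $(q,\beta_{1:q})$. The assumption $\alpha>-\beta_{1:q}$ in (iii) guarantees $\alpha+\beta_{1:q}>0$, and then also $\alpha+\gamma_{1:r}>0$, so both means are finite. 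Multiplying by $\alpha$, which reverses the inequality when $\alpha<0$, shows that (\ref{20a}), (\ref{21a}) and (\ref{22a}) are each equivalent to $\mathbb{E}Y_{r,\gamma_{1:r},\mu,\alpha}\le \mathbb{E}Y_{q,\beta_{1:q},\mu,\alpha}$ in the cases $\alpha=0$, $\alpha>0$ and $\alpha<0$, respectively.

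\emph{Step 3: conclusion.} Combining the single crossing with the mean inequality, Lemma~\ref{l0} is applied exactly as in Proposition~\ref{c2} to get $Y_{r,\gamma_{1:r},\mu,\alpha}\preceq_{icv}Y_{q,\beta_{1:q},\mu,\alpha}$. Theorem~\ref{t0} then yields the claim.

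\emph{Main obstacle.} The delicate point is Step 3: one must check that the crossing direction obtained in Step 1 matches the orientation required by Lemma~\ref{l0} for the $icv$ rather than the $icx$ conclusion. I would first verify this through the cut criterion. Increasing concave dominance $Y_r\preceq_{icv}Y_q$ is equivalent to $\int_{-\infty}^t\bigl(F_{Y_r}-F_{Y_q}\bigr)\ge0$ for all $t$. Given a single crossing of $F_{Y_r}-F_{Y_q}$, the total integral equals $\mathbb{E}Y_q-\mathbb{E}Y_r\ge0$. With that orientation, the partial integrals are monotone up to the crossing point, then monotone in the opposite sense, and they end at a nonnegative value; so they stay nonnegative everywhere. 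I would confirm explicitly, without relying on how the figure is drawn, that this is the orientation coming from Lemma~\ref{t11} and Step 1. The remaining steps are routine adaptations of Proposition~\ref{c2}.
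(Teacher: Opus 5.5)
Steps 1 and 2 are correct, and your route is exactly the paper's: Lemma~\ref{t11} in place of Lemma~\ref{t1}, then Lemma~\ref{l0} and Theorem~\ref{t0}. But Step 3 fails at precisely the orientation check you flagged. The paper, which only says the proof is similar to that of Proposition~\ref{c2}, makes the same slip there.

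Step 1 shows that $D=F_{Y_r}-F_{Y_q}$ is negative first and positive afterwards. Your icv criterion needs $\int_{-\infty}^t D\,dx\ge 0$ for every $t$. Since $D<0$ on an initial interval, the partial integral starts by decreasing from $0$. It is therefore strictly negative for $t$ just above the left end of the support. The argument ``monotone, then monotone the other way, ending at a nonnegative value'' only works when the integrand is positive first.

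In the language of Lemma~\ref{l0} with $X=Y_r$, $Y=Y_q$, it is $G-F=F_{Y_q}-F_{Y_r}$ that is first positive and then negative. Together with $\mathbb{E}Y_r\le\mathbb{E}Y_q$, the lemma gives $Y_r\preceq_{icx}Y_q$, not $\preceq_{icv}$. Theorem~\ref{t0} transfers an icx relation only if $F^{-1}\circ W_\alpha$ is convex, i.e.\ $F\succeq_c W_\alpha$, which is the opposite of your hypothesis.

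This cannot be patched, because the statement itself is false. Since $F_{U_r}-F_{U_q}<0$ on some $(0,u_0)$, take any continuous life distribution $F$ and any $t$ with $0<F(t)<u_0$. The increasing concave function $h_t(x)=-(t-x)^+$ then satisfies $\mathbb{E}h_t(X_{r,\gamma_{1:r},\mu})=-\int_{-\infty}^t F_{U_r}(F(x))\,dx>\mathbb{E}h_t(X_{q,\beta_{1:q},\mu})$, whatever (i)--(iii) say.

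A concrete counterexample: take $F$ uniform on $(0,1)$, which is IFR and equals $W_1$. Let $r=2$, $\gamma_{1:2}=4$, $\mu=1$, $q=1$, $\beta_{1:1}=2$, i.e.\ compare $X_{2:5}$ with $X_{1:2}$.
\begin{itemize}
\item Condition (\ref{20a}) reads $\frac14+\frac15\le\frac12$, and (\ref{21a}) with $\alpha=1$ reads $\frac23\ge\frac23$, so both hold.
\item Yet both means equal $\frac13$, and $\operatorname{Var}X_{2:5}=\frac{2}{63}<\frac{1}{18}=\operatorname{Var}X_{1:2}$.
\item With $h(x)=x-x^2/2$ for $x\le1$ and $h(x)=\frac12$ for $x>1$, this gives $\mathbb{E}h(X_{2:5})-\mathbb{E}h(X_{1:2})=\frac{1}{84}>0$.
\end{itemize}

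What your Steps 1--2 actually yield is one of two correct statements. Under $F\succeq_c W_\alpha$ with (\ref{20a})--(\ref{22a}), you get $X_{r,\gamma_{1:r},\mu}\preceq_{icx}X_{q,\beta_{1:q},\mu}$. Under $F\preceq_c W_\alpha$ with the mean inequality reversed, you get $X_{q,\beta_{1:q},\mu}\preceq_{icv}X_{r,\gamma_{1:r},\mu}$.
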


Note that, as remarked after the introduction of the generalized Pareto family (see (\ref{1b})), cases (iii) in Propositions~\ref{c2} and \ref{c3} include the DOR and IOR families, respectively, by taking $\alpha=-1$.

\begin{proposition}\label{c3a}
Assume the notation of Proposition~\ref{c1} with  $r<q$ and $\gamma_{1:r} < \beta_{1:q}$. Under either of conditions
    \begin{enumerate}[(i)]
    \item
    $F\succeq_c W_0$ ($F$ is DFR) and (\ref{20a}),

    \item
    $F\succeq_c W_\alpha$ for some $\alpha >0$ ($F$ is $\alpha$-DGFR) and (\ref{21a}),

    \item
    $F\succeq_c W_\alpha$ for some $-\gamma_{1:r} < \alpha <0$  ($F$ is $\alpha$-DGFR) and (\ref{22a}),
    \end{enumerate}
we get $X_{r,\gamma_{1:r},\mu} \preceq_{icx}X_{q,\beta_{1:q},\mu}$. 
\end{proposition}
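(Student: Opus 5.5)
The plan is to follow the scheme of Proposition~\ref{c3}, but to use Lemma~\ref{t11} in place of Lemma~\ref{t1}. The argument has three stages: establish the crossing pattern of the uniform distribution functions, transfer it to the generalized Pareto baseline and add the mean comparison to get the increasing convex order there, and finally move to the baseline $F$ via Theorem~\ref{t0}.

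First, apply Lemma~\ref{t11} with the roles of the two $m$-\gos\ interchanged, so that its hypotheses on the minimal parameters are met. Here $q>r$ and $\beta_{1:q}>\gamma_{1:r}$, so Lemma~\ref{t11} gives that $f_{q,\beta_{1:q},\mu}-f_{r,\gamma_{1:r},\mu}$ has sign variation $-+-$ on $(0,1)$. Integrating, $F_{q,\beta_{1:q},\mu}-F_{r,\gamma_{1:r},\mu}$ vanishes at $0$ and $1$. It first decreases to a negative minimum, then increases, then decreases back to $0$, so its maximum must be positive. Hence this difference is first negative and then positive. Equivalently, $F_{r,\gamma_{1:r},\mu}-F_{q,\beta_{1:q},\mu}$ is first positive and then negative on $(0,1)$. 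Here I must make sure the Lemma covers this orientation: in its $r<q$ branch it asserts $+-$ for the density difference, and that branch addresses $\gamma_{1:r}>\beta_{1:q}$. So I rely on the $r>q$ branch with the labels swapped, which is exactly our situation.

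Next, transfer to the baseline $W_\alpha$. Since $W_\alpha^{-1}$ is increasing, $Y_{r,\gamma_{1:r},\mu,\alpha}=W_\alpha^{-1}(U_{r,\gamma_{1:r},\mu})$ and $Y_{q,\beta_{1:q},\mu,\alpha}$ have distribution functions with the same crossing pattern. In the notation of Lemma~\ref{l0}, with $X=Y_{r,\gamma_{1:r},\mu,\alpha}$ and $Y=Y_{q,\beta_{1:q},\mu,\alpha}$, the difference $G-F$ of the distribution function of $Y$ minus that of $X$ is first negative and then positive. This is not literally the hypothesis of the icx part of Lemma~\ref{l0}, which asks for $G-F$ to be first positive and then negative. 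So I will appeal to the standard single-crossing (Karlin--Novikoff) criterion behind that lemma, which reads as follows. If $F-G$ changes sign once from $+$ to $-$ and $\mathbb{E}X\le \mathbb{E}Y$, then $X\preceq_{icx}Y$. This is because $\int_t^\infty (\bar G-\bar F)$ is then nonnegative for all $t$. I expect the main delicate point to be precisely this bookkeeping of the crossing direction, namely checking that the pattern yields $\preceq_{icx}$ and not $\preceq_{icv}$. To double-check, note that $X$ is concentrated to the left near $0$ and $Y$ is to the right. Then $Y$ has the heavier upper tail, consistent with icx, and the integrated survival tail comparison works once the means are ordered.

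For the means, use (\ref{23b}), which gives $\mathbb{E}Y_{r,\gamma_{1:r},\mu,\alpha}=\sum_{i=1}^r(\gamma_{1:r}+(i-1)\mu)^{-1}$ for $\alpha=0$, and $\frac1\alpha[1-M(r,\gamma_{1:r},\mu)/M(r,\alpha+\gamma_{1:r},\mu)]$ otherwise, and similarly for $q,\beta_{1:q}$. Then (\ref{20a}), (\ref{21a}) and (\ref{22a}) are respectively equivalent to $\mathbb{E}Y_{r,\gamma_{1:r},\mu,\alpha}\le\mathbb{E}Y_{q,\beta_{1:q},\mu,\alpha}$ for $\alpha=0$, $\alpha>0$ and $\alpha<0$. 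The sign of $1/\alpha$ flips the ratio inequality in the last two cases. The restriction $\alpha>-\gamma_{1:r}$, which is the smaller minimal parameter, guarantees finiteness of both expectations. This yields $Y_{r,\gamma_{1:r},\mu,\alpha}\preceq_{icx}Y_{q,\beta_{1:q},\mu,\alpha}$.

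Finally, $F\succeq_c W_\alpha$ means $F^{-1}\circ W_\alpha$ is nondecreasing convex. The class of nondecreasing convex functions is closed under composition, so Theorem~\ref{t0} with $G=W_\alpha$ gives $X_{r,\gamma_{1:r},\mu}\preceq_{icx}X_{q,\beta_{1:q},\mu}$.
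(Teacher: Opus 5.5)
The step that fails is exactly the one you flagged as delicate. You apply Lemma~\ref{t11} correctly, with the roles swapped. It tells you that $F_{r,\gamma_{1:r},\mu}-F_{q,\beta_{1:q},\mu}$ is first positive and then negative. In the notation of Lemma~\ref{l0}, with $X=Y_{r,\gamma_{1:r},\mu,\alpha}$, this says $F-G$ is $+-$.

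The ``Karlin--Novikoff'' criterion you then state is false. Let $t_0$ be the crossing point and write $\bar F=1-F$, $\bar G=1-G$. For $t>t_0$ the integrand in $\int_t^\infty(\bar G-\bar F)\,du=\int_t^\infty(F-G)\,du$ is negative. So the integral is negative, i.e.\ $\mathbb{E}(X-t)_+>\mathbb{E}(Y-t)_+$.

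Lemma~\ref{l0} is correct as stated in the paper. This crossing together with $\mathbb{E}X\le\mathbb{E}Y$ yields $X\preceq_{icv}Y$, whereas $\preceq_{icx}$ needs the opposite crossing. Your tail heuristic is reversed as well. $F_X<F_Y$ on the right means $1-F_X>1-F_Y$ there. So the variable with fewer factors and the smaller minimal parameter carries more mass at both ends.

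The paper's own route (``similar to Proposition~\ref{c2}'') does not get around this. In the proof of Proposition~\ref{c2}, the difference $F_{r,\gamma_{1:r},\mu}-F_{r,\beta_{1:r},\nu}$ is shown to be negative-then-positive. The icv part of Lemma~\ref{l0}, which requires the opposite pattern, is nevertheless invoked.

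In fact no proof can succeed, because the statement is false as written. Take $\mu=0$, $r=1$, $q=2$, $\gamma_{1:1}=1$, $\beta_{1:2}=2$ and $F=W_0$. This $F$ satisfies $F\succeq_c W_0$, since $F^{-1}\circ W_0$ is the identity. Condition (\ref{20a}) reads $1\le\frac12+\frac12$. Here $X_{1,1,0}$ is standard exponential and $X_{2,2,0}\stackrel{d}{=}(E_1+E_2)/2$, with $E_1,E_2$ i.i.d.\ standard exponential. Hence
\[
\mathbb{E}(X_{1,1,0}-t)_+=e^{-t}>(1+t)e^{-2t}=\mathbb{E}(X_{2,2,0}-t)_+,\qquad t>0,
\]
and $X_{1,1,0}\not\preceq_{icx}X_{2,2,0}$. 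Equivalently, the means are equal but the variances are $1>\frac12$.

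This also contradicts Corollary~\ref{cor2} with $n=k=1$, $m=j=2$. More generally, with exponential baseline, $\gamma_{1:r}<\beta_{1:q}$ always gives $X_{r,\gamma_{1:r},\mu}$ the heavier exponential tail, so the icx conclusion never holds there.

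What your first two stages plus Theorem~\ref{t0} actually prove is the corrected statement, with the same ranges of $\alpha$:
\begin{itemize}
\item if $F\preceq_c W_\alpha$ and the corresponding condition among (\ref{20a})--(\ref{22a}) holds, then $X_{r,\gamma_{1:r},\mu}\preceq_{icv}X_{q,\beta_{1:q},\mu}$;
\item if $F\succeq_c W_\alpha$ and the reversed inequality holds, then $X_{q,\beta_{1:q},\mu}\preceq_{icx}X_{r,\gamma_{1:r},\mu}$.
\end{itemize}
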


\begin{remark}\label{rem2}
The statements of Propositions \ref{c2}--\ref{c3a} can easily be extended. 
For instance, if we take the assumptions of Proposition \ref{c2}, and $X_{p, \tilde{\gamma}_p}$ and $X_{s,\tilde{\beta}_s}$ as in Remark~\ref{rem1} with the modification that the respective baseline distribution functions $F$ and $G$ satisfy 
$F \preceq_{icx} G$ then $X_{p, \tilde{\gamma}_p} \preceq_{icx} X_{s,\tilde{\beta}_s}$.
\end{remark}

We now prove $\preceq_{icx}$ and $\preceq_{icv}$ results assuming the monotonicity of the generalized reverse failure rate. For this purpose, we will be comparing the baseline distribution with the negative generalized Pareto distributions $\widetilde{W}_\alpha$.
\begin{proposition}
	\label{prop:neg}
	Let $X_{r,\gamma_{1:r},\mu}$ and $X_{q,\beta_{1:q},\nu}$ be {$m$-\gos} based on the same baseline $F$.
	\begin{enumerate}[(a)]
    \item
    Assume that parameter conditions of Proposition~\ref{c2} (when $q=r$) or of Proposition~\ref{c2a} (when $\mu=\nu$) hold. Suppose that $F\preceq_c \widetilde W_\alpha$ for some $\alpha\in\mathbb R$.
	\begin{enumerate}[(i)]
		\item If $\alpha=0$ and
		\[
		\sum_{\ell=1}^\infty \frac{1}{\ell}\,
\frac{M(r,\gamma_{1:r},\mu)}{M(r,\ell+\gamma_{1:r},\mu)}
		\ \ge\
		\sum_{\ell=1}^\infty \frac{1}{\ell}\,\frac{M(q,\beta_{1:q},\nu)}{M(q,\ell+\beta_{1:q},\nu)},
		\]
		then $X_{r,\gamma_{1:r},\mu}\preceq_{icv}X_{q,\beta_{1:q},\nu}$.
		
		\item If $\alpha>0$ and
		\[
		\sum_{\ell=0}^\infty
        \binom{\alpha}{\ell}(-1)^\ell\,\frac{M(r,\gamma_{1:r},\mu)}{M(r,\ell+\gamma_{1:r},\mu)} \ \le\
		\sum_{\ell=0}^\infty \binom{\alpha}{\ell}(-1)^\ell\, 
        \frac{M(q,\beta_{1:q},\nu)}{M(q,\ell+\beta_{1:q},\nu)},
		\]
		then $X_{r,\gamma_{1:r},\mu}\preceq_{icv}X_{q,\beta_{1:q},\nu}$.
		
		\item If $\alpha<0$ with $\alpha>-1$ and
		\[
		\sum_{\ell=0}^\infty \binom{\alpha}{\ell}(-1)^\ell\,
        \frac{M(r,\gamma_{1:r},\mu)}{M(r,\ell+\gamma_{1:r},\mu)}
		\ \ge\
		\sum_{\ell=0}^\infty \binom{\alpha}{\ell}(-1)^\ell\,
        \frac{M(q,\beta_{1:q},\nu)}{M(q,\ell+\beta_{1:q},\nu)},
		\]
		then $X_{r,\gamma_{1:r},\mu}\preceq_{icv}X_{q,\beta_{1:q},\nu}$.
	   \end{enumerate}

    \item
    Assume now that the parameter conditions of Proposition~\ref{c3} (when $q=r$) or of Proposition~\ref{c3a} (when $\mu=\nu$) hold. If $F\succeq_c \widetilde W_\alpha$, then conditions (i), (ii), (iii) above imply $X_{r,\gamma_{1:r},\mu}\preceq_{icv}X_{q,\beta_{1:q},\nu}$.
\end{enumerate}
\end{proposition}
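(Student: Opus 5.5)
The plan is to mirror the proofs of Propositions~\ref{c2}--\ref{c3a}, with the generalized Pareto baseline $W_\alpha$ replaced by the negative generalized Pareto baseline $\widetilde W_\alpha$. The argument has four steps: sign variation of the uniform distribution functions, transfer to the $\widetilde W_\alpha$ baseline, computation of means, and an application of Lemma~\ref{l0} together with Theorem~\ref{t0}. Part~(b) raises a direction issue, flagged in the last paragraph, that I have not resolved.

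\emph{Step 1: sign variation.} Under the parameter conditions of Proposition~\ref{c2} (case $q=r$), the density difference has sign variation $-+-$ by Lemma~\ref{t1}. Under those of Proposition~\ref{c2a} (case $\mu=\nu$), the same holds by Lemma~\ref{t11}. In part~(b) the roles of the two vectors are swapped (conditions of Propositions~\ref{c3}, \ref{c3a}), so the difference of densities has the opposite pattern $+-+$. Integrating as in the proof of Proposition~\ref{c2} gives a single crossing of $F_{r,\gamma_{1:r},\mu}-F_{q,\beta_{1:q},\nu}$ on $(0,1)$: negative then positive in part~(a), and positive then negative in part~(b).

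\emph{Step 2: transfer to the $\widetilde W_\alpha$ baseline.} Since $\widetilde W_\alpha^{-1}(u)=-W_\alpha^{-1}(1-u)$ is increasing, the $m$-\gos\ $\widetilde Y=\widetilde W_\alpha^{-1}(U)$ inherit exactly the same crossing pattern of their distribution functions.

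\emph{Step 3: means.} Put $V=1-U_{r,\tilde\gamma_r}\stackrel{d}{=}\prod_i U_i^{1/\gamma_i}$, so that $\mathbb{E}V^\ell=M(r,\gamma_{1:r},\mu)/M(r,\ell+\gamma_{1:r},\mu)$. For $\alpha=0$ we have $\widetilde Y=\ln V$, and
\[
\mathbb{E}\widetilde Y=-\sum_{\ell\ge1}\frac{1}{\ell}\,\frac{M(r,\gamma_{1:r},\mu)}{M(r,\ell+\gamma_{1:r},\mu)}.
\]
For $\alpha\neq0$ we have $\widetilde Y=(V^\alpha-1)/\alpha$. Expanding $V^\alpha=(1-(1-V))^\alpha$ binomially gives $\mathbb{E}\widetilde Y=(S_\gamma-1)/\alpha$, where
\[
S_\gamma=\sum_{\ell\ge0}\binom{\alpha}{\ell}(-1)^\ell\,\mathbb{E}(1-V)^\ell .
\]
I must still rewrite $\mathbb{E}(1-V)^\ell$ in terms of the ratios $M(\cdot)/M(\cdot)$ exactly as written in the statement. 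Termwise integration needs care (monotone convergence for $\alpha=0$, absolute convergence of the binomial series for $\alpha>0$ and $-1<\alpha<0$). Consequently each of the displayed inequalities (i), (ii), (iii) is equivalent to $\mathbb{E}\widetilde Y_{r,\gamma_{1:r},\mu,\alpha}\le \mathbb{E}\widetilde Y_{q,\beta_{1:q},\nu,\alpha}$. The inequality direction flips for $\alpha<0$ because of the factor $1/\alpha$.

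\emph{Step 4: conclusion and expected obstacle.} Lemma~\ref{l0} applied to the $\widetilde Y$'s yields the integral order between them, and Theorem~\ref{t0} transfers it to the baseline $F$. The transform assumption used there is $F\preceq_c\widetilde W_\alpha$, i.e.\ $F^{-1}\circ\widetilde W_\alpha$ concave, for the concave class, and the reversed assumption in~(b). The main obstacle is the orientation bookkeeping in part~(b): making the crossing direction from Step~1, the branch of Lemma~\ref{l0}, and the closure class $\mathcal H$ in Theorem~\ref{t0} all point to $\preceq_{icv}$ as stated. I would first follow exactly the convention the paper uses when passing from Lemma~\ref{t1} to Propositions~\ref{c2}--\ref{c3}. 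On the face of it, that convention yields $\preceq_{icx}$ rather than $\preceq_{icv}$ in part~(b): the crossing is reversed, and $F\succeq_c\widetilde W_\alpha$ makes $F^{-1}\circ\widetilde W_\alpha$ convex, the class suited to $\preceq_{icx}$. So the compatibility of part~(b) with its stated $\preceq_{icv}$ conclusion must be checked carefully against that convention before the argument can be completed as worded.
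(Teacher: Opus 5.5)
Your overall scheme (sign pattern, transfer to the $\widetilde W_\alpha$ baseline, means by series expansion, then Lemma~\ref{l0} and Theorem~\ref{t0}) is the one the paper uses. However, Step~3 works with the wrong random variable. The map $\widetilde W_\alpha^{-1}(u)=(u^\alpha-1)/\alpha$ (resp.\ $\ln u$ for $\alpha=0$) is applied to $U_{r,\tilde{\gamma}_r}=1-V$. Hence $\widetilde Y=((1-V)^\alpha-1)/\alpha$ (resp.\ $\ln(1-V)$), not $(V^\alpha-1)/\alpha$ (resp.\ $\ln V$). Your $(V^\alpha-1)/\alpha$ equals $-W_\alpha^{-1}(U_{r,\tilde{\gamma}_r})$, which is a decreasing function of $U_{r,\tilde{\gamma}_r}$. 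Your $\alpha=0$ display is right only because it is actually the series of $\ln(1-V)$. The step you left open for $\alpha\neq 0$, rewriting $\mathbb{E}(1-V)^\ell$ as a ratio of $M$'s, cannot be carried out: these are moments of $U_{r,\tilde{\gamma}_r}$ itself, e.g.\ $\mathbb{E}(1-V)=1-M(r,\gamma_{1:r},\mu)/M(r,1+\gamma_{1:r},\mu)$. The paper instead expands $(1-V)^\alpha=\sum_{\ell\geq 0}\binom{\alpha}{\ell}(-1)^\ell V^\ell$ and $\ln(1-V)=-\sum_{\ell\geq 1}V^\ell/\ell$ in powers of $V$. It then uses independence, $\mathbb{E}V^\ell=\prod_{i=1}^r\gamma_{i:r}/(\gamma_{i:r}+\ell)=M(r,\gamma_{1:r},\mu)/M(r,\ell+\gamma_{1:r},\mu)$. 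With this, each of (i)--(iii) is equivalent to $\mathbb{E}\widetilde Y_{r,\gamma_{1:r},\mu,\alpha}\leq\mathbb{E}\widetilde Y_{q,\beta_{1:q},\nu,\alpha}$.

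The orientation problem you flag is genuine, but it already arises in (a), and in (b) you place it on the wrong side.

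In (a), Step~1 makes the distribution function of $\widetilde Y_{r,\gamma_{1:r},\mu,\alpha}$ minus that of $\widetilde Y_{q,\beta_{1:q},\nu,\alpha}$ negative then positive. So $G-F$ is positive then negative in Lemma~\ref{l0}, and with the mean inequality the lemma gives $\preceq_{icx}$, not $\preceq_{icv}$. But $\preceq_{icx}$ cannot be pushed through the concave map $F^{-1}\circ\widetilde W_\alpha$ supplied by $F\preceq_c\widetilde W_\alpha$.

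In (b), the crossing is positive then negative, so Lemma~\ref{l0} gives $\preceq_{icv}$ (not $\preceq_{icx}$). Yet the convex map supplied by $F\succeq_c\widetilde W_\alpha$ transfers only $\preceq_{icx}$. In both parts the two ingredients point in opposite directions.

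The paper's proof is exactly your scheme, with (b) declared analogous. It inherits the step from the proof of Proposition~\ref{c2} where a negative-then-positive crossing is turned into $\preceq_{icv}$, so following that convention cannot close the argument. In fact (a)(i) fails as stated. Take $F=\widetilde W_0$, i.e.\ $F(x)=e^x$ for $x<0$, and compare $X_{2:4}$ ($r=2$, $\gamma_{1:2}=3$, $\mu=1$) with $X_{1:1}$ ($q=1$, $\beta_{1:1}=1$). The conditions of Proposition~\ref{c2a} hold and (i) reads $\frac{13}{12}\geq 1$. However, the distribution functions are $6e^{2t}-8e^{3t}+3e^{4t}$ and $e^t$. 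For $c=\ln 0.1$ this gives $\mathbb{E}\min(X_{2:4},c)=c-0.0274\ldots>c-0.1=\mathbb{E}\min(X_{1:1},c)$.

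What your argument proves, once Step~3 is repaired, is the statement with the transform hypotheses exchanged. Under the parameter conditions of (a), $F\succeq_c\widetilde W_\alpha$ together with (i)--(iii) gives $X_{r,\gamma_{1:r},\mu}\preceq_{icx}X_{q,\beta_{1:q},\nu}$. Under those of (b), $F\preceq_c\widetilde W_\alpha$ together with (i)--(iii) gives $X_{r,\gamma_{1:r},\mu}\preceq_{icv}X_{q,\beta_{1:q},\nu}$.
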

\begin{proof}	
    We prove part \textit{(a)}, as the second part is analogous using the second conclusion in Corollary~\ref{l0}. 
    Recall that $\gamma_{i:r}=\gamma_{1:r}+(i-1)\mu$. Define, for $\alpha\neq0,$
	$$
	\widetilde Y_{r,\gamma_{1:r},\mu,\alpha}
	:=\widetilde
    W_\alpha^{-1}\!\bigl(U_{r,\gamma_{1:r},\mu}\bigr)=\frac{U_{r,\gamma_{1:r},\mu}^{\alpha}-1}{\alpha}.
	$$
	Using the representation of the uniform $m$-\gos,
	$U_{r,\gamma_{1:r},\mu}\ \stackrel d=\ 
    1-\prod_{i=1}^r U_i^{1/\gamma_{i:r}}$,
	where $U_i$ are i.i.d.\ uniformly distributed in $[0,1]$.
    Then, for every $\alpha$ for which the expectation exists, bearing in mind that a product of uniform random variables is always in $[0,1],$ the binomial theorem gives
	$$
    \mathbb E U_{r,\gamma_{1:r},\mu}^{\alpha}
    =\sum_{\ell=0}^{\infty}\binom{\alpha}{\ell}(-1)^{\ell}\,\mathbb E\bigg[\prod_{i=1}^r U_i^{\ell/\gamma_{i:r}}\bigg].
    $$
    (For case (iii), $\alpha>-1$ ensures that $\mathbb{E} U_{r,\gamma_{1:r},\mu}<\infty$, as required to apply Lemma~\ref{l0}.) By independence,
	$$
	\mathbb E\bigg[\prod_{i=1}^r U_i^{\ell/\gamma_{i:r}}\bigg]
	=\prod_{i=1}^r \mathbb E\bigg[U_i^{\ell/\gamma_{i:r}}\bigg]
    =\frac{M(r,\gamma_{1:r},\mu)}{M(r,\ell+\gamma_{1:r},\mu)},
	$$
	which gives
	\begin{equation}
    \label{eq:expectation}\mathbb E U_{r,\gamma_{1:r},\mu}^{\alpha}
	=\sum_{\ell=0}^{\infty}\binom{\alpha}{\ell}(-1)^{\ell}\,
	\frac{M(r,\gamma_{1:r},\mu)}{M(r,\ell+\gamma_{1:r},\mu)}.
    \end{equation}
	Finally, we obtain
	$$
	\mathbb E \widetilde Y_{r,\gamma_{1:r},\mu,\alpha}
	=\frac{1}{\alpha}\left(
	\sum_{\ell=0}^{\infty}\binom{\alpha}{\ell}(-1)^{\ell}\,
	\frac{M(r,\gamma_{1:r},\mu)}{M(r,\ell+\gamma_{1:r},\mu)}
    -\;1\right),
	\qquad \alpha\neq 0.
    $$
    In the special case $\alpha\in\mathbb N$, the sum ends at $\ell=\alpha$.
	
    \smallskip
    
	For $\alpha=0$, $\widetilde Y_{r,\gamma_{1:r},\mu,0}
	=\ln U_{r,\gamma_{1:r},\mu}.$
	In this case we use the power series expansion
	$$
    \ln(1-z)=-\sum_{\ell=1}^\infty \frac{z^\ell}{\ell},\qquad 0<z<1,
    $$
	which yields
\begin{equation}
\label{eq:expectation0}	
\mathbb E\Bigl[\ln U_{r,\gamma_{1:r},\mu}\Bigr]
	=\mathbb E\left[\ln\left(1-\prod_{i=1}^r U_i^{1/\gamma_{i:r}}\right)\right]
	=-\sum_{\ell=1}^\infty \frac{1}{\ell}\,
    \mathbb{E}\bigg[\prod_{i=1}^r U_i^{\ell/\gamma_{i:r}}\bigg].
\end{equation}
	The conclusion follows similarly to the case $\alpha\neq0$.
\end{proof}
\begin{remark}
One can easily check that the expectations of 
$m$-\gos\ for negative generalized Pareto baseline distributions with 
negative parameter $\alpha$ are finite for $\alpha >-1$ when $\mu>0$
and $\alpha >-r$ when $\mu=0$. 
We show that the series in (\ref{eq:expectation}) and (\ref{eq:expectation0}) converge under these conditions and so they
represent the respective expectations.
Note that
$$
\frac{M(r,\gamma_{1:r},\mu)}{M(r,\ell+\gamma_{1:r},\mu)}
=\prod_{i=1}^r \frac{\gamma_{1:r}+(i-1)\mu}{\ell+\gamma_{1:r}+(i-1)\mu}\leq	\left(\prod_{i=1}^r(\gamma_{1:r}+(i-1)\mu)\right)\ell^{-r}.
$$
Moreover, the extended binomial coefficients may be represented as
$$
\binom{\alpha}{\ell}=(-1)^\ell\,\frac{\Gamma(\ell-\alpha)}{\Gamma(-\alpha)\,\Gamma(\ell+1)},
$$
hence $\bigl|\binom{\alpha}{\ell}\bigr|\sim\ell^{-\alpha-1}$, implying that the general term in the expansion (\ref{eq:expectation}) behaves like $\ell^{-(r+\alpha+1)}$, thus it is convergent when $\alpha>-r$ (and similarly $\alpha>-q$, when considering $\mathbb E U_{q,\gamma_{1:q},\nu}^{\alpha}$), which is satisfied as $r,q\geq 1$.

In a similar way we show that the summands in (\ref{eq:expectation0}) are of order $\ell^{-(r+1)}$.
\end{remark}

Conditions for the $\preceq_{icx}$ and $\preceq_{icv}$ ordering of the $m$-\gos\ for the class of distributions with monotone odds rate may also be characterized, as this monotonocity is expressed through the convex transform ordering with respect to $L(x)=(1+e^{-x})^{-1}$, for $x\in\mathbb{R}$, the logistic distribution function. Indeed, a distribution $F$ has increasing (decreasing) log-odds rate, ILOR for short (DLOR, respectively) if $L\succeq_c (\preceq_c)F$. 

\begin{proposition}
\label{prop:log-odds}
Let $X_{r,\gamma_{1:r}, \mu}$ and  $X_{r,\beta_{1:r},\nu}$, denote the $r$-th $m$-\gos\ with minimal parameters  $\gamma_{1:r}$ and $\beta_{1:r}$, respectively, identical common differences $\mu$ and $\nu$, respectively, and common baseline distribution function $F$. Recall that $\gamma_{i:r}=\gamma_{1:r}+(i-1)\mu$ and $\beta_{i:r}=\beta_{1:r}+(i-1)\nu$.
    \begin{enumerate}[(i)]
    \item
    If $F \preceq_c L$ ($F$ is ILOR),
    $\gamma_{1:r}\geq\beta_{1:r}$, $\frac{M(r,\gamma_{1:r},\mu)}{M(r,\beta_{1:r},\nu)} < 1$ and
    \begin{equation}
    \label{eq:icx-ior}
    \sum_{i=1}^r \left(\frac{1}{\gamma_{i:r}}-\frac{1}{\beta_{i:r}}\right)
    \leq
    \sum_{\ell=1}^\infty\frac1\ell\left(
    \frac{M(r,\gamma_{1:r},\mu)}{M(r,\ell+\gamma_{1:r},\mu)}
    -
    \frac{M(r,\beta_{1:r},\mu)}{M(r,\ell+\beta_{1:r},\mu)}\right).
    \end{equation}
    Then $X_{r,\gamma_{1:r}, \mu} \leq_{icv} X_{r,\beta_{1:r},\nu}$.
    \item
    If $F \succeq_c L$ ($F$ is DLOR), 
    $\gamma_{1:r}\geq\beta_{1:r}$, 
    $\frac{M(r,\gamma_{1:r},\mu)}{M(r,\beta_{1:r},\nu)} < 1$ and
    \begin{equation}
    \label{eq:icv-dor}
    \sum_{i=1}^r \left(\frac{1}{\gamma_{i:r}}-\frac{1}{\beta_{i:r}}\right)
    \geq
    \sum_{\ell=1}^\infty\frac1\ell\left(
    \frac{M(r,\gamma_{1:r},\mu)}{M(r,\ell+\gamma_{1:r},\mu)}
    -
    \frac{M(r,\beta_{1:r},\mu)}{M(r,\ell+\beta_{1:r},\mu)}\right).
    \end{equation}
    Then $X_{r,\gamma_{1:r}, \mu} \leq_{icx} X_{r,\beta_{1:r},\nu}$.
    \end{enumerate}
\end{proposition}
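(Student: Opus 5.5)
The plan follows the pattern of Propositions~\ref{c2} and \ref{c3}: apply Theorem~\ref{t0} with $G=L$ and $\mathcal H$ the family of nondecreasing convex (for (i)) or concave (for (ii)) functions. Both families are closed under composition. The condition $F\preceq_c L$ means $F^{-1}\circ L$ is convex, so the reference variables are the $m$-\gos\ with logistic baseline,
\[
Z_{r,\gamma_{1:r},\mu}=L^{-1}(U_{r,\gamma_{1:r},\mu})=\ln U_{r,\gamma_{1:r},\mu}-\ln(1-U_{r,\gamma_{1:r},\mu}),
\]
and similarly $Z_{r,\beta_{1:r},\nu}$. It then suffices to establish the appropriate integral order between $Z_{r,\gamma_{1:r},\mu}$ and $Z_{r,\beta_{1:r},\nu}$ by means of Lemma~\ref{l0}, which needs a sign-change condition on the distribution functions and a comparison of expectations.

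\textbf{Step 1: sign variation.} Since $L^{-1}$ is increasing, the difference of the distribution functions of the $Z$'s has the same sign pattern as $F_{r,\gamma_{1:r},\mu}-F_{r,\beta_{1:r},\nu}$ on $(0,1)$. Under $\gamma_{1:r}>\beta_{1:r}$ and $M(r,\gamma_{1:r},\mu)/M(r,\beta_{1:r},\nu)<1$, case (ii) of Lemma~\ref{t1} gives density difference $-+-$. The case $\mu=0<\nu$ falls under (iv), since the ratio $<1$ forces $\beta_{r:r}>\gamma_{1:r}$. As in the proof of Proposition~\ref{c2}, the difference of distribution functions is then first negative and then positive.

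The boundary case $\gamma_{1:r}=\beta_{1:r}$ is not covered by Lemma~\ref{t1}. There the ratio $<1$ forces $\nu>\mu$, so $\beta_{i:r}\ge\gamma_{i:r}$ for all $i$. The argument of Corollary~\ref{c1} with the roles exchanged then gives the usual stochastic order in the direction needed for (i), or I would just note that the sign pattern degenerates.

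\textbf{Step 2: expectations.} Write $\mathbb E Z=\mathbb E\ln U_{r}-\mathbb E\ln(1-U_r)$, where $U_r=U_{r,\gamma_{1:r},\mu}$. From \eqref{1}, $1-U_r=\prod_i U_i^{1/\gamma_{i:r}}$, so
\[
-\mathbb E\ln(1-U_r)=\sum_{i=1}^r\frac1{\gamma_{i:r}}.
\]
By \eqref{eq:expectation0},
\[
\mathbb E\ln U_r=-\sum_{\ell\ge1}\frac1\ell\,\frac{M(r,\gamma_{1:r},\mu)}{M(r,\ell+\gamma_{1:r},\mu)},
\]
and the earlier Remark guarantees this series converges. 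Hence $\mathbb EZ_{r,\gamma_{1:r},\mu}\le\mathbb EZ_{r,\beta_{1:r},\nu}$ is equivalent, after rearrangement, to the inequality \eqref{eq:icx-ior}. I would read the second $M$-ratio on its right-hand side as having common difference $\nu$.

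\textbf{Step 3: conclusion, and the main obstacle.} Lemma~\ref{l0} requires the distribution-function difference to be $-+$ for icv and $+-$ for icx. Step 1 yields only one of these patterns, while (i) and (ii) require opposite integral orders from the same sign structure. The delicate point is matching these. With $-+$ for $F_\gamma-F_\beta$ and $\mathbb E Z_\gamma\le \mathbb E Z_\beta$, Lemma~\ref{l0} gives $Z_\gamma\preceq_{icv}Z_\beta$. For icx, $F_\beta-F_\gamma$ must be $-+$ instead, or I would reverse the expectation inequality.

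I expect to resolve this by checking which direction is consistent with the convexity direction. $F\preceq_c L$ means $F^{-1}\circ L$ is convex, so the icx family is the one closed under the transformation, not the icv family. This is why the labels in (i) and (ii) differ from those in Proposition~\ref{c2}.

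Concretely, in (i) I would apply Theorem~\ref{t0} with the convex family only if Lemma~\ref{l0} yields icx. Otherwise I would pass to the reflected representation $-Z$, since the logistic law is symmetric, which exchanges the roles of $\ln U$ and $\ln(1-U)$ and converts the pattern. Confirming this direction bookkeeping is the step I would verify most carefully; the remaining steps are routine.
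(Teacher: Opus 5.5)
You follow the same route as the paper: Theorem~\ref{t0} with $G=L$, the crossing pattern from Lemma~\ref{t1}, Lemma~\ref{l0}, and the series for $\mathbb E L^{-1}(U_{r,\gamma_{1:r},\mu})$. Your Steps~1--2 agree with the paper, and your remark that $\mu=0<\nu$ falls under case (iv) correctly refines the paper's appeal to case (ii) alone. The gap is Step~3, which you leave open and which rests on two reversed directions.

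\begin{itemize}
\item In this paper $F\succeq_c G$ means that $F^{-1}\circ G$ is convex. So $F\preceq_c L$ means that $L^{-1}\circ F$ is convex, i.e.\ $F^{-1}\circ L$ is increasing \emph{concave}. Under ILOR, Theorem~\ref{t0} must therefore be used with the icv family, exactly as IFR is paired with icv in Proposition~\ref{c2}.
\item In Lemma~\ref{l0} with $X=Z_\gamma$, $Y=Z_\beta$, icv requires $F_{Z_\gamma}-F_{Z_\beta}$ to be $+-$, while icx requires it to be $-+$. Your pattern $-+$ together with $\mathbb EZ_\gamma\le\mathbb EZ_\beta$ therefore gives $Z_\gamma\preceq_{icx}Z_\beta$, not $\preceq_{icv}$.
\end{itemize}

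Your boundary remark is also reversed: $\gamma_{1:r}=\beta_{1:r}$ and $\nu>\mu$ give $X_{r,\beta_{1:r},\nu}\preceq_{st}X_{r,\gamma_{1:r},\mu}$, not the direction (i) needs. Once both directions are corrected, the mismatch you sensed is real: under ILOR you need icv, but Lemma~\ref{l0} delivers only icx. Passing to $-Z$ cannot help, since $X\preceq_{icv}Y$ iff $-Y\preceq_{icx}-X$ just restates the same claim.

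In fact neither part of the statement holds.

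\textbf{Part (i).} By (\ref{2a}), $F_{U_{r,\gamma_{1:r},\mu}}(x)\sim M(r,\gamma_{1:r},\mu)\,x^r/r!$ as $x\downarrow0$. So the hypothesis $M(r,\gamma_{1:r},\mu)<M(r,\beta_{1:r},\nu)$ forces $F_{X_{r,\gamma_{1:r},\mu}}<F_{X_{r,\beta_{1:r},\nu}}$ just to the right of the left end of the support of a continuous $F$. For such $t$,
$\mathbb E\min(X_{r,\gamma_{1:r},\mu},t)-\mathbb E\min(X_{r,\beta_{1:r},\nu},t)=\int_{-\infty}^t\bigl(F_{X_{r,\beta_{1:r},\nu}}-F_{X_{r,\gamma_{1:r},\mu}}\bigr)(s)\,ds>0$.
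Since $x\mapsto\min(x,t)$ is nondecreasing and concave, this contradicts the icv conclusion of (i) whenever the expectations are finite. All hypotheses of (i) are met, for example, by $F=L$, $r=2$, $\tilde\gamma_2=(2,2.5)$, $\tilde\beta_2=(1,11)$. Here the $M$-ratio is $5/11$ and $\mathbb EZ_\gamma\approx0.12<0.29\approx\mathbb EZ_\beta$.

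\textbf{Part (ii).} Take $F=L$, $\tilde\gamma_2=(1.1,1.1)$, $\tilde\beta_2=(1,2)$. Then $\mathbb EZ_\gamma\approx1.42>1=\mathbb EZ_\beta$, so (\ref{eq:icv-dor}) holds. But $Z_\gamma\preceq_{icx}Z_\beta$ would force $\mathbb EZ_\gamma\le\mathbb EZ_\beta$.

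The paper's proof makes exactly the step you could not justify: it reads the $-+$ pattern as giving icv, as it also does in the proof of Proposition~\ref{c2}. What the argument really proves is the following:
\begin{itemize}
\item if $F\succeq_c L$ (DLOR) and (\ref{eq:icx-ior}) holds, then $X_{r,\gamma_{1:r},\mu}\preceq_{icx}X_{r,\beta_{1:r},\nu}$;
\item if $F\preceq_c L$ (ILOR) and (\ref{eq:icv-dor}) holds, then $X_{r,\beta_{1:r},\nu}\preceq_{icv}X_{r,\gamma_{1:r},\mu}$.
\end{itemize}
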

\begin{proof}
This is again a consequence of combining Theorem~\ref{t0} with Lemma~\ref{t1}. In the case (i), $F\preceq_c L$, hence the proof reduces to comparing $\mathbb{E}(L^{-1}(U_{r,\gamma_{1:r},\mu}))$ and $\mathbb{E}(L^{-1}(U_{r,\beta_{1:r},\nu}))$, where $U_{r,\gamma_{1:r},\mu}$ and $U_{r,\beta_{1:r},\nu}$  are the $r$-th uniform $m$-\gos. According to Lemma~\ref{t1}(ii), the assumption on the parameters imply that $f_{r,\gamma_{1:r},\mu}-f_{r,\beta_{1:r},\nu}$ has sign variation $-+-$, hence $F_{r,\gamma_{1:r},\mu}-F_{r,\beta_{1:r},\nu}$ has sign variation $-+$. We may compute, with $U_i$ i.i.d.\ uniform $(0,1)$,
\begin{eqnarray*}
\mathbb{E}(L^{-1}(U_{r,\gamma_{1:r},\mu})) & = & 
   \mathbb{E}\left[\ln\left(1-\prod_{i=1}^r U_i^{1/\gamma_{i:r}}\right)\right]
 -\mathbb{E}\left[\ln\left(\prod_{i=1}^r U_i^{1/\gamma_{i:r}}\right)\right]  \\[1.25ex]
  & = &  -\sum_{\ell=1}^\infty\frac1\ell \frac{M(r,\gamma_{1:r},\mu)}{M(r,\ell+\gamma_{1:r},\mu)}
+ \sum_{i=1}^r\frac1{\gamma_{i:r}}.
\end{eqnarray*}
The inequality $\mathbb{E}(L^{-1}(U_{r,\gamma_{1:r},\mu}))<\mathbb{E}(L^{-1}(U_{r,\beta_{1:r},\nu}))$ 
is then equivalent to (\ref{eq:icx-ior}).
\end{proof}
A similar argument allows for a result about different sized $m$-\gos. We state the result without proof, as it goes along the same arguments as above, referring to Lemma~\ref{t11} instead of Lemma~\ref{t1}.
\begin{proposition}
\label{prop:log-odds1}
Let $X_{r,\gamma_{1:r}, \mu}$ and  $X_{q,\beta_{1:q},\mu}$, denote the $r$-th and $q$-th $m$-\gos\ with $r>q$ and minimal parameters  $\gamma_{1:r}$ and $\beta_{1:q}$, respectively, identical common differences $\mu$, and common baseline distribution function $F$.
    \begin{enumerate}[(i)]
    \item
    If $F \preceq_c L$ ($F$ is ILOR),
    $\gamma_{1:r}\geq\beta_{1:q}$ and
    \begin{equation*}
    \label{eq:icx-ior-rq}
    \sum_{i=1}^r \frac{1}{\gamma_{i:r}} -\sum_{i=1}^q\frac{1}{\beta_{i:q}}
    \leq
    \sum_{\ell=1}^\infty\frac1\ell\left(
    \frac{M(r,\gamma_{1:r},\mu)}{M(r,\ell+\gamma_{1:r},\mu)}
    -
    \frac{M(r,\beta_{1:q},\mu)}{M(r,\ell+\beta_{1:q},\mu)}\right).
    \end{equation*}
    Then $X_{r,\gamma_{1:r}, \mu} \leq_{icv} X_{q,\beta_{1:q},\mu}$.
    \item
    If $F \succeq_c L$ ($F$ is DLOR),
    $\gamma_{1:r}\geq\beta_{1:r}$ and
    \begin{equation*}
    \label{eq:icv-dor-rq}
    \sum_{i=1}^r \frac{1}{\gamma_{i:r}}
    -\sum_{i=1}^q\frac{1}{\beta_{i:q}}
    \geq
    \sum_{\ell=1}^\infty\frac1\ell\left(
    \frac{M(r,\gamma_{1:r},\mu)}{M(r,\ell+\gamma_{1:r},\mu)}
    -
    \frac{M(r,\beta_{1:q},\mu)}{M(r,\ell+\beta_{1:q},\mu)}\right).
    \end{equation*}
    Then $X_{r,\gamma_{1:r}, \mu} \geq_{icx} X_{q,\beta_{1:q},\mu}$.
    \end{enumerate}
\end{proposition}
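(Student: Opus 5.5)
The plan is the one used for Proposition~\ref{prop:log-odds}, with Lemma~\ref{t11} in place of Lemma~\ref{t1}. The argument combines Theorem~\ref{t0} with Lemma~\ref{l0}. The reference family is the logistic baseline $L$, and $\mathcal{H}$ is the class of nondecreasing concave (case (i)) or convex (case (ii)) functions. Both classes are closed under composition, as Theorem~\ref{t0} requires.

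The first step is the sign pattern. Since $r>q$ and $\gamma_{1:r}>\beta_{1:q}$ with a common difference $\mu$, Lemma~\ref{t11} shows that $f_{r,\gamma_{1:r},\mu}-f_{q,\beta_{1:q},\mu}$ has sign variation $-+-$ on $(0,1)$. The difference of distribution functions vanishes at $0$ and $1$, so, as in the proof of Proposition~\ref{c2}, $F_{r,\gamma_{1:r},\mu}-F_{q,\beta_{1:q},\mu}$ is first negative and then positive. In the boundary case $\gamma_{1:r}=\beta_{1:q}$ I would either use a limiting argument or note that the sign pattern still holds: the bracket in (\ref{14a}) becomes $c(1-y)^{r-q}-1$, which is monotone, so only the pattern $+-$ can arise. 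That pattern would give a stochastic ordering in the wrong direction, so this case probably has to be excluded or handled separately. The sign variation is preserved under the increasing map $L^{-1}$. It therefore holds for $Y_r=L^{-1}(U_{r,\gamma_{1:r},\mu})$ and $Y_q=L^{-1}(U_{q,\beta_{1:q},\mu})$.

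The second step is the expectations. As in the proof of Proposition~\ref{prop:log-odds}, write $L^{-1}(u)=\ln u-\ln(1-u)$, use the product representation (\ref{1}), and apply the series (\ref{eq:expectation0}). This gives
\[
\mathbb{E}Y_r=\sum_{i=1}^r\frac{1}{\gamma_{i:r}}-\sum_{\ell=1}^\infty\frac1\ell\,\frac{M(r,\gamma_{1:r},\mu)}{M(r,\ell+\gamma_{1:r},\mu)},
\]
and the analogous formula holds for $\mathbb{E}Y_q$ with $q$ factors. Convergence follows from the remark after Proposition~\ref{prop:neg}. The condition $\mathbb{E}Y_r\le \mathbb{E}Y_q$ is then exactly the displayed inequality in (i), and the reverse condition is the inequality in (ii). Here the $M$-ratio for $\beta$ must be read with $q$ factors; the ``$M(r,\cdot)$'' in the statement should be understood as $M(q,\cdot)$.

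The third step applies Lemma~\ref{l0}. Because the distribution-function difference is negative then positive, in case (i) this yields $Y_r\preceq_{icv}Y_q$. Theorem~\ref{t0} with $F\preceq_c L$, that is $L^{-1}\circ F$ concave, transfers the ordering to the baseline $F$. Case (ii) is symmetric, with $F\succeq_c L$ and the convex class, and Theorem~\ref{t0} transfers the resulting icx relation to $X$.

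The main obstacle is case (ii). Lemma~\ref{l0} for icx requires $G-F$ first positive and then negative with $\mathbb{E}X\le\mathbb{E}Y$. With our pattern this naturally gives $X_{r}\preceq_{icx}X_{q}$ when $\mathbb{E}Y_r\le\mathbb{E}Y_q$. Under the reversed mean inequality in (ii) we have $\mathbb{E}Y_q\le\mathbb{E}Y_r$ instead, and I would need to verify that the roles match the stated $\succeq_{icx}$ direction. I expect that the statement's direction and inequality must be reconciled with Lemma~\ref{l0}, and I would check this carefully before writing the final argument.
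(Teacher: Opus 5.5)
Your Step~3 misapplies Lemma~\ref{l0}, and the error cannot be patched. With $X=Y_r$ and $Y=Y_q$, the icv half of that lemma requires $F_{Y_r}-F_{Y_q}$ to be first \emph{positive} and then negative. Lemma~\ref{t11} gives you the opposite: first negative, then positive. Together with $\mathbb{E}Y_r\le\mathbb{E}Y_q$, that is exactly the icx configuration, as you observe yourself in your last paragraph. In fact, when $\gamma_{1:r}>\beta_{1:q}$ and $t_0$ is the crossing point, for every $t<t_0$
\[
\mathbb{E}\min(Y_r,t)-\mathbb{E}\min(Y_q,t)=-\int_{-\infty}^{t}\bigl(F_{Y_r}-F_{Y_q}\bigr)(x)\,dx>0 .
\]
So $Y_r\preceq_{icv}Y_q$ never holds in this regime. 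Symmetrically, $\mathbb{E}(Y_r-t)^+<\mathbb{E}(Y_q-t)^+$ for $t>t_0$, so the obstacle you flag in (ii) cannot be reconciled either.

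Since $F=L$ is both ILOR and DLOR, both parts of the statement are false as written. Concretely, take $r=2$, $q=1$, $\mu=1$, $\gamma_{1:2}=2$, $\beta_{1:1}=1$, i.e.\ the median $X_{2:3}$ of three logistic observations against a single one, $X_{1:1}$. Both means of $L^{-1}(U)$ vanish, so both displayed inequalities hold with equality; each side equals $-\tfrac16$, reading the $\beta$-term with $M(q,\cdot)$ as you propose. Yet $P(X_{2:3}>t)=3\bar L^2(t)-2\bar L^3(t)<\bar L(t)$ for $t>0$, where $\bar L=1-L$, so $\mathbb{E}X_{2:3}^+<\mathbb{E}X_{1:1}^+$. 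Then $h(x)=x^+$ violates (ii), and $h(x)=\min(x,0)$ violates (i).

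What your ingredients actually prove (Lemma~\ref{t11}, your expectation formula, Lemma~\ref{l0}, Theorem~\ref{t0}) is the statement with the pairings swapped:
\begin{itemize}
\item if $F\succeq_c L$ and $\mathbb{E}Y_r\le\mathbb{E}Y_q$ (the inequality displayed in (i)), then $X_{r,\gamma_{1:r},\mu}\preceq_{icx}X_{q,\beta_{1:q},\mu}$;
\item if $F\preceq_c L$ and $\mathbb{E}Y_r\ge\mathbb{E}Y_q$, then $X_{q,\beta_{1:q},\mu}\preceq_{icv}X_{r,\gamma_{1:r},\mu}$.
\end{itemize}
A small slip as well: $F\preceq_c L$ means $F^{-1}\circ L$ is concave; $L^{-1}\circ F$ is convex, not concave.

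The paper gives no separate proof here. It refers to the proof of Proposition~\ref{prop:log-odds}, which makes the same passage from a $-+$ pattern of the distribution-function difference to an icv conclusion. Following it faithfully therefore reproduces the error.

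The rest of your plan is fine. The expectation formula and the correction $M(r,\cdot)\to M(q,\cdot)$ are right. In the boundary case $\gamma_{1:r}=\beta_{1:q}$, the product representation gives $U_{r,\tilde{\gamma}_r}>U_{q,\tilde{\beta}_q}$ almost surely under the natural coupling. Hence $\mathbb{E}Y_r>\mathbb{E}Y_q$, so (i) is vacuous there and (ii) holds trivially.
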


\subsection{Star-shaped orders}
For verifying the star-shaped order among $m$-\gos\ with baseline distributions drawn from restricted families, we apply the following lemma.
\begin{lemma}[\cite{shaked2007}, Theorem 4.A.54]
\label{l2}
Two life distribution functions
satisfy $F \preceq_{ss} G$  iff $\int_y^\infty x \,F(dx) \leq
\int_y^\infty x \,G(dx)$ for every $y \geq 0$.
\end{lemma}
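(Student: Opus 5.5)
The plan is to reduce the star-shaped order to a comparison of functions of one variable, using the standard characterization of nondecreasing star-shaped functions on $[0,\infty)$. A nondecreasing function $h$ with $h(0)=0$ is star-shaped iff $h(x)/x$ is nondecreasing. Write $h(x)=x\,k(x)$ with $k\ge 0$ nondecreasing; for the normalization, $h(0)\le 0$ can be absorbed. Any nondecreasing $k$ can be approximated by mixtures of indicators $\mathbbm{1}_{[y,\infty)}$, $y\ge0$, plus constants. So the extremal star-shaped functions are, up to positive multiples and monotone limits,
\[
h_y(x)=x\,\mathbbm{1}_{[y,\infty)}(x),\qquad y\ge 0,
\]
and $h(x)=x$ itself (the case $y=0$). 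We then get $\mathbb{E}h(X)\le\mathbb{E}h(Y)$ for all nondecreasing star-shaped $h$ iff $\int_y^\infty x\,F(dx)\le\int_y^\infty x\,G(dx)$ for every $y\ge0$.

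\textbf{Necessity.} This is immediate: each $h_y$ is nondecreasing and star-shaped. It is nonnegative, vanishes at $0$, and $h_y(x)/x=\mathbbm{1}_{[y,\infty)}(x)$ is nondecreasing. Plugging $h_y$ into the definition of $\preceq_{ss}$ gives exactly the tail-moment inequality.

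\textbf{Sufficiency.}
\begin{enumerate}
\item Take $h$ nondecreasing and star-shaped with finite expectations. Since the variables are nonnegative, subtracting $h(0)$ from $h$ only shifts both expectations by the same amount; I will check that this preserves star-shapedness.
\item Set $k(x)=h(x)/x$ on $(0,\infty)$. This is nondecreasing and nonnegative, and admits the representation $k(x)=k(0+)+\int_{(0,x]}d k(y)$ with a nonnegative measure $dk$.
\item By Fubini,
\[
\mathbb{E}h(X)=k(0+)\,\mathbb{E}X+\int_{(0,\infty)}\int_{[y,\infty)}x\,F(dx)\,dk(y).
\]
Each term is monotone in the tail integrals $\int_y^\infty x\,F(dx)$, with $y=0$ giving $\mathbb{E}X$. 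The hypothesis then yields $\mathbb{E}h(X)\le\mathbb{E}h(Y)$.
\end{enumerate}

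\textbf{Main obstacle.} The hard part is the technical bookkeeping rather than the idea. One issue is justifying the reduction $h(0)=0$: this means confirming that star-shapedness, in the sense used by \cite{shaked2007}, concerns $h(x)/x$ after normalization, or handling $h(0)<0$ separately. The other is applying Fubini when expectations may be infinite. I would first prove the result for bounded $k$ and then pass to the limit by monotone convergence.
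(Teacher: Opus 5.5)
\textbf{There is a genuine problem in your Step~1.} Your extremal-function idea, reducing to $x\mapsto x\mathbbm{1}_{[y,\infty)}(x)$, is the standard route; the paper itself gives no proof and only cites Theorem~4.A.54 of \cite{shaked2007}. But the normalization $h(0)=0$ cannot be obtained the way you propose.

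If $h(0)<0$, then $h-h(0)$ need not be star-shaped, because $\bigl(h(x)-h(0)\bigr)/x=h(x)/x+|h(0)|/x$ picks up a strictly decreasing term. For example, $h=\mathbbm{1}_{[1,\infty)}-1$ is nondecreasing and satisfies $h(\alpha x)\le\alpha h(x)$ for all $\alpha\in[0,1]$, $x\ge0$, so $h(x)/x$ is nondecreasing on $(0,\infty)$. Yet $h-h(0)=\mathbbm{1}_{[1,\infty)}$ has $\mathbbm{1}_{[1,\infty)}(x)/x$ decreasing on $[1,\infty)$.

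Nor can the case $h(0)<0$ be ``handled separately'', because for such $h$ the sufficiency claim is false. Take $X\equiv1$, and let $Y$ equal $\frac12$ or $2$ with probability $\frac12$ each. Then $\int_y^\infty x\,F(dx)=\mathbbm{1}\{y\le1\}$ is dominated by $\int_y^\infty x\,G(dx)\in\{\frac54,1,0\}$ for every $y\ge0$. Yet the same $h$ gives $\mathbb{E}h(X)=0>-\frac12=\mathbb{E}h(Y)$. Admitting all functions $\mathbbm{1}_{[a,\infty)}-1$ would in fact collapse $\preceq_{ss}$ into $\preceq_{st}$.

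So the equivalence holds only when $h(0)=0$ is part of the definition of the class, and your proof has to impose this rather than derive it. This convention is also what makes the class closed under composition on $[0,\infty)$, as Theorem~\ref{t0} needs.

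\textbf{With $h(0)=0$ imposed, the rest works and is simpler than you fear.} Now $h\ge0$, so $k(x)=h(x)/x$ is nonnegative and nondecreasing with $k(0+)\in[0,\infty)$. Tonelli then applies directly in $[0,\infty]$, with no truncation or monotone-convergence step.

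Two small loose ends remain. Your representation $k(x)=k(0+)+\int_{(0,x]}dk$ requires $k$ to be right-continuous. Also, $\int_y^\infty$ is ambiguous between $[y,\infty)$ and $(y,\infty)$. Both are handled at once by the layer-cake formula
\[
h(x)=\int_0^\infty x\,\mathbbm{1}\{k(x)>t\}\,dt ,
\]
in which each set $A_t=\{x>0: k(x)>t\}$ is an interval $[y_t,\infty)$ or $(y_t,\infty)$. Either form of the tail condition implies the other by monotone limits in $y$; if the tail integrals are infinite, both sides are infinite and the inequality is trivial. Hence $\mathbb{E}h(X)=\int_0^\infty\mathbb{E}[X\mathbbm{1}_{A_t}(X)]\,dt\le\mathbb{E}h(Y)$.
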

In order to apply Lemma~\ref{l2} we determine explicit formulae for $\int_y^\infty x \,F(dx)$ for the distribution functions of $m$-\gos\ with baseline generalized Pareto distributions.

We first consider the case of exponential baseline distribution and $r$-th $m$-\gos\ with nonzero common difference $\mu$.
Integrating by parts, we calculate 
\begin{eqnarray}
\lefteqn{Z_{r,\gamma_{1;r}, \mu,0}(y) =
\int_y^\infty x \,F_{r,\gamma_{1;r}, \mu} (W_0 (dx)) }
\nonumber \\
& = &
 - x \bigl[1-F_{r,\gamma_{1:r}, \mu}(W_0(x))\bigr] \Bigr|_y^\infty
+ \int_y^\infty 1-F_{r,\gamma_{1:r}, \mu}(W_0(x))\,dx \nonumber \\
& = & e^{-\gamma_{1:r}y}\sum_{j=0}^{r-1}
\frac{M(j,\gamma_{1:r},\mu)}{j!\mu^j}
\left[y (1-e^{-\mu y})^j +
\sum_{k=0}^j {j \choose k} (-1)^k
\frac{e^{-k\mu y}}{\gamma_{1:r}+k\mu}\right]. \nonumber
\label{25b}
\end{eqnarray}
for all $y \geq 0$.
Similarly, for $\mu=0$, we obtain
\begin{eqnarray}
\lefteqn{Z_{r,\gamma_{1;r}, 0,0}(y) =
\int_y^\infty x \,F_{r,\gamma_{1;r}, 0} (W_0 (dx)) }
\nonumber \\ & = &
y \bigr[1-F_{r,\gamma_{1:r}, 0}(W_0(y))\bigl]
+ \int_y^\infty 1-F_{r,\gamma_{1:r}, 0}(W_0(x))\,dx \nonumber \\
& = & e^{-\gamma_{1:r}y}
\sum_{j=0}^{r-1}
\left[\frac{\gamma^j_{1:r}}{j!} y^{j+1}
+  \sum_{k=0}^j \frac{\gamma_{1:r}^{k-1}}{k!} y^k
\right] . \nonumber
\label{25c}
\end{eqnarray}
%
%
For $\alpha \neq 0 \neq \mu$  with $d_\alpha= \infty$ for $\alpha <0$
and $\frac{1}{\alpha}$ for $\alpha >0$ under the change of variables
$z= (1-\alpha x)^{\frac{\mu}{\alpha}}$,
we have
\begin{eqnarray}
\lefteqn{Z_{r,\gamma_{1;r}, \mu,\alpha}(y)=
\int_y^\infty x \,F_{r,\gamma_{1;r}, \mu} (W_\alpha (dx)) }
\nonumber \\ 
& = &
y \bigl[1-F_{r,\gamma_{1:r}, \mu}(W_\alpha(y))\bigr]
+ \int_y^{d_\alpha} 1-F_{r,\gamma_{1:r}, \mu} (W_\alpha(x))\,dx \nonumber \\
& = & \sum_{j=0}^{r-1}
\frac{M(j,\gamma_{1:r},\mu)}{j!\mu^j} \nonumber \\
& & 
\times\left[ y(1-\alpha y)^{\frac{\gamma_{1:r}}{\alpha}}
\left(1-(1-\alpha y)^{\frac{\mu}{\alpha}}\right)^j +
\sum_{k=0}^j {j \choose k} (-1)^k
\frac{(1-\alpha y)^{\frac{\alpha+ \gamma_{1:r} +k\mu}{\alpha}}}{
\alpha+ \gamma_{1:r} +k\mu}
\right]. \nonumber
\label{25d}
\end{eqnarray}
Under the same notation for $\alpha \neq 0 = \mu$ we obtain
\begin{eqnarray}
\lefteqn{Z_{r,\gamma_{1;r}, 0,\alpha}(y)=
\int_y^\infty x \,F_{r,\gamma_{1;r},0} (W_\alpha (dx)) }
\nonumber \\ 
& = & \sum_{j=0}^{r-1}
\frac{\gamma_{1:r}^j}{j!}\left[
y(1\!-\!\alpha y)^{\frac{\gamma_{1:r}}{\alpha}} \left( - \frac{\ln (1\!-\!\alpha y)}{\alpha}
\right)^j + \int_y^{d_\alpha}
(1\!-\!\alpha x)^{\frac{\gamma_{1:r}}{\alpha}} \left( - \frac{
\ln (1\!-\!\alpha x)}{\alpha}
\right)^j \,dx\! \right] \nonumber \\[1.25ex]
& = & \sum_{j=0}^{r-1}
\frac{\gamma_{1:r}^j}{j!}\left[
y(1-\alpha y)^{\frac{\gamma_{1:r}}{\alpha}} \left( - \frac{\ln (1-\alpha y)}{\alpha}
\right)^j\right. 
+ \left. (1-\alpha y)^{\frac{\gamma_{1:r}}{\alpha}+1}
\sum_{k=0}^j \frac{j!(-\ln (1-\alpha y))^k}{k!\alpha^k(\gamma_{1:r}+\alpha)^{j+1-k}}
\right]. \nonumber
\label{25e}
\end{eqnarray}

Now we are in a position to establish the star-shaped order relations for $m$-generalized order statistics with baseline distributions determined by star relations with generalized Pareto distributions.
The cases of $r$-th $m$-\gos\ satisfying conditions (i), (iii) or (v) 
of Lemma~\ref{t1} do not need to be taken into account, because owing to Corollary~\ref{c1}, $X_{r,\gamma_{1:r}, \mu} \preceq_{st}X_{r, \beta_{1:r},\nu}$ and so $X_{r,\gamma_{1:r}, \mu} \preceq_{ss} X_{r, \beta_{1:r},\nu}$. 
Due to Corollary~\ref{c2}, for $\gamma_{1:r}> \beta_{1:q}$ and $r>q$, we have that $X_{r,\gamma_{1:r}, \mu} \preceq_{ss} X_{q, \beta_{1:q},\mu}$.
Moreover, with the notation of Remark~\ref{rem1} with the weaker assumption $F \preceq_{ss} G$ we get
$X_{p,\tilde{\gamma}_p} \preceq_{ss} X_{s,\tilde{\beta}_s}$.
\begin{proposition}\label{p1}
Consider $r$-th $m$-\gos\ $X_{r,\gamma_{1:r},\mu}$ and $X_{r,
\beta_{1:r},\nu}$ with identical parent distribution function $F$.
\begin{enumerate}[(a)]
\item
Let $\gamma_{1:r} > \beta_{1:r}$, $\frac{M(r,\gamma_{1:r},\mu)}{M(r,\beta_{1:r},\nu)}<1$ and $\mu,\nu >0$.
    \begin{enumerate}[(i)]
    \item
    If $F \succeq_* W_0$ ($F$ is  DFRA), and for the first zero $x_*$ of $f_{r, \gamma_{1:r}, \mu} - f_{r,\beta_{1:r}, \nu}$
    in $(0,1)$ we have
    \begin{equation*}
    \label{25f}
    Z_{r, \gamma_{1:r}, \mu, 0}(-\ln (1-x_*)) -
    Z_{r, \beta_{1:r}, \nu, 0}(-\ln (1-x_*)) \leq 0,
    \end{equation*}
    then $X_{r,\gamma_{1:r}, \mu} \preceq_{ss}X_{r, \beta_{1:r},\nu}$.

    \item
    If $-\beta_{1:r} <\alpha\neq 0$,  $F \succeq_* W_\alpha$ ($F$ is $\alpha$-DGFRA), and for the first zero  $x_*$ of $f_{r, \gamma_{1:r}, \mu} - f_{r,\beta_{1:r}, \nu}$ we have
    \begin{equation*}
    \label{25g}
    Z_{r, \gamma_{1:r}, \mu, \alpha}\left(\frac{1-(1-x_*)^\alpha}{\alpha}
    \right) -
    Z_{r, \beta_{1:r}, \nu, \alpha}\left(\frac{1-(1-x_*)^\alpha}{\alpha}
    \right)  \leq 0,
    \end{equation*}
    then $X_{r,\gamma_{1:r}, \mu} \preceq_{ss} X_{r, \beta_{1:r},\nu}$.
    \end{enumerate}

\item
Let $\gamma_{1:r} > \beta_{1:r}$,  $\beta_{r:r} > \gamma_{1:r}$, 
$\frac{M(r,\gamma_{1:r},0)}{M(r,\beta_{1:r},\nu)}<1$ and $\mu=0 <\nu$.
    \begin{enumerate}[(i)]
    \item
    If $F \succeq_* W_0$ ($F$ is DFRA), and for the first zero $x_*$ of $f_{r, \gamma_{1:r}, 0} - f_{r,\beta_{1:r}, \nu}$ in $(0,1)$ we have
    \begin{equation*}
    \label{25h}
    Z_{r, \gamma_{1:r}, 0, 0}(-\ln (1-x_*)) -
    Z_{r, \beta_{1:r}, \nu, 0}(-\ln (1-x_*)) \leq 0,
    \end{equation*}
    then $X_{r,\gamma_{1:r}, 0} \preceq_{ss}X_{r, \beta_{1:r},\nu}$.

    \item
    If $-\beta_{1:r} <\alpha\neq 0$, and $F \succeq_* W_\alpha$ ($F$ is $\alpha$-DGRFA), and for the first zero $x_*$ of $f_{r, \gamma_{1:r}, 0} - f_{r,\beta_{1:r}, \nu}$ we have
    \begin{equation*}
    \label{25i}
    Z_{r, \gamma_{1:r}, 0, \alpha}\left(\frac{1-(1-x_*)^\alpha}{\alpha}
    \right) -
    Z_{r, \beta_{1:r}, \nu, \alpha}\left(\frac{1-(1-x_*)^\alpha}{\alpha}
    \right)  \leq 0,
    \end{equation*}
    then $X_{r,\gamma_{1:r}, 0} \preceq_{ss} X_{r, \beta_{1:r},\nu}$.
    \end{enumerate}
\end{enumerate}
\end{proposition}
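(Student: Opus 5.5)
The argument combines Theorem~\ref{t0} for the star-shaped family with Lemma~\ref{l2}, applied to $m$-\gos\ with generalized Pareto baseline. The family of nondecreasing star-shaped functions is closed under composition. The assumption $F \succeq_* W_\alpha$ means that $F^{-1}\circ W_\alpha$ is star-shaped. By Theorem~\ref{t0}, with $G=W_\alpha$, it therefore suffices to prove
\[
Y_{r,\gamma_{1:r},\mu,\alpha}=W_\alpha^{-1}(U_{r,\gamma_{1:r},\mu})\preceq_{ss} Y_{r,\beta_{1:r},\nu,\alpha}=W_\alpha^{-1}(U_{r,\beta_{1:r},\nu}).
\]
These are nonnegative (life) random variables, so by Lemma~\ref{l2} this reduces to showing
\[
D(y)=Z_{r,\gamma_{1:r},\mu,\alpha}(y)-Z_{r,\beta_{1:r},\nu,\alpha}(y)\le 0 \quad\text{for all } y\ge 0.
\]
The restriction $\alpha>-\beta_{1:r}$ (together with $\gamma_{1:r}>\beta_{1:r}$) guarantees that both expectations are finite.

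To analyse $D$, I would differentiate: $D'(y)=-y\,[\phi_\gamma(y)-\phi_\beta(y)]$, where $\phi$ denotes the density of $Y$. Since $W_\alpha$ is increasing, $\phi_\gamma(y)-\phi_\beta(y)=[f_{r,\gamma_{1:r},\mu}-f_{r,\beta_{1:r},\nu}](W_\alpha(y))\,W_\alpha'(y)$. This has the same sign as the uniform density difference at $x=W_\alpha(y)$. Under the hypotheses of (a) (case (ii) of Lemma~\ref{t1}) and of (b) (case (iv)), this difference has sign variation $-+-$ on $(0,1)$, with first zero $x_*$ and second zero $x^*$. Hence, on $(0,\infty)$ (or $(0,1/\alpha)$), $D$ is increasing on $(0,y_*)$, decreasing on $(y_*,y^*)$ and increasing again on $(y^*,d_\alpha)$, where $y_*=W_\alpha^{-1}(x_*)$. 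This equals $-\ln(1-x_*)$ for $\alpha=0$, and $W_\alpha^{-1}(x_*)$ in general; I would verify that this matches the argument written in the statement.

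The boundary values give $D(d_\alpha^-)=0$, since both tail integrals vanish, and $D(0)=\mathbb{E}Y_\gamma-\mathbb{E}Y_\beta$. On $(y^*,d_\alpha)$, $D$ increases to $0$, so it is negative there. On $(y_*,y^*)$, $D$ decreases, so it is bounded by $D(y_*)$. On $[0,y_*]$, $D$ increases, so it is also bounded by $D(y_*)$. Therefore $\sup_{y\ge0} D(y)=\max\{D(y_*),0\}$, and the single hypothesis $D(y_*)\le 0$ gives $D\le0$ everywhere. Lemma~\ref{l2} then yields $Y_\gamma\preceq_{ss}Y_\beta$, and Theorem~\ref{t0} transfers the order to baseline $F$. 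Parts (a)(i)/(ii) and (b)(i)/(ii) differ only in which formula for $Z$ is used ($\mu\ne0$ or $\mu=0$, and $\alpha=0$ or $\alpha\neq0$). The sign pattern is taken from Lemma~\ref{t1}(ii) in part (a) and from Lemma~\ref{t1}(iv) in part (b).

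I expect the main obstacle to be bookkeeping rather than substance. First, the direction of Theorem~\ref{t0} with the reversed transform relation must be checked: $F\succeq_*W_\alpha$ means $F^{-1}\circ W_\alpha\in\mathcal H$, which is exactly what the theorem requires. Second, the argument of $Z$ must correctly equal $W_\alpha^{-1}(x_*)$. Third, the limit $D(d_\alpha^-)=0$ must be justified by integrability of $x$ times the density near $d_\alpha$, which is where $\alpha>-\beta_{1:r}$ is used.
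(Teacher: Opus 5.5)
Your proposal is correct and follows essentially the same route as the paper. It reduces the claim via Theorem~\ref{t0} to the generalized Pareto baseline $W_\alpha$. It then uses the $+-+$ sign pattern of $D'$ inherited from Lemma~\ref{t1}(ii)/(iv), so that $D$ has its only interior local maximum at $W_\alpha^{-1}(x_*)$ and tends to $0$ at $d_\alpha$, and concludes with Lemma~\ref{l2}. You also check explicitly that $\frac{1-(1-x_*)^\alpha}{\alpha}=W_\alpha^{-1}(x_*)$ and that $\alpha>-\beta_{1:r}$ gives finite means, hence $D(d_\alpha^-)=0$; these are the details the paper leaves as ``verified in the same way.''
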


\begin{proof}
We formally prove only Proposition~\ref{p1}(ia). The other cases are verified in such the same way. 
We only mention that the assumption $-\beta_{1:r}<
\alpha <0$ in the cases (iia) and (iib) assert finiteness of expectations of $m$-\gos\ with the baseline distribution functions 
$W_\alpha$ which is necessary for applying Lemma \ref{l2} for these 
distributions.
The difference $Z_{r, \gamma_{1:r}, \mu, 0}(y) -
Z_{r, \beta_{1:r}, \nu, 0}(y)$ tends to $0$ as $x$ tends to $+\infty$. The derivative of the difference equals to
\begin{equation*}
\label{25k}
Z'_{r, \gamma_{1:r}, \mu, 0}(y) -
Z'_{r, \beta_{1:r}, \nu, 0}(y) = y e^{-y} \bigl[
f_{r, \beta_{1:r},\nu}(1-e^{-y}) - f_{r, \gamma_{1:r}, \mu}(1-e^{-y})\bigr],
\end{equation*}
and its sign is identical with the sign of the expression
in the brackets.
By Lemma~\ref{t1}, its sign variation is $+-+$, which means that $Z_{r, \gamma_{1:r}, \mu, 0}(y) - Z_{r, \beta_{1:r}, \nu, 0}(y)$
first increases to a local maximum at $W^{-1}_0(x_*)= -\ln (1-x_*)$, then decreases to a negative local minimum and eventually increases to $0$.
It is always nonpositive if it is nonpositive at $-\ln (1-x_*)$. Due to Lemma \ref{l2}, we get $Y_{r,\gamma_{1:r}, \mu,0}\preceq_{ss} Y_{r,\beta_{1:r}, \nu, 0}$.
Since $F \succeq_* W_0$, by Theorem \ref{t0}, we also have $X_{r,\gamma_{1:r}, \mu}\preceq_{ss} X_{r,\beta_{1:r}, \nu}$
for the respective $m$-\gos\ with the baseline distribution function $F$.
\end{proof} 


Similar arguments lead to the following proposition.
\begin{proposition}\label{p2}
Let $X_{r, \gamma_{1:r}, \mu}$ and $X_{q, \beta_{1:q}, \mu}$ denote the $r$-th and $q$-th $m$-\gos\ with the same common difference $\mu$
and baseline distribution function $F$. Suppose that $\gamma_{1:r} > \beta_{1:q}$ and $r > q$.
\begin{enumerate}
\item
Assume $\mu=0$.
    \begin{enumerate}[(i)]
    \item
    If 
    $F \succeq_* W_0$ ($F$ is DFRA),
    and for the first zero  $x_*$ of $f_{r, \gamma_{1:r}, 0} - f_{q,\beta_{1:q}, 0}$ on $(0,1)$ we have
    \begin{equation*}
    \label{25l}
    Z_{r, \gamma_{1:r}, 0, 0}(-\ln (1-x_*)) -
    Z_{q, \beta_{1:q}, 0, 0}(-\ln (1-x_*)) \leq 0,
    \end{equation*}
    then $X_{r,\gamma_{1:r}, 0} \preceq_{ss} X_{q, \beta_{1:q},0}$.

    \item
    If 
    $-\beta_{1:q} < \alpha \neq 0$,
    $F \succeq_* W_\alpha$ ($F$ is $\alpha$-DGFRA), and for the first zero $x_*$ of $f_{r, \gamma_{1:r}, 0} - f_{q,\beta_{1:q}, 0}$ on $(0,1)$ we have
    \begin{equation*}
    \label{25m}
    Z_{r, \gamma_{1:r}, 0, \alpha}\left(\frac{1-(1-x_*)^\alpha}{\alpha}
    \right)  -
    Z_{q, \beta_{1:q}, 0, \alpha}\left(\frac{1-(1-x_*)^\alpha}{\alpha}
    \right)  \leq 0,
    \end{equation*}
    then $X_{r,\gamma_{1:r}, 0} \preceq_{ss} X_{q, \beta_{1:q},0}$.
    \end{enumerate}

\item
Take now $\mu>0$.
    \begin{enumerate}[(i)]
    \item
    If 
    $F \succeq_* W_0$ ($F$ is DFRA),
    and for the first zero
    $x_*$ of $f_{r, \gamma_{1:r}, \mu} - f_{q,\beta_{1:q}, \mu}$ on $(0,1)$, we have
    \begin{equation*}
    \label{25n}
    Z_{r, \gamma_{1:r}, \mu, 0}(-\ln (1-x_*)) -
    Z_{q, \beta_{1:q}, \mu, 0}(-\ln (1-x_*)) \leq 0,
    \end{equation*}
    then $X_{r,\gamma_{1:r}, 0} \preceq_{ss} X_{q, \beta_{1:q},0}$.
    
    \item
    If
    $-\beta_{1:q} < \alpha \neq 0$,
    $F \succeq_* W_\alpha$ ($F$ is $\alpha$-DGFRA), and for the first zero $x_*$ of $f_{r, \gamma_{1:r}, \mu} - f_{q,\beta_{1:q}, \mu}$ on $(0,1)$ we have
    \begin{equation*}
    \label{25p}
    Z_{r, \gamma_{1:r}, \mu, \alpha}\left(\frac{1-(1-x_*)^\alpha}{\alpha}
    \right)  -
    Z_{q, \beta_{1:q}, \mu, \alpha}\left(\frac{1-(1-x_*)^\alpha}{\alpha}
    \right)  \leq 0,
    \end{equation*}
    then $X_{r,\gamma_{1:r}, 0} \preceq_{ss} X_{q, \beta_{1:q},0}$.
    \end{enumerate}
\end{enumerate}
\end{proposition}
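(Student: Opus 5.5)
The argument follows the proof of Proposition~\ref{p1}, with Lemma~\ref{t11} now playing the role of Lemma~\ref{t1}. Since $F\succeq_* W_\alpha$ means $F^{-1}\circ W_\alpha$ is nondecreasing and star-shaped, and the family of nondecreasing star-shaped functions is closed under composition, Theorem~\ref{t0} reduces the claim to the generalized Pareto baseline. So it suffices to show
\[
Y_{r,\gamma_{1:r},\mu,\alpha}=W_\alpha^{-1}(U_{r,\gamma_{1:r},\mu})\preceq_{ss} Y_{q,\beta_{1:q},\mu,\alpha}=W_\alpha^{-1}(U_{q,\beta_{1:q},\mu}).
\]
The cases $\alpha=0$ and $\alpha\neq0$, and $\mu=0$ and $\mu>0$, differ only in which explicit formula for $Z$ is used. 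The condition $-\beta_{1:q}<\alpha$ for $\alpha<0$ makes both expectations finite, since $\beta_{1:q}<\gamma_{1:r}$. This finiteness is what Lemma~\ref{l2} needs.

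By Lemma~\ref{l2}, it is enough to prove that
\[
D(y)=Z_{r,\gamma_{1:r},\mu,\alpha}(y)-Z_{q,\beta_{1:q},\mu,\alpha}(y)\leq 0 \quad\text{for all } y\ge0.
\]
To do this, I will compute $D'(y)$. Since $Z(y)=\int_y^{d_\alpha} x\,dF_{\cdot}(W_\alpha(x))$, we have
\[
D'(y)=y\,w_\alpha(y)\bigl[f_{q,\beta_{1:q},\mu}(W_\alpha(y))-f_{r,\gamma_{1:r},\mu}(W_\alpha(y))\bigr],
\]
where $w_\alpha=W_\alpha'>0$ on the support. Hence the sign of $D'$ is the negative of the sign of the density difference evaluated at $x=W_\alpha(y)$. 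Because $r>q$ and $\gamma_{1:r}>\beta_{1:q}$, Lemma~\ref{t11} gives sign variation $-+-$ for $f_{r,\gamma_{1:r},\mu}-f_{q,\beta_{1:q},\mu}$ on $(0,1)$, in both subcases $\mu=0$ and $\mu>0$. As $W_\alpha$ is increasing, $D'$ has sign variation $+-+$ in $y$. Its first sign change is at $y_*=W_\alpha^{-1}(x_*)$, which equals $-\ln(1-x_*)$ for $\alpha=0$ and $(1-(1-x_*)^\alpha)/\alpha$ for $\alpha\neq0$.

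It remains to check the boundary behaviour. At the right end, $D(y)\to0$ as $y\to d_\alpha$, because both tail integrals vanish there; this again uses finite means. So $D$ increases up to $y_*$, then decreases, then increases back to $0$. The last increasing stretch forces $D$ to be negative just before $d_\alpha$. Therefore $\sup_y D(y)=\max\{D(0^+),D(y_*)\}$. Since $D$ is increasing on $[0,y_*]$, $D(0)\le D(y_*)$, so the whole condition reduces to the hypothesis $D(y_*)\le0$. Lemma~\ref{l2} then gives $Y_{r}\preceq_{ss}Y_q$, and Theorem~\ref{t0} transfers this to the baseline $F$.

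The main obstacle is the justification of the limit $D(y)\to0$ at $d_\alpha$, and of the use of Lemma~\ref{l2}, in the heavy-tailed case $\alpha<0$. This requires integrability of $x$ under both $m$-\gos\ laws with baseline $W_\alpha$, which I expect to get from the tail $1-F_{q,\beta_{1:q},\mu}(W_\alpha(x))\asymp x^{\beta_{1:q}/\alpha}\cdot(\log)$. A secondary point is that the conclusion should read $X_{r,\gamma_{1:r},\mu}\preceq_{ss}X_{q,\beta_{1:q},\mu}$ in part 2, which the argument yields directly.
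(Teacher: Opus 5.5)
Your proposal is correct and is essentially the paper's argument, which proves this proposition by the same reasoning as Proposition~\ref{p1}: reduce to the $W_\alpha$ baseline via Theorem~\ref{t0}, apply Lemma~\ref{l2}, use the $+-+$ sign pattern of $D'$ (now supplied by Lemma~\ref{t11}) and $D\to 0$ at the right endpoint, where finiteness of the means follows from (\ref{23b}) under $\alpha>-\beta_{1:q}$. You are right that the conclusion in part 2 should read $X_{r,\gamma_{1:r},\mu}\preceq_{ss}X_{q,\beta_{1:q},\mu}$; also, $\sup_y D(y)$ is really $\max\{D(y_*),0\}$ rather than $\max\{D(0^+),D(y_*)\}$, which is harmless because $D<0$ on the final increasing stretch.
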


\begin{remark}\label{rem3}
Natural generalizations of Propositions~\ref{p1} and \ref{p2} follow for $X_{p,\tilde{\gamma}_p}$ and $X_{s, \tilde{\beta}_s}$ defined as in 
Remark~\ref{rem1} as long as the respective baseline distribution functions satisfy $F \preceq_{ss} G$, thus concluding, in this setting, that $X_{p,\tilde{\gamma}_p} \preceq_{ss}X_{s, \tilde{\beta}_s}$.
\end{remark}

Note that, under the assumptions of either of Propositions~\ref{p1} or \ref{p2}, $f_{r,\gamma_{1:r},\mu}-f_{q,\beta_{1:q},\nu}$ has sign variation $+-+$ on $(0,1)$.
Let
\[
\widetilde Z_{r,\gamma_{1:r}, \mu,\alpha}(y)
:=
\int_y^0 x \,F_{r,\gamma_{1:r}, \mu} \bigl(\widetilde W_\alpha (dx)\bigr),
\qquad y<0,
\]
and note that, by change of variables 
$$
\widetilde Z_{r,\gamma_{1:r},\mu,\alpha}(y)=
-\int_0^{-y} t \,F_{r,\gamma_{1:r}, \mu} \bigl(W_\alpha(dt)\bigr)=
Z_{r,\gamma_{1:r},\mu,\alpha}(-y)\;-\;Z_{r,\gamma_{1:r},\mu,\alpha}(0),
$$
where $Z_{r,\gamma_{1:r}, \mu, \alpha}(0)=E Y_{r,\gamma_{1:r}, \mu, \alpha}$, is described in (\ref{23b}).
{\samepage
\begin{proposition}
\label{negSS}
Let $X_{r,\gamma_{1:r},\mu}$ and $X_{q,\beta_{1:q},\nu}$ be $m$-\gos\ based on the same baseline distribution function $F$.
Assume that the corresponding parameter conditions of Proposition~\ref{p1} (when $q=r$) or Proposition~\ref{p2} (when $\mu=\nu$) hold, and let $x_*\in(0,1)$ be the first zero of $f_{r,\gamma_{1:r},\mu}-f_{q,\beta_{1:q},\nu}$.
Assume one of the following conditions:
    \begin{enumerate}[(i)]
    \item 
    $\alpha=0$, $F\succeq_* \widetilde W_0$ and
    \begin{equation*}
    Z_{r,\gamma_{1:r},\mu,0}(-\log x_*)-Z_{q,\beta_{1:q},\nu,0}(-\log x_*)
    \ \le\
    \sum_{i=1}^r \frac{1}{\gamma_{1:r}+(i-1)\mu}
    -
    \sum_{i=1}^q \frac{1}{\beta_{1:q}+(i-1)\nu},
    \end{equation*}	
    
    \item 
    $\alpha\neq 0$, $F\succeq_* \widetilde W_\alpha$ and
    \begin{equation*}
    Z_{r,\gamma_{1:r},\mu,\alpha}\left(\frac{1-x_*^\alpha}{\alpha}\right)-
    Z_{q,\beta_{1:q},\nu,\alpha}\left(\frac{1-x_*^\alpha}{\alpha}\right)
    \ \leq\
    \frac{1}{\alpha}\left[
    \frac{M(q,\beta_{1:q},\nu)}{M(q,\alpha+\beta_{1:q},\nu)}
    -
    \frac{M(r,\gamma_{1:r},\mu)}{M(r,\alpha+\gamma_{1:r},\mu)}
    \right].
    \end{equation*}
    \end{enumerate}
Then
$X_{r,\gamma_{1:r},\mu}\ \preceq_{ss}\ X_{q,\beta_{1:q},\nu}$.
\end{proposition}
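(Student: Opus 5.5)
The plan is to follow the proof of Proposition~\ref{p1} and Proposition~\ref{p2}, with the generalized Pareto reference $W_\alpha$ replaced by its negative counterpart $\widetilde W_\alpha$. By Theorem~\ref{t0}, applied to the family of nondecreasing star-shaped functions (closed under composition) together with $F\succeq_*\widetilde W_\alpha$, it suffices to prove
\[
\widetilde Y_{r,\gamma_{1:r},\mu,\alpha}=\widetilde W_\alpha^{-1}(U_{r,\gamma_{1:r},\mu})\ \preceq_{ss}\ \widetilde Y_{q,\beta_{1:q},\nu,\alpha}=\widetilde W_\alpha^{-1}(U_{q,\beta_{1:q},\nu}).
\]
Since these variables live on the negative half-line, Lemma~\ref{l2} cannot be applied directly. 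I would first translate (or use the obvious analogue of Lemma~\ref{l2} for distributions bounded above by $0$) and reduce to showing that $y\mapsto \widetilde Z_{r,\gamma_{1:r},\mu,\alpha}(y)-\widetilde Z_{q,\beta_{1:q},\nu,\alpha}(y)$ has the correct sign for all $y$ in the support. I would also supplement this with the comparison of total means, which is exactly where the right-hand sides of (i) and (ii) enter.

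The next step is sign analysis. The derivative of the difference in $y$ equals $y$ times the difference of the densities of the $\widetilde Y$'s at $y$. After the substitution $x=\widetilde W_\alpha(y)$, its sign is governed by $f_{q,\beta_{1:q},\nu}-f_{r,\gamma_{1:r},\mu}$ evaluated at $\widetilde W_\alpha(y)$, multiplied by the sign of $y<0$. As noted before the statement, under the parameter conditions of Proposition~\ref{p1} or \ref{p2} the density difference has sign variation $+-+$ on $(0,1)$, with first zero $x_*$. Hence the difference of the $\widetilde Z$'s is piecewise monotone with exactly one interior extremum that can be critical. Its end behaviour is fixed: it vanishes at $y=0$, and its value at the lower end is the difference of the means. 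So the whole function is nonpositive as soon as it is nonpositive at the critical extremum $y_*=\widetilde W_\alpha^{-1}(x_*)$ and at the endpoint.

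Finally, I would make this condition explicit. Using the identity $\widetilde Z(y)=Z(-y)-Z(0)$ with $Z(0)=\mathbb{E}Y$ from (\ref{23b}), the value at $y_*$ becomes a difference of $Z$'s at $-y_*$ minus a difference of means. For $\alpha=0$, $-y_*=-\log x_*$ and the means are $\sum 1/\gamma_{i:r}$, giving condition (i). For $\alpha\neq0$, $-y_*=(1-x_*^\alpha)/\alpha$ and the means are $\frac1\alpha[1-M(\cdot)/M(\alpha+\cdot)]$, giving condition (ii) after the constants $1/\alpha$ cancel.

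The main obstacle is the bookkeeping of signs and directions. Reflecting $x\mapsto -x$ reverses the order of the sign pattern, so one must check carefully whether $x_*$ (the first zero, rather than the second) corresponds to the relevant maximum of the $\widetilde Z$-difference. One must also check that the endpoint value has the right sign, possibly using Lemma~\ref{t1}/\ref{t11} together with the mean inequality implied by the hypothesis. Finiteness of the means, needed as in the remark after Proposition~\ref{prop:neg}, must also be verified.
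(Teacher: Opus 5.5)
The paper gives no separate proof of this proposition beyond the remark preceding it, and your plan reproduces that remark. The step you postpone as ``bookkeeping of signs'' is exactly where the argument breaks.

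Under the parameter conditions of Propositions~\ref{p1}--\ref{p2}, Lemma~\ref{t1} (cases (ii), (iv)) and Lemma~\ref{t11} give that $f_{r,\gamma_{1:r},\mu}-f_{q,\beta_{1:q},\nu}$ has pattern $-+-$. The $+-+$ in the remark refers to the reversed difference, as used in the proof of Proposition~\ref{p1}. Writing $D=\widetilde Z_{r,\gamma_{1:r},\mu,\alpha}-\widetilde Z_{q,\beta_{1:q},\nu,\alpha}$, you get
\[
D'(y)=(-y)\,\widetilde W_\alpha'(y)\,\bigl(f_{r,\gamma_{1:r},\mu}-f_{q,\beta_{1:q},\nu}\bigr)\bigl(\widetilde W_\alpha(y)\bigr),\qquad -y>0,
\]
so $D$ decreases, increases, and finally decreases to $D(0)=0$. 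Hence $D>0$ on a left neighbourhood of $0$, whatever its values at $y_*=\widetilde W_\alpha^{-1}(x_*)$ and at the lower endpoint. Your claim that nonpositivity at those two points gives nonpositivity everywhere is therefore false.

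The final translation step also fails. We have $\widetilde W_\alpha^{-1}(u)=(u^\alpha-1)/\alpha=-W_\alpha^{-1}(1-u)$ (and $\ln u=-W_0^{-1}(1-u)$), so $-\widetilde W_\alpha^{-1}(U)\stackrel{d}{=}W_\alpha^{-1}(1-U)$, not $W_\alpha^{-1}(U)$. Thus the identity $\widetilde Z(y)=Z(-y)-Z(0)$, with $Z(0)$ from (\ref{23b}), holds only if $U\stackrel{d}{=}1-U$. The mean of $\widetilde W_\alpha^{-1}(U_{r,\gamma_{1:r},\mu})$ is the series (\ref{eq:expectation}) or (\ref{eq:expectation0}), not (\ref{23b}).

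No amount of bookkeeping will rescue this, because under the stated hypotheses the conclusion is false. Take $b<a\le y_*$, with $b$ in the support of $\widetilde W_\alpha$. The function $h(t)=\max\{b,\min\{t,a\}\}$ is nondecreasing and star-shaped on $(-\infty,0]$: $h(0)=a\le 0$ and $h(t)/t$ is nondecreasing for $t<0$. With $\widetilde Y_r=\widetilde W_\alpha^{-1}(U_{r,\gamma_{1:r},\mu})$ and $\widetilde Y_q=\widetilde W_\alpha^{-1}(U_{q,\beta_{1:q},\nu})$,
\[
\mathbb{E}h(\widetilde Y_r)-\mathbb{E}h(\widetilde Y_q)=\int_b^a\Bigl[F_{q,\beta_{1:q},\nu}\bigl(\widetilde W_\alpha(x)\bigr)-F_{r,\gamma_{1:r},\mu}\bigl(\widetilde W_\alpha(x)\bigr)\Bigr]\,dx>0,
\]
since $f_{r,\gamma_{1:r},\mu}<f_{q,\beta_{1:q},\nu}$ on $(0,x_*)$. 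So for $F=\widetilde W_\alpha$, which trivially satisfies $F\succeq_*\widetilde W_\alpha$, the relation $X_{r,\gamma_{1:r},\mu}\preceq_{ss}X_{q,\beta_{1:q},\nu}$ never holds.

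A concrete instance: take $r=2$, $q=1$, $\mu=\nu=0$, $\gamma_{1:2}=2$, $\beta_{1:1}=1$, $\alpha=0$. Then $x_*\approx 0.3005$. At $s=-\ln x_*$ the left side of (i) is $e^{-2s}(2s^2+2s+1)-(s+1)e^{-s}\approx -0.093$ and the right side is $0$, so (i) holds. Yet $\mathbb{E}\ln U_{2,2,0}=-\sum_{\ell\ge 1}\frac{4}{\ell(\ell+2)^2}=\frac{\pi^2}{3}-4\approx-0.71>-1=\mathbb{E}\ln U_{1,1,0}$. So already $h(t)=t$ violates the claimed order.

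What your sign analysis does support is the reversed statement. On the negative half-line, $\widetilde Z_{q,\beta_{1:q},\nu,\alpha}\le\widetilde Z_{r,\gamma_{1:r},\mu,\alpha}$ everywhere is sufficient for $\widetilde Y_q\preceq_{ss}\widetilde Y_r$, and by the shape of $D$ this holds if and only if $D(y_*)\ge 0$. With the $\widetilde Z$'s computed from the law of $W_\alpha^{-1}(1-U)$, the condition $D(y_*)\ge 0$ together with $F\succeq_*\widetilde W_\alpha$ gives $X_{q,\beta_{1:q},\nu}\preceq_{ss}X_{r,\gamma_{1:r},\mu}$.
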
}

\section{Order statistics and $k$-th record values}
\label{sec:records}

Below, we specify the results of the previous section  to the particular models of order and record statistics.
We first consider order statistics $X_{i:n}$ and $X_{j:m}$  from two independent i.i.d.\ samples of sizes $n$ and $m$, respectively,
with a common parent distribution function.
In this case, we particularly take on $r=i$, $q=j$, $\gamma_{1:r}=n+1-i$, $\beta_{1:q}= m+1-j$ and $\mu=\nu=1$.
From Corollary~\ref{c11}, we conclude that $X_{i:n} \preceq_{st}X_{j:m}$ if $i<j$ and $n-i>m-j$.
Below, we state the translations of Propositions \ref{c2a}, \ref{c3a}, and \ref{p2}(b) into the order statistcs framework.
\begin{corollary}\label{coos1}
Let $i>j$ and $n-i >m-j$. Under either of the following conditions
    \begin{enumerate}[(a)]
    \item
    $F$ is IFR and
    \begin{equation}
    \label{26}
    \sum_{k=1}^i \frac{1}{n-i+k} \leq \sum_{k=1}^j \frac{1}{m-j+k},
    \end{equation}

    \item
    $F$ is $\alpha$-IGFR for some $\alpha >0$ and
    \begin{equation}
    \label{27}
    \prod_{k=1}^i \frac{n-i+k}{\alpha+n-i+k} \geq
    \prod_{k=1}^j \frac{m-j+k}{\alpha+m-j+k},
    \end{equation}

    \item
    $F$ is $\alpha$-IGFR for some $-m-1+j < \alpha<0$ and
    \begin{equation}
    \label{28}
    \prod_{k=1}^i \frac{n-i+k}{\alpha+n-i+k} \leq
    \prod_{k=1}^j \frac{m-j+k}{\alpha+m-j+k},
    \end{equation}
    \end{enumerate}
the relation $X_{i:n} \preceq_{icv} X_{j:m}$ holds.
\end{corollary}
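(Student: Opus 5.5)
The plan is to specialize Proposition~\ref{c2a} to the order-statistics submodel, with $r=i$, $q=j$, $\mu=1$, $\gamma_{1:r}=n+1-i$ and $\beta_{1:q}=m+1-j$. The hypotheses $i>j$ and $n-i>m-j$ translate exactly into $r>q$ and $\gamma_{1:r}>\beta_{1:q}$, which are the standing parameter conditions of Proposition~\ref{c2a}. By Lemma~\ref{t11}, the density difference of the uniform order statistics then has sign variation $-+-$. Equivalently, the distribution function difference is first negative and then positive, and this is the property Proposition~\ref{c2a} relies on.

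Next I will rewrite the moment conditions (\ref{20a})--(\ref{22a}) in terms of $n,m,i,j$. Because $\mu=1$, the parameters are $\gamma_{k:r}=n-i+k$ for $k=1,\dots,i$. Hence
\[
\sum_{k=1}^{i}\frac{1}{\gamma_{1:r}+(k-1)}=\sum_{k=1}^{i}\frac{1}{n-i+k},
\]
and likewise for the $j$-th statistic of the sample of size $m$. This turns (\ref{20a}) into (\ref{26}). For the products we have
\[
\frac{M(r,\gamma_{1:r},1)}{M(r,\alpha+\gamma_{1:r},1)}=\prod_{k=1}^{i}\frac{n-i+k}{\alpha+n-i+k}.
\]
This turns (\ref{21a}) and (\ref{22a}) into (\ref{27}) and (\ref{28}), respectively. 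In each case the relevant inequality is exactly $\mathbb{E}W_\alpha^{-1}(U_{i:n})\le \mathbb{E}W_\alpha^{-1}(U_{j:m})$, by formula (\ref{23b}).

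The only point requiring care is the admissible range of $\alpha$ in case (c). Proposition~\ref{c2a}(iii) requires $-\beta_{1:q}<\alpha<0$, and here $\beta_{1:q}=m+1-j$, so the condition reads $-(m+1-j)<\alpha<0$, which is the stated range. I will also check that this range is enough for both expectations to be finite. Since $\gamma_{1:r}=n+1-i>m+1-j=\beta_{1:q}$, we have $\alpha>-\beta_{1:q}>-\gamma_{1:r}$, so (\ref{23b}) is valid for both order statistics. This is the main thing to verify; the rest is substitution.

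Combining the pieces, Lemma~\ref{l0} gives $W_\alpha^{-1}(U_{i:n})\preceq_{icv}W_\alpha^{-1}(U_{j:m})$. The family of nondecreasing concave functions is closed under composition, and the convex transform order $F\preceq_c W_\alpha$ says that $F^{-1}\circ W_\alpha$ is nondecreasing and concave. Theorem~\ref{t0} therefore transfers the ordering to the baseline $F$, which yields $X_{i:n}\preceq_{icv}X_{j:m}$.
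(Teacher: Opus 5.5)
You follow the paper's own route: the corollary is presented as a direct translation of Proposition~\ref{c2a}. Your bookkeeping is correct. The hypotheses $r>q$ and $\gamma_{1:r}>\beta_{1:q}$ match, (\ref{20a})--(\ref{22a}) become (\ref{26})--(\ref{28}), each of these says $\mathbb{E}W_\alpha^{-1}(U_{i:n})\le\mathbb{E}W_\alpha^{-1}(U_{j:m})$, and $\alpha>-(m+1-j)$ keeps both means finite.

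The step that fails is the appeal to Lemma~\ref{l0}. You correctly record that $F_{U_{i:n}}-F_{U_{j:m}}$ is first negative, then positive. Put $X=W_\alpha^{-1}(U_{i:n})$ and $Y=W_\alpha^{-1}(U_{j:m})$, with distribution functions $F_X$ and $F_Y$. Then $F_Y-F_X$ is first positive, then negative. In Lemma~\ref{l0} this is the icx hypothesis, so together with $\mathbb{E}X\le\mathbb{E}Y$ it yields $X\preceq_{icx}Y$, not $X\preceq_{icv}Y$. The icv conclusion needs the opposite crossing, $F_X-F_Y$ first positive, then negative. The paper's proof of Proposition~\ref{c2}, which Proposition~\ref{c2a} imitates, makes the same slip, so specializing it inherits the error.

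No other argument can rescue the statement. Taking $h(x)=\min(x,t)$, the relation $X\preceq_{icv}Y$ forces $\int_{-\infty}^t P(X\le x)\,dx\ge\int_{-\infty}^t P(Y\le x)\,dx$ for every $t$. As $F(x)\to0$ we have $P(X_{i:n}\le x)\sim\binom{n}{i}F(x)^i$ and $P(X_{j:m}\le x)\sim\binom{m}{j}F(x)^j$. Since $i>j$, the first is negligible against the second. So the inequality fails for $t$ near the left end of the support, for any continuous baseline $F$ with finite mean and regardless of the moment condition.

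A concrete instance: take $F$ standard exponential (which is IFR), $n=12$, $i=2$, $m=5$, $j=1$. Then $n-i=10>4=m-j$ and $1/11+1/12=23/132\le 1/5$, so case (a) applies. Yet $\mathbb{E}\min(X_{2:12},0.01)\approx 0.00998>0.00975\approx\mathbb{E}\min(X_{1:5},0.01)$.

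Once the crossing is read correctly, your chain proves the following instead. Under $F\succeq_c W_\alpha$ (DFR or $\alpha$-DGFR, so that $F^{-1}\circ W_\alpha$ is increasing convex) together with (\ref{26}), (\ref{27}) or (\ref{28}), you get $X_{i:n}\preceq_{icx}X_{j:m}$. Alternatively, under $F\preceq_c W_\alpha$ with the reversed moment inequality, you get $X_{j:m}\preceq_{icv}X_{i:n}$.
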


{\samepage
\begin{corollary}\label{coos2}
Let $i<j$ and $n-i >m-j$, and either of the following conditions holds:
    \begin{enumerate}[(a)]
    \item $F$ is DFR and (\ref{26}),
    \item $F$ is $\alpha$-DGFR for some $\alpha >0$ and (\ref{27}),
    \item $F$ is $\alpha$-DGFR for some $-m-1+j <\alpha <0$ and (\ref{28}).
    \end{enumerate}
Then $X_{i:n} \preceq_{icx} X_{j:m}$.
\end{corollary}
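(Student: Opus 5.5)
The plan is to read the corollary as the order-statistics instance of the $m$-\gos\ results of Section~\ref{sec:m-gos}. Order statistics $X_{i:n}$ and $X_{j:m}$ are $m$-\gos\ with $r=i$, $q=j$ and common difference $\mu=\nu=1$. Their minimal parameters are $\gamma_{1:r}=n+1-i$ and $\beta_{1:q}=m+1-j$, so the ordered parameters are $\gamma_{k:r}=n-i+k$ for $k=1,\ldots,i$ and $\beta_{k:q}=m-j+k$ for $k=1,\ldots,j$. Under this dictionary the hypotheses translate as follows:
\begin{itemize}
\item $n-i>m-j$ is exactly $\gamma_{1:r}>\beta_{1:q}$, and $i<j$ is $r<q$;
\item (\ref{26}) is (\ref{20a}) with $\mu=1$;
\item (\ref{27}) and (\ref{28}) are (\ref{21a}) and (\ref{22a}), because $M(i,n+1-i,1)/M(i,\alpha+n+1-i,1)=\prod_{k=1}^i\frac{n-i+k}{\alpha+n-i+k}$, and similarly for $j,m$;
\item the range $-m-1+j<\alpha$ is $-\beta_{1:q}<\alpha$, which by (\ref{23b}) guarantees finite means of the generalized Pareto order statistics.
\end{itemize}

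The quickest route does not need the shape conditions at all. Since $r<q$, $\gamma_{1:r}>\beta_{1:q}$ and the common differences agree, Corollary~\ref{c11} gives $X_{i:n}\preceq_{st}X_{j:m}$ for every baseline $F$. This is the remark made just before Corollary~\ref{coos1}. Every nondecreasing convex function is nondecreasing, so the usual stochastic order implies the increasing convex order. Hence $X_{i:n}\preceq_{icx}X_{j:m}$, and this holds under each of (a), (b), (c) and even without them.

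To show that the conclusion also fits the intended machinery of Proposition~\ref{c3a}, I would run the parallel argument through Lemma~\ref{t11}, Lemma~\ref{l0} and Theorem~\ref{t0}.
\begin{enumerate}
\item Since $r<q$, Lemma~\ref{t11} gives the density difference $f_{r,\gamma_{1:r},1}-f_{q,\beta_{1:q},1}$ sign variation $+-$ on $(0,1)$.
\item Hence the difference of the uniform distribution functions is positive on $(0,1)$. Applying the increasing map $W_\alpha^{-1}$ keeps it nonnegative, so the hypothesis of Lemma~\ref{l0} holds in its degenerate one-sign form for $Y=W_\alpha^{-1}(U)$.
\item The means of these $Y$'s are given by (\ref{23b}). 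The required inequality $\mathbb{E}Y_{r}\le\mathbb{E}Y_{q}$ is equivalent to (\ref{26}), (\ref{27}) or (\ref{28}) according to the sign of $\alpha$, so Lemma~\ref{l0} gives the icx comparison of the $Y$'s.
\item The DFR or $\alpha$-DGFR assumption says $F\succeq_c W_\alpha$, that is, $F^{-1}\circ W_\alpha$ is nondecreasing convex. The class of nondecreasing convex functions is closed under composition, so Theorem~\ref{t0} transfers the comparison to baseline $F$.
\end{enumerate}

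The only point needing care is the bookkeeping that matches the orientation of the parameter conditions with those of Propositions~\ref{c3a} and~\ref{c2a}. Proposition~\ref{c3a} is stated with $\gamma_{1:r}<\beta_{1:q}$, whereas here $\gamma_{1:r}>\beta_{1:q}$. That discrepancy is precisely why the stochastic-order route through Corollary~\ref{c11} is the cleanest, and I would present it as the main proof.
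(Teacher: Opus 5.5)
Your proof is correct, and your main argument takes a different route from the paper's. The paper obtains Corollary~\ref{coos2} as the order-statistics translation of Proposition~\ref{c3a}. That means going through the sign-variation Lemma~\ref{t11}, feeding the mean comparison from (\ref{23b}) into Lemma~\ref{l0}, and transferring with Theorem~\ref{t0} under $F\succeq_c W_\alpha$; this is essentially your ``parallel argument''.

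You instead observe that $i<j$ and $n-i>m-j$ already give $X_{i:n}\preceq_{st}X_{j:m}$ for every parent $F$, by Corollary~\ref{c11}. The paper itself records this just before Corollary~\ref{coos1}. The usual stochastic order then implies $\preceq_{icx}$.

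Your route shows that the shape assumptions in (a)--(c) are superfluous. Even (\ref{26})--(\ref{28}) hold automatically, since they are exactly the mean inequalities for the generalized Pareto-baseline order statistics, which the usual stochastic order already forces. It also exposes that the paper's route does not literally apply here. Proposition~\ref{c3a} assumes $\gamma_{1:r}<\beta_{1:q}$, i.e.\ $n-i<m-j$, which is the opposite of the printed hypothesis. The crossing-density machinery only has content in that regime, where the usual stochastic order fails.

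Be careful if you try to carry your parallel argument into that regime. There Lemma~\ref{t11}, with the roles swapped, makes the density difference of $X_{i:n}$ and $X_{j:m}$ follow the pattern $+-+$, so $F_{i:n}-F_{j:m}$ is first positive and then negative. Lemma~\ref{l0} then delivers $\preceq_{icv}$, not $\preceq_{icx}$, and that transfers under IFR rather than DFR.

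A concrete example confirms this. Take the exponential parent, which is DFR, with $X_{1:2}$ and $X_{3:5}$. They satisfy (\ref{26}), since $1/2\le 1/3+1/4+1/5$. Yet $X_{1:2}\not\preceq_{icx}X_{3:5}$, because $P(X_{1:2}>x)=e^{-2x}$ has a heavier tail than $P(X_{3:5}>x)\sim 10e^{-3x}$.
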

}

{\samepage
\begin{corollary}\label{coos3}
Assume $i>j$ and $n-i > m-j$. Let $x_*$ denote the first zero of the difference $f_{i,n+1-i,1} - f_{j,m+1-j,1}$ in $(0,1)$ (cf.\ (\ref{2a}) and (\ref{2b})),
and either of the following assumptions is valid
\begin{enumerate}[(a)]
\item
$F$ is DFRA, and
\begin{equation}
\label{29}
Z_{i,n+1-i,1,0} \left( - \ln(1-x_*)\right)
-
Z_{j,m+1-j,1,0} \left( - \ln(1-x_*)\right)
\leq 0,
\end{equation}

\item
$F$ is $\alpha$-DGFRA for some $\alpha >0$, and
\begin{equation}
\label{30}
Z_{i,n+1-i,1,\alpha} \left(  \frac{1-(1-x_*)^\alpha}{\alpha}\right)
-
Z_{j,m+1-j,1,\alpha} \left(  \frac{1-(1-x_*)^\alpha}{\alpha}\right)
\leq 0,
\end{equation}

\item
$F$ is $\alpha$-DGFRA for some $-m-1+j < \alpha < 0$ and (\ref{30}). 
\end{enumerate}
Then $X_{i:n} \preceq_{ss} X_{j:m}$ holds.
\end{corollary}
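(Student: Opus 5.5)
The plan is to obtain Corollary~\ref{coos3} by specializing Proposition~\ref{p2}(b) (the case $\mu>0$) to ordinary order statistics. First I fix the dictionary: $X_{i:n}$ is the $i$-th $m$-\gos\ with $r=i$, minimal parameter $\gamma_{1:r}=n+1-i$ and common difference $\mu=1$. Indeed, the parameters $\gamma_k=n+1-k$, $k=1,\ldots,i$, rearranged increasingly, form the arithmetic sequence $n+1-i,\,n+2-i,\ldots,n$. Likewise $X_{j:m}$ corresponds to $q=j$, $\beta_{1:q}=m+1-j$ and $\mu=1$. The hypotheses $i>j$ and $n-i>m-j$ translate exactly into $r>q$ and $\gamma_{1:r}=n+1-i>m+1-j=\beta_{1:q}$, which are the standing assumptions of Proposition~\ref{p2}. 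Since $\mu=1>0$, we are in part~(b) of that proposition.

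Next I check the sign structure and the definition of $x_*$. By Lemma~\ref{t11} with $r>q$, the difference $f_{i,n+1-i,1}-f_{j,m+1-j,1}$ has sign variation $-+-$ on $(0,1)$. Consequently the derivative of $Z_{i,n+1-i,1,\alpha}-Z_{j,m+1-j,1,\alpha}$, computed after the transformation $W_\alpha^{-1}$ as in the proof of Proposition~\ref{p1}, has sign variation $+-+$. The first zero $x_*$ of the density difference therefore corresponds to the point $W_\alpha^{-1}(x_*)$, which equals $-\ln(1-x_*)$ for $\alpha=0$ and $\frac{1-(1-x_*)^\alpha}{\alpha}$ for $\alpha\neq0$, where the $Z$-difference attains its interior local maximum. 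It follows that conditions (\ref{29}) and (\ref{30}) are literally the hypotheses of Proposition~\ref{p2}(b)(i) and (b)(ii), respectively.

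Then I treat the three cases. In case (a), DFRA means $F\succeq_* W_0$, so Proposition~\ref{p2}(b)(i) applies with (\ref{29}). In cases (b) and (c), $\alpha$-DGFRA means $F\succeq_* W_\alpha$, so Proposition~\ref{p2}(b)(ii) applies with (\ref{30}). That proposition requires $\alpha>-\beta_{1:q}=-(m+1-j)$, which is exactly the lower bound $-m-1+j<\alpha$ imposed in (c), and which is automatic for $\alpha>0$ in (b). This requirement guarantees finite expectations of the $W_\alpha$-based order statistics, which Lemma~\ref{l2} needs. In each case the conclusion reads $X_{i:n}\preceq_{ss}X_{j:m}$.

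The only delicate point is a notational one. The conclusion in Proposition~\ref{p2}(b) is misprinted with subscript $0$; the proof of Proposition~\ref{p1} (via Lemma~\ref{l2} and Theorem~\ref{t0}, using closure of star-shaped functions under composition) yields the ordering for common difference $\mu$. With that reading the result for $\mu=1$ follows directly. I therefore expect no genuine obstacle beyond verifying this parameter translation.
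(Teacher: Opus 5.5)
Your proposal is correct and follows the paper's route: the corollary is Proposition~\ref{p2}(b) specialized to $r=i$, $q=j$, $\gamma_{1:r}=n+1-i$, $\beta_{1:q}=m+1-j$, $\mu=\nu=1$, with $i>j$, $n-i>m-j$ giving $r>q$, $\gamma_{1:r}>\beta_{1:q}$, and $\alpha>-\beta_{1:q}$ giving the bound in (c). Your reading of the subscript $0$ in the conclusion of Proposition~\ref{p2}(b) as a misprint is sound, as is your use of the $-+-$ sign pattern from Lemma~\ref{t11} to place the local maximum of the $Z$-difference at $W_\alpha^{-1}(x_*)$.
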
}
Note that the function $f_{i,n+1-i,1}$ is the density function of $i$-th order statistics form the standard uniform i.i.d.\ sample of size $n$ and then (\ref{2a}) with (\ref{2b}) can be simplified to
\begin{equation*}
\label{31}
f_{i,n+1-i,1}(x) = f_{i:n}(x) = n {n-1 \choose i-1} x^{i-1} (1-x)^{n-i}, \qquad 0<x<1,
\end{equation*}
and $f_{j,m+1-j,1}$ has a similar representation.
The above results for $\alpha=-1$, $0$, and $1$ were earlier determined by \cite{ALO-JAP}.

For comparison of record values $R_n^{(k)}$ and $R_m^{(j)}$ from i.i.d.\ sequences with the same marginal distribution, we set $r=n$, $q=m$, $\gamma_{1:r}=k$, $\beta_{1:q}=j$, and $\mu=\nu =0$.
Corollary \ref{c11} asserts that $R_n^{(k)} \preceq_{st}R_m^{(j)}$ if $n<m$ and $k>j$, which is obvious in view of the definitions of  $k$-th record values.
Moreover, we have the following.
\begin{corollary}
\label{cor1}
Assume $n>m$ and $k>j$. If either of the conditions
    \begin{enumerate}[(a)]
    \item
    $F$ is IFR and
    \begin{equation}
    \frac{n}{k} \leq \frac{m}{j},
    \label{32}
    \end{equation}

    \item
    $F$ is $\alpha$-IGFR for some $\alpha >0$ and
    \begin{equation}
    \label{33}
    \left( \frac{k}{\alpha+k} \right)^n \geq \left( \frac{j}{\alpha+j} \right)^m
    \end{equation}

    \item
    $F$ is $\alpha$-IGFR for some $-j < \alpha <0$ and
    \begin{equation}
    \label{34}
    \left( \frac{k}{\alpha+k} \right)^n \leq \left( \frac{j}{\alpha+j} \right)^m
    \end{equation}
    \end{enumerate}
holds, then $R_n^{(k)} \preceq_{icv} R_m^{(j)}$.
\end{corollary}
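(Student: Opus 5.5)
Corollary~\ref{cor1} is the specialization of Proposition~\ref{c2a} to the $k$-th record model, so the plan is to check its hypotheses and translate conditions (\ref{20a})--(\ref{22a}) into (\ref{32})--(\ref{34}).

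First we identify the parameters. As stated before the corollary, $R_n^{(k)}$ and $R_m^{(j)}$ are $m$-\gos\ with $r=n$, $q=m$, $\gamma_{1:r}=k$, $\beta_{1:q}=j$ and common difference $\mu=\nu=0$, with the same baseline $F$. The assumptions $n>m$ and $k>j$ are exactly $r>q$ and $\gamma_{1:r}>\beta_{1:q}$, as Proposition~\ref{c2a} requires. Through Lemma~\ref{t11} (case $\mu=0$, $r>q$), these assumptions give the sign variation $-+-$ of $f_{n,k,0}-f_{m,j,0}$. Hence the distribution function difference is $-+$, and Lemma~\ref{l0} applies once the means of the generalized Pareto images are ordered.

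Next we evaluate the relevant quantities for $\mu=0$. Here $M(r,\gamma,0)=\gamma^r$, so
\[
\sum_{i=1}^n\frac1{k}=\frac nk, \qquad \frac{M(n,k,0)}{M(n,\alpha+k,0)}=\Bigl(\frac{k}{\alpha+k}\Bigr)^n ,
\]
and similarly with $m$ and $j$. Substituting into (\ref{20a}), (\ref{21a}) and (\ref{22a}) gives (\ref{32}), (\ref{33}) and (\ref{34}) respectively. By (\ref{23b}), each of these is the inequality $\mathbb{E}Y_{n,k,0,\alpha}\le \mathbb{E}Y_{m,j,0,\alpha}$ for the $W_\alpha$-baseline records.

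The only point requiring care is case (c), namely finiteness of expectations for $\alpha<0$. Proposition~\ref{c2a}(iii) requires $-\beta_{1:q}<\alpha<0$, which here reads $-j<\alpha<0$. Since $k>j$, we also have $\alpha>-k$, so both expectations in (\ref{23b}) are finite, each factor $\gamma/(\alpha+\gamma)$ being well defined and positive. With these checks in place, Proposition~\ref{c2a} yields $R_n^{(k)}\preceq_{icv}R_m^{(j)}$ in each case (a)--(c). Equivalently, Lemma~\ref{l0} gives the icv order for $W_\alpha$ baselines, and Theorem~\ref{t0} transfers it to $F\preceq_c W_\alpha$, since concave nondecreasing functions are closed under composition.
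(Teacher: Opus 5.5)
Your route is the paper's: the corollary is the record case $r=n$, $q=m$, $\gamma_{1:r}=k$, $\beta_{1:q}=j$, $\mu=\nu=0$ of Proposition~\ref{c2a}. Your translation of (\ref{20a})--(\ref{22a}) into (\ref{32})--(\ref{34}), i.e.\ into $\mathbb{E}Y_{n,k,0,\alpha}\le\mathbb{E}Y_{m,j,0,\alpha}$, is correct.

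\textbf{The failing step.} The problem is the appeal to Lemma~\ref{l0}. By Lemma~\ref{t11}, $F_{R_n^{(k)}}-F_{R_m^{(j)}}$ is first negative and then positive. Read this in the notation of Lemma~\ref{l0}, taking the $n$-th record (baseline $W_\alpha$) as $X$ with distribution function $F$ and the $m$-th record as $Y$ with distribution function $G$. Then it says that $G-F$ is first positive and then negative. Together with $\mathbb{E}X\le\mathbb{E}Y$, that gives $X\preceq_{icx}Y$, not $\preceq_{icv}$. The icv clause of Lemma~\ref{l0} needs $F-G$ to be first positive and then negative, which is the opposite pattern. Citing Proposition~\ref{c2a} does not close the gap: its proof is modelled on that of Proposition~\ref{c2}, which makes exactly this misreading of Lemma~\ref{l0}.

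\textbf{The conclusion itself is false.} Since $\mathbb{E}\min(X,t)=t-\int_{-\infty}^t F_X(s)\,ds$, the relation $X\preceq_{icv}Y$ forces $\int_{-\infty}^t (F_X-F_Y)(s)\,ds\ge 0$ for all $t$. But $F_{R_n^{(k)}}<F_{R_m^{(j)}}$ just to the right of the left endpoint of the support. Indeed, near $0$ the uniform record distribution functions behave like $k^n u^n/n!$ and $j^m u^m/m!$, and $n>m$. So $R_n^{(k)}\preceq_{icv}R_m^{(j)}$ fails for every continuous life distribution once $n>m$, whatever shape condition is imposed.

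\textbf{A concrete counterexample.} Take $F=W_0$, which is IFR, and $n=k=2$, $m=j=1$, so that (\ref{32}) holds with equality. Then $R_2^{(2)}$ is gamma with shape $2$ and rate $2$, and $R_1^{(1)}$ is standard exponential. With $h(x)=\min(x,1)$,
\[
\mathbb{E}h(R_2^{(2)})=1-2e^{-2}\approx 0.729 \;>\; 0.632\approx 1-e^{-1}=\mathbb{E}h(R_1^{(1)}).
\]

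\textbf{What your argument does prove.} From the same sign pattern and the same inequalities (\ref{32})--(\ref{34}) you get $Y_{n,k,0,\alpha}\preceq_{icx}Y_{m,j,0,\alpha}$. Theorem~\ref{t0}, applied with the class of nondecreasing convex functions, transfers this to $R_n^{(k)}\preceq_{icx}R_m^{(j)}$ whenever $F^{-1}\circ W_\alpha$ is convex, i.e.\ $F\succeq_c W_\alpha$ (DFR, resp.\ $\alpha$-DGFR). For IFR/$\alpha$-IGFR baselines, the valid statement is the reversed one: $R_m^{(j)}\preceq_{icv}R_n^{(k)}$ under the reversed inequalities in (\ref{32})--(\ref{34}).
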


{\samepage
\begin{corollary}\label{cor2}
Let $n<m$ and $k<j$, and suppose that either of the following assumptions is satisfied:
    \begin{enumerate}[(a)]
    \item $F$ is DFR and (\ref{32}),
    \item $F$ is $\alpha$-DGFR for some positive $\alpha$ and (\ref{33}),
    \item $F$ is $\alpha$-DGFR for some $-k < \alpha <0$, and (\ref{34}).
    \end{enumerate}
Then $R_n^{(k)} \preceq_{icx} R_m^{(j)}$.
\end{corollary}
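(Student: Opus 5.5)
This is the record-value specialization of Proposition~\ref{c3a}, with the roles of the indices adapted. I will put $r=n$, $q=m$, $\gamma_{1:r}=k$, $\beta_{1:q}=j$ and $\mu=\nu=0$. The $r$-th uniform $m$-\gos\ then has density $\frac{k^n}{(n-1)!}(1-x)^{k-1}(-\ln(1-x))^{n-1}$, which is the uniform $n$-th $k$-th record. The hypotheses $n<m$ and $k<j$ are exactly the parameter conditions $r<q$ and $\gamma_{1:r}<\beta_{1:q}$ of Proposition~\ref{c3a}.

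Before invoking it, I will verify the sign structure directly. I will not rely only on Lemma~\ref{t11}, since that lemma was stated with $\gamma_{1:r}>\beta_{1:q}$. By (\ref{14d}) the density difference equals a positive factor times
\[
\frac{k^n}{j^m}\frac{(m-1)!}{(n-1)!}(1-x)^{k-j}-\bigl[-\ln(1-x)\bigr]^{m-n}.
\]
Since $k<j$, the first term now increases from a positive constant to $+\infty$, so the monotonicity argument of Lemma~\ref{t11} breaks down. I will substitute $u=-\ln(1-x)\in(0,\infty)$, which turns the bracket into $c\,e^{(j-k)u}-u^{m-n}$ with $c>0$. Its number of zeros needs care.

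The cleaner route is to compare $R_n^{(k)}$ with $R_m^{(j)}$ through an auxiliary record. I will check whether the distribution-function difference $F_{n,k,0}-F_{m,j,0}$ changes sign once from $+$ to $-$, which is the crossing required by Lemma~\ref{l0}. If direct counting on $c\,e^{(j-k)u}-u^{m-n}$ fails, I will use the log transform: $\phi(u)=\ln c+(j-k)u-(m-n)\ln u$ is convex in $u$, so it has at most two zeros. The density difference therefore has sign pattern $+$, $+-+$, or $+$ then $-$ then $+$. Since the difference integrates to zero, the pattern is $+-+$ or reduces to $+-$. In the exponential scale both distributions are sums of independent exponentials, namely $\Gamma(n,k)$ and $\Gamma(m,j)$, so the distribution functions have the Gamma form. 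A density pattern $+-+$ gives distribution-function difference $+-$ (positive then negative) only if the tails decay appropriately. The tail behaviour is set by the rates: $e^{-ku}$ against $e^{-ju}$ with $k<j$, so $R_n^{(k)}$ has the heavier tail. Then $F_{n,k,0}-F_{m,j,0}$ is negative near infinity and starts at $0$ increasing, which yields the single crossing $+-$ as needed.

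Once the single crossing of $G-F$ is established, with $G$ the distribution function of the larger variable, I will compute expectations with baseline $W_\alpha$ via (\ref{23b}) at $\mu=0$. For $\alpha=0$ these are $n/k$ and $m/j$. For $\alpha\neq0$ they are $\frac1\alpha[1-(k/(\alpha+k))^n]$ and the analogue with $m,j$. The requirement $\alpha>-k$ ensures finiteness, since $k<j$. With the orientation set by Proposition~\ref{c3a}, conditions (\ref{32})--(\ref{34}) then translate into the mean inequality. Lemma~\ref{l0} gives the $\preceq_{icx}$ relation for the $W_\alpha$-baseline records. Finally, Theorem~\ref{t0} with $F\succeq_c W_\alpha$, using that the convex functions are closed under composition, transfers the relation to baseline $F$.

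The main obstacle is the crossing step. It requires controlling the zeros of $c\,e^{(j-k)u}-u^{m-n}$ via convexity of $\phi$ and matching the resulting crossing direction and the inequality direction in (\ref{32})--(\ref{34}) to the hypotheses of Lemma~\ref{l0}. I will check this orientation carefully, since the paper's inequalities may need reinterpretation.
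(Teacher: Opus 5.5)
Your reduction is the one the paper uses: the corollary is Proposition~\ref{c3a} with $r=n$, $q=m$, $\gamma_{1:r}=k$, $\beta_{1:q}=j$, $\mu=\nu=0$, followed by Lemma~\ref{l0} at the $W_\alpha$ level and Theorem~\ref{t0}. Your sign analysis is also right. $\phi$ is convex with $\phi(0+)=\phi(\infty)=+\infty$, so $f_{n,k,0}-f_{m,j,0}$ is $+-+$ and $F_{n,k,0}-F_{m,j,0}$ is first positive, then negative.

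The proof breaks at ``which yields the single crossing $+-$ as needed''. In Lemma~\ref{l0} take $X=R_n^{(k)}$ with distribution function $F$ and $Y=R_m^{(j)}$ with distribution function $G$. The icx conclusion requires $G-F$ to be first positive then negative, i.e.\ $F_{n,k,0}-F_{m,j,0}$ to be $-+$. You have the opposite pattern, $F-G$ first positive then negative, so $R_n^{(k)}$ is the more dispersed variable. That is the hypothesis of the icv half of the lemma. The inequalities (\ref{32})--(\ref{34}) need no reinterpretation: they are exactly $\mathbb{E}R_n^{(k)}\le\mathbb{E}R_m^{(j)}$ for the baseline $W_\alpha$. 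The problem is the type of order, not the mean condition.

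This cannot be repaired, because the statement as printed is false. So is Proposition~\ref{c3a}, from which the paper reads it off, so the paper's implicit proof fails at the same point. Your own tail remark shows why. Since $k<j$, $1-F_{n,k,0}(v)>1-F_{m,j,0}(v)$ for all $v$ beyond the crossing point. Hence, for any continuous baseline with finite means, $\mathbb{E}(R_n^{(k)}-t)_+>\mathbb{E}(R_m^{(j)}-t)_+$ as soon as $F(t)$ lies between the crossing point and $1$, and $x\mapsto(x-t)_+$ is nondecreasing convex.

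Concretely, take $F=W_0$, which is DFR because $W_0^{-1}\circ W_0$ is the identity, and $n=k=1$, $m=j=2$, so (\ref{32}) holds with equality. Then $R_1^{(1)}$ is standard exponential and $R_2^{(2)}$ is gamma with shape $2$ and rate $2$. For every $t>0$, $\mathbb{E}(R_1^{(1)}-t)_+=e^{-t}>(1+t)e^{-2t}=\mathbb{E}(R_2^{(2)}-t)_+$.

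Once Lemma~\ref{l0} is read correctly, your argument does establish $R_n^{(k)}\preceq_{icv}R_m^{(j)}$ for the $W_\alpha$ baseline under (\ref{32})--(\ref{34}). Theorem~\ref{t0} transfers this to $F$ when $F^{-1}\circ W_\alpha$ is concave, i.e.\ $F\preceq_c W_\alpha$ ($\alpha$-IGFR). Equivalently, under $F\succeq_c W_\alpha$ with the mean inequalities reversed, one obtains $R_m^{(j)}\preceq_{icx}R_n^{(k)}$.
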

}

\begin{corollary}\label{cor3}
Suppose that $n>m$ and $k>j$, and $x_*$ is the first zero of the difference $f_{n,k,0} -f_{m,j,0}$. 
Moreover, assume one of the following conditions:
    \begin{enumerate}[(a)]
    \item
    $F$ is DFRA, 
    and
    \begin{equation}
    \label{35}
    Z_{n,k,0,0}(-\ln(1-x_*)) - Z_{m,j,0,0}(-\ln(1-x_*)) \leq 0,
    \end{equation}

    \item
    $F$ is $\alpha$-DGFRA for some $\alpha >0$, 
    and
    \begin{equation}
    \label{36}
    Z_{n,k,0,\alpha}\left(  \frac{1-(1-x_*)^\alpha}{\alpha}\right) -
    Z_{m,j,0,\alpha}\left(  \frac{1-(1-x_*)^\alpha}{\alpha}\right)
    \leq 0,
    \end{equation}

    \item $F$ is $\alpha$-DGFRA for some $-j <\alpha < 0$,  and 
    (\ref{36}). 
    \end{enumerate}
It follows that   $R_n^{(k)} \preceq_{ss} R_m^{(j)}$.
\end{corollary}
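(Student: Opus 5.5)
This corollary is the specialization of Proposition~\ref{p2}, part 1 (the case $\mu=0$), to $k$-th records. I will set $r=n$, $q=m$, $\gamma_{1:r}=k$, $\beta_{1:q}=j$ and $\mu=\nu=0$. Under these choices $X_{n,k,0}\stackrel{d}{=}R_n^{(k)}$ and $X_{m,j,0}\stackrel{d}{=}R_m^{(j)}$ for the common baseline $F$. The hypotheses $n>m$ and $k>j$ are exactly the requirements $r>q$ and $\gamma_{1:r}>\beta_{1:q}$ of Proposition~\ref{p2}.

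By Lemma~\ref{t11}, the difference $f_{n,k,0}-f_{m,j,0}$ has sign variation $-+-$ on $(0,1)$. So the first zero $x_*$ exists and is the point where the sign changes from $-$ to $+$. Equivalently, $f_{m,j,0}-f_{n,k,0}$ is $+-+$, which is the sign pattern used in the proof of Proposition~\ref{p1}. Conditions (\ref{35}) and (\ref{36}) coincide literally with the conditions of Proposition~\ref{p2} 1(i) and 1(ii) once the above parameters are inserted.

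\begin{enumerate}[(a)]
\item Case (a) follows from Proposition~\ref{p2} 1(i).
\item Case (b) follows from Proposition~\ref{p2} 1(ii) with $\alpha>0$.
\item Case (c) also follows from Proposition~\ref{p2} 1(ii). The admissible range there is $-\beta_{1:q}<\alpha<0$, which here reads $-j<\alpha<0$. This range guarantees finite expectations of the records under the $W_\alpha$ baseline, as Lemma~\ref{l2} requires.
\end{enumerate}

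To make the argument self-contained, I would recall the mechanism behind Proposition~\ref{p2}. The derivative of $Z_{n,k,0,\alpha}(y)-Z_{m,j,0,\alpha}(y)$ is a positive factor times $f_{m,j,0}(W_\alpha(y))-f_{n,k,0}(W_\alpha(y))$, whose sign pattern is $+-+$. The difference itself tends to $0$ at the right end of the support. Hence it increases to a local maximum at $W_\alpha^{-1}(x_*)$, then decreases, then increases back to $0$. It is therefore nonpositive everywhere as soon as it is nonpositive at $W_\alpha^{-1}(x_*)$, which is exactly (\ref{35}) or (\ref{36}).

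Lemma~\ref{l2} then gives the star-shaped order for records from the $W_\alpha$ baseline. Theorem~\ref{t0}, combined with $F\succeq_* W_\alpha$ and the closure of star-shaped functions under composition, transfers it to baseline $F$, giving $R_n^{(k)}\preceq_{ss}R_m^{(j)}$.

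The only delicate point is bookkeeping: checking that the range of $\alpha$ and the evaluation point $W_\alpha^{-1}(x_*)=(1-(1-x_*)^\alpha)/\alpha$ (respectively $-\ln(1-x_*)$ when $\alpha=0$) match those in Proposition~\ref{p2}. Optionally, $x_*$ can be written explicitly via the Lambert function as in Remark~\ref{rem0}, with $u=-\ln(1-x_*)$ taken as the smaller root.
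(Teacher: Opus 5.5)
Your proposal is correct and follows the paper's route: the corollary is Proposition~\ref{p2}, part~1 ($\mu=0$), with the substitution $r=n$, $q=m$, $\gamma_{1:r}=k$, $\beta_{1:q}=j$, $\mu=\nu=0$, which is exactly the specialization the paper makes for records. Your recap of the underlying mechanism matches the paper's: the $-+-$ sign variation from Lemma~\ref{t11}, the shape of the $Z$-difference with its local maximum at $W_\alpha^{-1}(x_*)$, Lemma~\ref{l2}, then Theorem~\ref{t0} with $F\succeq_* W_\alpha$. Your bookkeeping of the range $-j<\alpha<0$ and of the evaluation points is also correct.
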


\begin{remark}
With respect to $x_*$ in Corollary~\ref{cor3}, substituting in the expressions obtained in Remark~\ref{rem0}, we immediately find the more explicit representation based on Lambert $\mathcal{W}$ function:
$$
x_*=1-e^{-u_*}, \quad\text{where }u_*= -\frac{n-m}{k-j}\,\mathcal W\!\left(-\frac{k-j}{n-m}\bigg(\frac{j^m(n-1)!}{k^n(m-1)!}\bigg)^{\tfrac 1{n-m}}\right).
$$
\end{remark}

\begin{example}
Let $F$ be some DFR distribution. As the DFR class is included in the DFRA class, we may apply Corollaries~\ref{cor2} and \ref{cor3} to obtain ordering relations between different record values. As Corollary~\ref{cor3} does not provide a closed formula for identifying the parameters, we shall look at a particular case. Take ($n,k)=(10,5)$. Applying Corollary~\ref{cor2}(i), one gets $R_{10}^{(5)} \preceq_{icx} R_m^{(j)}$ whenever $m>10$, $j>5$ and $m>2j$. This identifies the shaded area in Figure~\ref{fig1}. To this region we may add the points that verify condition (i) in Corollary~\ref{cor3}, represented by the points above the line $j=\frac{m}2$, where $R_{10}^{(5)} \preceq_{ss} R_m^{(j)}$ will hold.
\begin{figure}[h]
\centering
\begin{tikzpicture}[scale=.25]
\draw[->] (-1,0) -- (20,0) coordinate[label={below:{\scriptsize $m$}}];
\draw[->] (0,-1) -- (0,15) coordinate[label={left:{\scriptsize $j$}}];
\draw (10,-.2) coordinate[label={below:{\scriptsize $10$}}] -- (10,.2);
\draw (-.2,5) coordinate[label={left:{\scriptsize $5$}}] -- (.2,5);
\draw[fill=gray!20,gray!20] (10,5) -- (21,5) -- (21,21/2) -- (10,5);
\draw (0,0) -- (21,21/2) coordinate[label={right:{\scriptsize $j=\frac{m}2$}}];
\draw[dashed] (0,0) -- (15,15) coordinate[label={right:{\scriptsize $j=m$}}];
\node at (17.5,6.6) {{\scriptsize $R_{10}^{(5)} \preceq_{icx} R_m^{(j)}$}};
\node at (1,1) {{\scriptsize $\bullet$}};
\node at (2,1) {{\scriptsize $\bullet$}};
\node at (2,2) {{\scriptsize $\bullet$}};
\node at (3,1) {{\scriptsize $\bullet$}};
\node at (3,2) {{\scriptsize $\bullet$}};
\node at (3,3) {{\scriptsize $\bullet$}};
\node at (4,2) {{\scriptsize $\bullet$}};
\node at (4,3) {{\scriptsize $\bullet$}};
\node at (4,4) {{\scriptsize $\bullet$}};
\node at (5,2) {{\scriptsize $\bullet$}};
\node at (5,3) {{\scriptsize $\bullet$}};
\node at (5,4) {{\scriptsize $\bullet$}};
\node at (6,3) {{\scriptsize $\bullet$}};
\node at (6,4) {{\scriptsize $\bullet$}};
\node at (7,4) {{\scriptsize $\bullet$}};
\node at (8,4) {{\scriptsize $\bullet$}};
\end{tikzpicture}
\caption{Stochastic ordering between $R_{10}^{(5)}$ and $R_m^{(j)}$: $(m,j)$ in the gray region verify $R_{10}^{(5)} \preceq_{icx} R_m^{(j)}$, $(m,j)$ for the remaining dots verify $R_{10}^{(5)} \preceq_{ss} R_m^{(j)}$.}
\label{fig1}
\end{figure}
\end{example}

\section{Bounds for the probability of exceeding a \gos}
In this section, we extend the results of Section 7 in \cite{ALO-JAP}. We are interested in the probability that the underlying random variable exceeds an expected \gos, that is, $P(X\geq \mathbb E X_{r,\tilde{\gamma}_r})$. In contrast to Section~\ref{sec:m-gos}, the results here concern the full model of \gos\  admitting arbitrary positive parameters.

The framework covers several practically relevant models. For example, in the case of regular order statistics, when the order statistic $X_{k:n}$ may represent a lifetime of a $k$-out-of-$n$ system with i.i.d.\ components, one can establish bounds for the probability that the lifetime of one component exceeds the expected lifetime of the system \cite{ALO-JAP}. 
The same perspective applies to progressively censored type II order statistics; under a censoring scheme determined by the vector $\mathbf R$, $X_{k:n:N}^{\mathbf R}$ represents the time of the $k$-th failure in a test that starts with $N$ items and stops after $n$ failures ($n\leq N$). In this setting, we are interested in the probability that a single item lasts longer than the expected lifetime of the $k$-th failure, according to the censoring plan $\mathbf R$. In the record value case, the problem reduces to studying the probability of exceeding the $r$-th record $R_r^{(k)}$ of order $k$, which is particularly natural when record values are used to model sequential extremes. Similar exceedance probabilities arise when $\mathbb E X_{r,\tilde{\gamma}_r}$ represents some threshold induced by some sampling plan.

Assuming that $G^{-1}\circ F$ is convex and applying Jensen's inequality we immediately find the bound
$$
G^{-1}\circ F(\mathbb E X_{r,\tilde{\gamma}_r})
= G^{-1} \circ F \,\Bigl(\mathbb{E} F^{-1} (U_{r, \tilde{\gamma}_r})\Bigr)
\leq \mathbb E G^{-1}\left(1-\prod_{i=1}^{r}U_i^{1/\gamma_{i:r}}\right),
$$
so that
$$
\mathbb{P}(X\leq \mathbb E X_{r,\tilde{\gamma}_r})
= G^{-1} \circ G \circ F \,\Bigl(\mathbb{E} F^{-1} (U_{r, \tilde{\gamma}_r})\Bigr)
\leq G\left[\mathbb E G^{-1}\left(1-\prod_{i=1}^{r}U_i^{1/\gamma_{i:r}}\right)\right].
$$
The bound above depends just on $G$, $r$, and $\tilde{\gamma}_r$,  and the baseline distribution $F$ that needs to belong to some convex-ordered family of distributions.
A similar argument applies to the case when $G^{-1}\circ F$ is concave.  The results are summarized in the following proposition.
These types of bounds have been used to obtain convexity or concavity tests in \cite{LandoMohammed2025}.
\begin{proposition}
Given some distribution function $G$, define
$$
\pi_{r,\tilde{\gamma}_r}^G=G\left[\mathbb E G^{-1}\left(1-\prod_{i=1}^{r}U_i^{1/\gamma_{i:r}}\right)\right].
$$
    \begin{enumerate}[(a)]
    \item If $G^{-1}\circ F$ is convex, $P(X\geq \mathbb E X_{r,\tilde{\gamma}_r})\geq 1-\pi_{r,\tilde{\gamma}_r}^G$.
    \item If $G^{-1}\circ F$ is concave, $P(X\geq \mathbb E X_{r,\tilde{\gamma}_r})\leq 1-\pi_{r,\tilde{\gamma}_r}^G$.
    \end{enumerate}
\end{proposition}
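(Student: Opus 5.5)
The plan is to apply Jensen's inequality to the representation (\ref{2}), exactly as in the computation displayed just before the statement. I will write $X_{r,\tilde{\gamma}_r}\stackrel{d}{=}F^{-1}(U_{r,\tilde{\gamma}_r})$, with $U_{r,\tilde{\gamma}_r}\stackrel{d}{=}1-\prod_{i=1}^r U_i^{1/\gamma_{i:r}}$ the uniform \gos\ from (\ref{1}). Set $\phi=G^{-1}\circ F$ and $V=G^{-1}(U_{r,\tilde{\gamma}_r})$. Then $\phi(F^{-1}(u))=G^{-1}(u)$ for almost every $u$, since $F\circ F^{-1}(u)=u$ on the range of a continuous $F$; I assume $F$ continuous, as is implicit in the paper's life-distribution setting. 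This gives the identity
\[
\phi\bigl(X_{r,\tilde{\gamma}_r}\bigr)\stackrel{d}{=}V .
\]

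For part (a), $\phi$ is convex and nondecreasing, so Jensen's inequality gives
\[
\phi\bigl(\mathbb{E}X_{r,\tilde{\gamma}_r}\bigr)\le \mathbb{E}\,\phi\bigl(X_{r,\tilde{\gamma}_r}\bigr)=\mathbb{E}V .
\]
Applying the nondecreasing function $G$, and using $G\circ G^{-1}\circ F=F$ (valid when $G$ is continuous), I obtain
\[
F\bigl(\mathbb{E}X_{r,\tilde{\gamma}_r}\bigr)\le G(\mathbb{E}V)=\pi^G_{r,\tilde{\gamma}_r}.
\]
Since $F$ is continuous, $P(X\ge \mathbb{E}X_{r,\tilde{\gamma}_r})=1-F(\mathbb{E}X_{r,\tilde{\gamma}_r})\ge 1-\pi^G_{r,\tilde{\gamma}_r}$, which is the claim. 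Part (b) is the same argument with $\phi$ concave, so every inequality reverses: $\phi(\mathbb{E}X_{r,\tilde{\gamma}_r})\ge\mathbb{E}V$, hence $F(\mathbb{E}X_{r,\tilde{\gamma}_r})\ge\pi^G_{r,\tilde{\gamma}_r}$, and the upper bound follows.

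The main obstacle is the technical bookkeeping rather than the idea. First, the expectations $\mathbb{E}X_{r,\tilde{\gamma}_r}$ and $\mathbb{E}V$ must be finite, and this should be stated as a standing assumption. Second, $\mathbb{E}X_{r,\tilde{\gamma}_r}$ must lie in the domain where $\phi$ is defined and convex, namely the support interval of $F$; this holds because that interval is convex and contains the mean. Third, the quantile compositions need care at atoms or flat parts. Where $G$ has flat pieces, I would replace the equality $G\circ G^{-1}\circ F=F$ by the inequality $F\le G\circ G^{-1}\circ F$, which still suffices for (a). For (b) I would verify directly that $G(G^{-1}(t))\ge t$ combined with the monotonicity of $G$ still yields the reversed inequality; if it does not, I would assume $G$ continuous and strictly increasing on its support, which holds for all the examples ($W_\alpha$, $\widetilde W_\alpha$, $L$).
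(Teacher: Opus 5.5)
Your proposal is correct and is the paper's argument: apply Jensen's inequality to $G^{-1}\circ F$ at $X_{r,\tilde{\gamma}_r}\stackrel{d}{=}F^{-1}(U_{r,\tilde{\gamma}_r})$, then apply $G$ using $G\circ G^{-1}\circ F=F$ (the paper's display writes $G^{-1}\circ G\circ F$, a typo for this). Your regularity caveats go beyond what the paper says, with one small slip: it is atoms of $G$, not flat pieces, that break $G\circ G^{-1}=\mathrm{id}$. For (b) what is needed is exactly continuity of $G$ at $\mathbb{E}G^{-1}(U_{r,\tilde{\gamma}_r})$ and of $F$ at $\mathbb{E}X_{r,\tilde{\gamma}_r}$, which your assumptions cover.
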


Common choices of $G$ yield explicit expressions for the bounds. For example, let $G=W_\alpha$. Then we have 
\[
\mathbb E\,W_\alpha^{-1}\left(1-\prod_{i=1}^{r}U_i^{1/\gamma_{i:r}}\right) =
\left\{ 
\begin{array}{ll}
\sum_{i=1}^r \frac{1}{\gamma_{i:r}}, & \alpha=0, \\[1.5ex]
\frac{1}{\alpha} \left[ 1-  
   \prod_{i=1}^r \frac{\gamma_{i:r}}{\alpha+\gamma_{i:r}}
\right], 
&  -\gamma_{i:r} < \alpha\neq 0,
\end{array}
\right.
\]
(see (\ref{23a})) and so 
\begin{equation}\label{explicit-pi-gos}
\pi^{W_\alpha}_{r,\tilde{\gamma}_r}	 = W_\alpha \left(
\mathbb E\,W_\alpha^{-1}\left(1-\prod_{i=1}^{r}U_i^{1/\gamma_{i:r}}\right) \right) =
\left\{ 
\begin{array}{ll}
1- \exp \left( -\sum_{i=1}^r \frac{1}{\gamma_{i:r}}\right), & \alpha=0,\\[1.75ex]
1-\left(
\prod_{i=1}^r \frac{\gamma_{i:r}}{\alpha+\gamma_{i:r}} 
\right)^{1/\alpha}, & -\gamma_{i:r} < \alpha \neq 0. 
\end{array}
\right.
\end{equation}
For the $k$-th record values (where $\gamma_i=k$), \eqref{explicit-pi-gos} becomes 
\[ 
\pi^{W_\alpha}_{r,k}
=
\left\{
\begin{array}{ll}
1-\exp \left(-\frac{r}{k}\right), & \alpha=0, \\[1.5ex]
1-\left(\frac{k}{k+\alpha}\right)^{r/\alpha}, & -k <\alpha \neq 0.
\end{array}
\right.
\]
The above expressions simplify further if we consider the most common special cases, for which $\alpha=-1,0$ and $1$.

The application of these bounds is straightforward. For instance, assuming that $F$ is IFR and DD, we have an upper and lower bound for $P(X\geq \mathbb E R_r)$, that is,
$$
\exp\left(-\frac{r}{k}\right)\leq \mathbb{P}(X\geq \mathbb E R_r^{(k)}) \leq \left(\frac{k}{k+1}\right)^r.
$$
When the conditions are weakened, wider bounds are derived. For example, assuming that $F$ is IOR and DRFR, we now obtain
$$
\left(\frac{k-1}{k}\right)^r\leq \mathbb{P}(X\geq \mathbb E R_r^{(k)}) \leq \exp\left[ -\sum_{i=1}^\infty \frac 1n\bigg(\frac k{k+n}\bigg)^r\right].
$$
The left bound above is trivial for $k=1$.


\end{document}